\tikzstyle{none}=[inner sep=0pt]
\tikzstyle{style0}=[shape=circle, draw=none, fill=black,inner sep=0pt,minimum size=5pt]
\tikzstyle{white}=[shape=circle, draw=none, fill=white,inner sep=0pt,minimum size=20pt]
\tikzstyle{line}=[-]
\tikzstyle{arrow}=[<-]
\tikzstyle{dashline}=[-,dashed]
\tikzstyle{dasharrow}=[<-,dashed]
\tikzstyle{colordomain}=[-,fill={rgb,255: red,191; green,191; blue,191}]
\def\bff{{\bf f}}
\def\bfg{{\bf g}}
\def\bfp{{\bf p}}
\def\bfF{{\bf F}}
\def\bfL{{\bf L}}
\def\bfv{{\bf v}}
\def\bfx{{\bf x}}
\def\bfy{{\bf y}}
\newtheorem{theorem}{Theorem}[section]
\newtheorem{lemma}[theorem]{Lemma}
\newtheorem{claim}[theorem]{Claim}
\newtheorem{proposition}[theorem]{Proposition}
\newtheorem{corollary}{Corollary}[theorem]
\newtheorem{remark}{Remark}[section]
\newtheorem{example}[theorem]{Example}
\newtheorem{definition}{Definition}[section]
\begin{document}

\title[Article Title]{A Construction of Interpolating Space Curves with Any Degree of Geometric Continuity}


\author*[1]{\fnm{Tsung-Wei} \sur{Hu}}\email{znkt492357816@gmail.com}

\author[2]{\fnm{Ming-Jun} \sur{Lai}}\email{mjlai@uga.edu}

\affil*[1]{\orgdiv{Department of Mathematics}, \orgname{University of Georgia}, \orgaddress{\street{Herty Drive}, \city{Athens}, \postcode{30602}, \state{Georgia}, \country{U.S.A}}}

\affil[2]{\orgdiv{Department of Mathematics}, \orgname{University of Georgia}, \orgaddress{\street{Herty Drive}, \city{Athens}, \postcode{30602}, \state{Georgia}, \country{U.S.A}}}


\abstract{This paper outlines a methodology for constructing a geometrically smooth interpolatory curve in $\mathbb{R}^d$ applicable to oriented and flattenable points with $d\ge 2$. The construction involves four essential components: local functions, blending functions, redistributing functions, and gluing functions. The resulting curve possesses favorable attributes, including $G^2$ geometric smoothness, locality, the absence of cusps, and no self-intersection. Moreover, the algorithm is adaptable to various scenarios, such as preserving convexity, interpolating sharp corners, and ensuring sphere preservation.
The paper substantiates the efficacy of the proposed method through the presentation of numerous numerical examples, offering a practical demonstration of its capabilities.}

\keywords{Spline, interpolation, Smooth curve, CAGD}



\maketitle

\section{Introduction}\label{sec:intro}

Curves are fundamental geometric objects with applications in various fields that require different types of curve construction and design. The study of constructing smooth interpolatory space curves has been ongoing for many decades. In this paper, we are interested in constructing space curves in $\mathbb{R}^d$ with $d\ge 3$, aiming to satisfy the following requirements:
\begin{itemize}
\item interpolation, that is, the curve $\Gamma$ passes the given designed points;
\item smoothness, that the curve must be geometrically smooth $G^r$ with $r\ge 1$, We shall present our construction of $G^2$ curves in this paper.  
\item locality, i.e., any local change of the given points only affects the curve locally. 
\item no cusp points and no self-intersection unless there is such a need; 
\item convexity control capability;
\item conic preservation, i.e. if a part of the given data points is from a sphere, the curve passing these points is also on the sphere.
\item invariant under the translation and rotation of the data points;
\item corner points are also possible. In particular, the sharpness of the corner points is controllable. 
\end{itemize}

Many approaches are available in the literature and practice to construct smooth space curves due to the active study in the last several decades. However, constructive methods satisfying all the above requirements are
few, The requirement of the high order 
smoothness, say $C^2$ is one of 
the main hurdles for interpolating curve construction. 
The high-order smoothness needs a lot of
more parameters than the construction of $C^1$ curves and hence the parameters are hard to control
while easy to produce the cusps and/or overshoots in the curve. 
It is well-known that the smoothness of the parametric 
functions for a curve does not ensure the smoothness of the graph of the curve or a geometrically continuous 
curve. 
An example will be presented in Example ~\ref{ex: r1notg1}. Typically, the distinction between the "smoothness of graph" and "smoothness of function" is made using the concept of geometric continuity (cf. \cite{D85} \cite{gregory_geometric_1989} \cite{garrity_geometric_1991}). In this paper, we define the geometric continuity of a curve by considering the regularity of its parametric functions. Please refer to Section ~\ref{sec:GeoCont} for the formal definition.

Let us first present two examples of curves
based on our construction in Figure~\ref{fig:3dC2} and Figure~\ref{fig:plane}.  We give sets data set $\{ \bfv_1, \bfv_2, \cdots, \bfv_N \}$ in $\mathbb{R}^3$ which are oriented data sets, and the data sets are flattenable to be explained in Definition~\ref{flat2D}. These figures show that our construction works nicely for various sizes of data points. In Figure~\ref{fig:3dC2}, we have 20 points while in Figure~\ref{fig:plane}, we have 2776 space points.  
More examples will be given in a later section to demonstrate that our construction can preserve the corners, preserve circle or sphere when the data points are on a circle or on a sphere.

\begin{figure}
    \centering
    \includegraphics[width=1\textwidth]{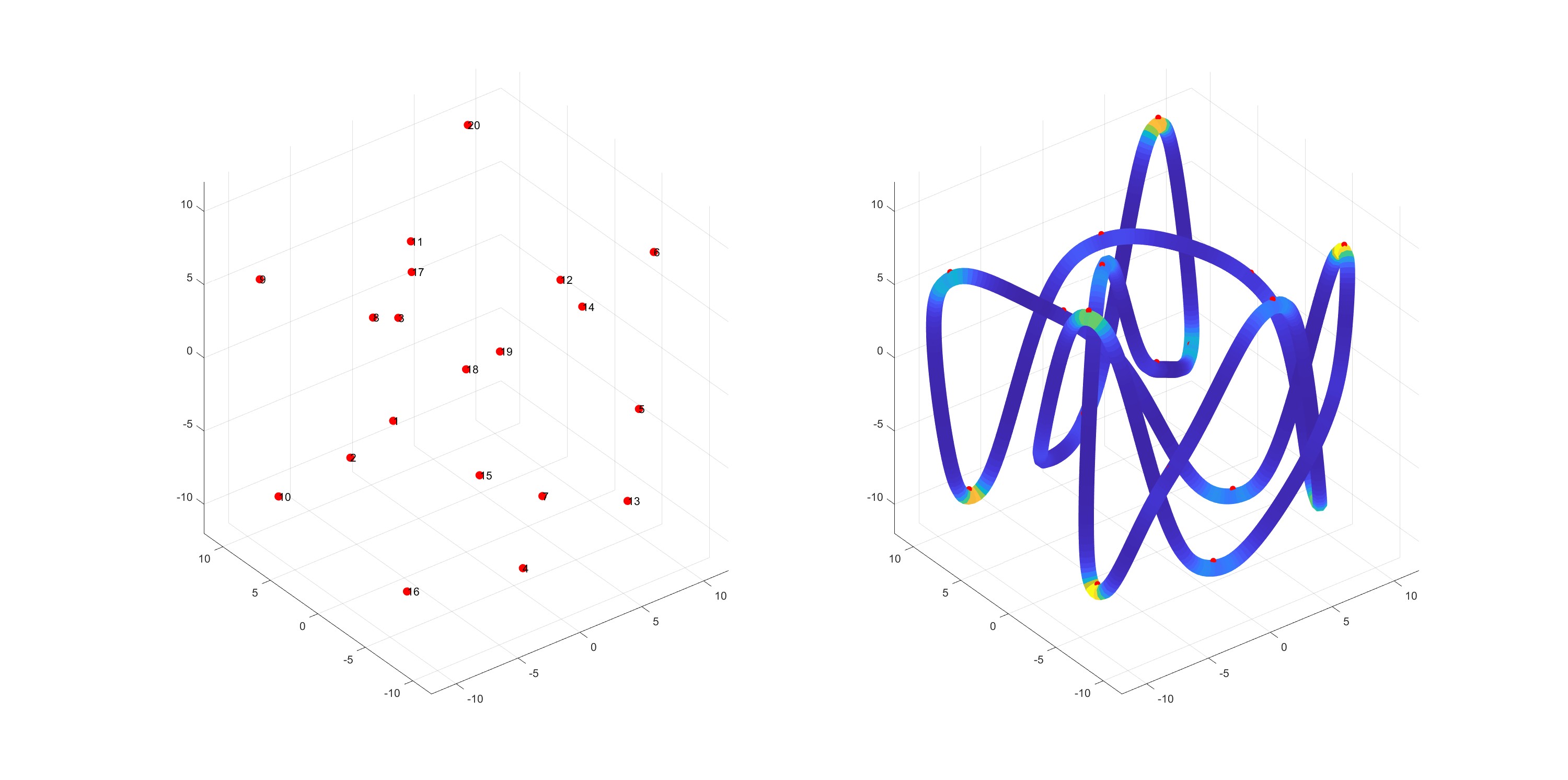}
    \caption{A $C^2$ smooth interpolating curve(right) based on a Lissajous knot (left). 
    The curvature of the curve is shown by the color on the curve. The curve change shows that the curvatures are all continuous.}
    \label{fig:3dC2}
\end{figure}

\begin{figure}
    \centering
    \includegraphics[width=1\textwidth]{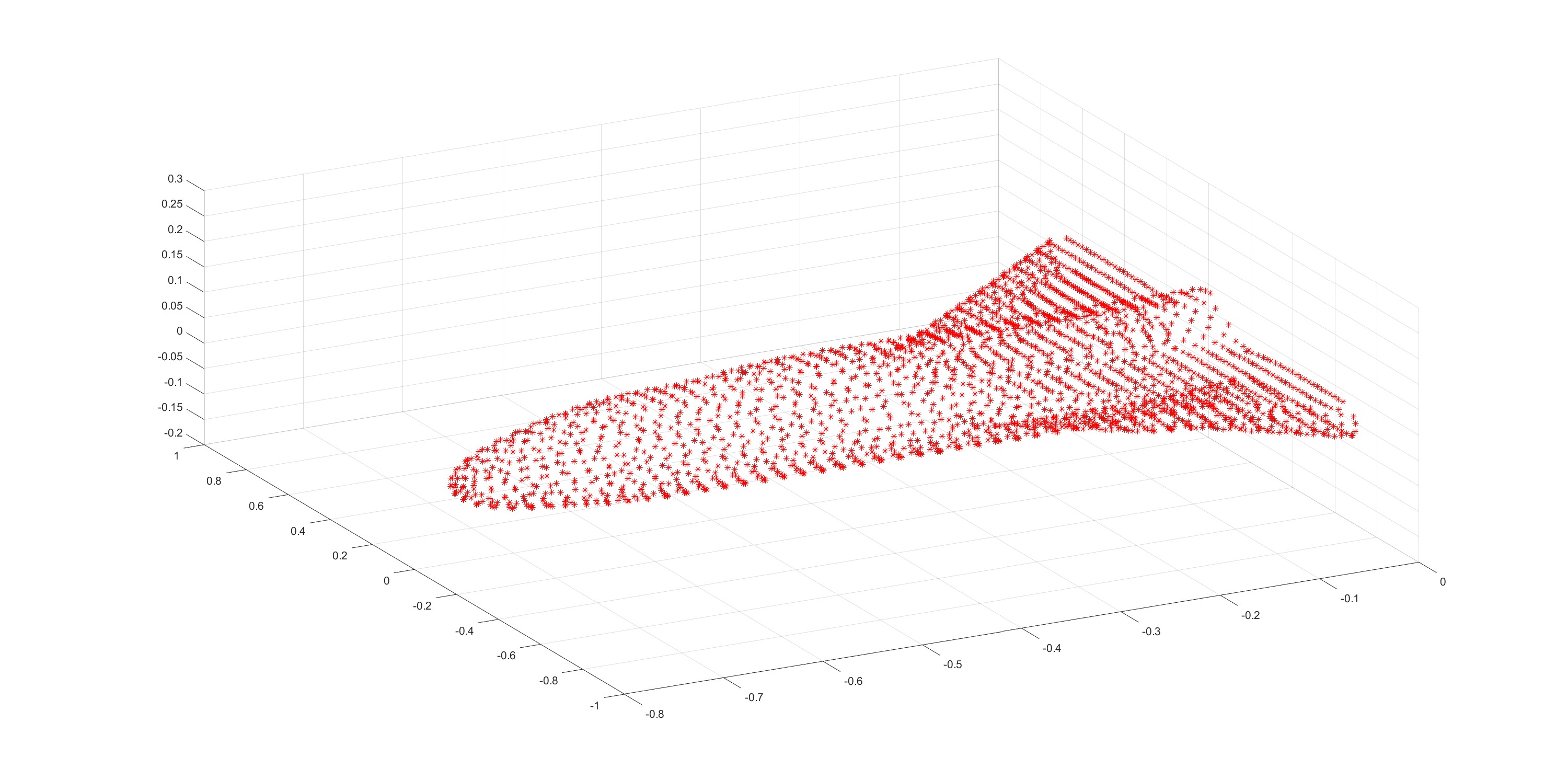}
     \includegraphics[width=1\textwidth]{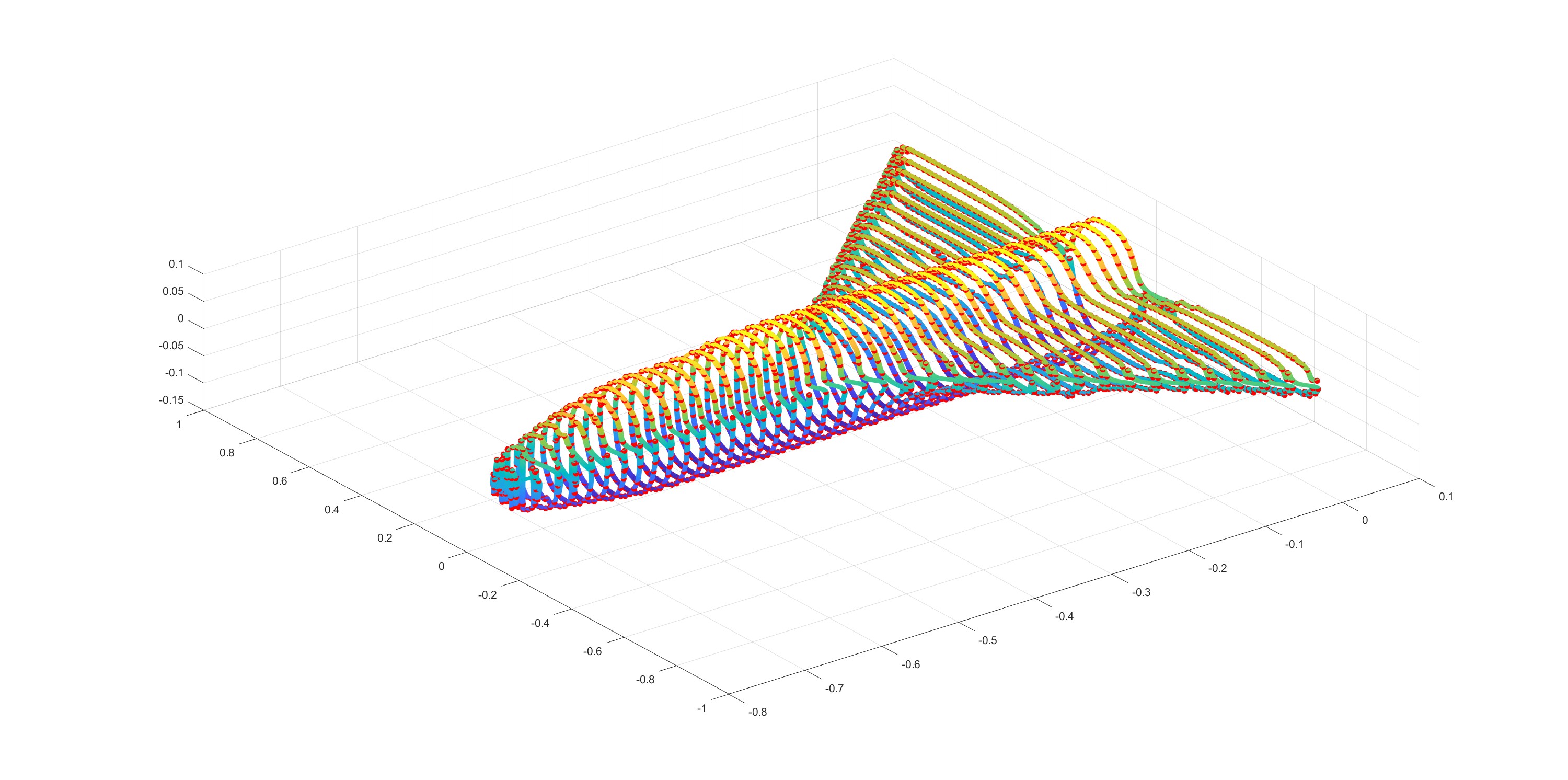}
    \caption{A $C^2$ smooth interpolating curve(bottom) based on a plane data (top). }
    \label{fig:plane}
\end{figure}

Our method of construction 
shares similar steps with previous methods in the literature. 
More precisely, our algorithm can be decomposed into four parts: local curve construction, 
redistribution, blending function construction, and gluing two pieces together. Let us quickly outline 
each of the four components as follows.  
\begin{itemize}

\item {\bf Local Curves}
 Consider the series of points $\{ \bfv_1, \bfv_2, \cdots, \bfv_N \} \subset \mathbb{R}^n$, we define a local curve $\bff_i(t)$ be a the curve pass $\bfv_{i-1}, \bfv_i, \bfv_{i+1}$, where $\bff_i(t_j)=\bfv_j$ for $j=i-1, i, i+1$. There are few canonical options for local curves: parabola, circle, or B\'ezier curves. We shall explain them in much more detail in Section ~\ref{Local_QRP}. 
\item {\bf Redistribution}
Once we have a local curve (function) $\bff_i(t)$, the reparametrization of the function will not change the graph. Therefore, we can reparametrize the curve in a conventional way. One of the convenience ways is let $t(s)$ where $t(j)=t_j$ for $j=i-1, i, i+1$, and $t(s)$ is linear on $[i-1, i]$ and $[i, i+1]$. We define ${\bf F}_i(s)= \bff(t(s)) : [i-1,i+1] \to \mathbb{R}^n$.  Note that $t(s)$ is piecewise linear, so ${\bf F}_i$ is only $C^0$, but since it parametrizes the same curve as $\bff(t)$, so ${\bf F}_i(s)$ is still has a smooth graph.
\item {\bf Blending Function}
For short, a blending function is a pair of functions $B_1, B_2: [0,1] \rightarrow [0,1]$ with $B_1(0)=B_2(1)=0$, $B_1(1)=B_2(0)=1$, and $B_1(s)+B_2(s)=1$ for all $s \in [0,1]$. One can understand it by weighted functions of endpoints $0$ and $1$. We shall impose an 
additional 
assumptions on the derivative in need for the smoothness, see details in Section~\ref{sec:blending}.
\item {\bf Gluing Two Pieces Together}
After redistribution, $\bfF_i$ and $\bfF_{i+1}$ have over lapping domain $[i, i+1]$. We can define a new function
\[ 
\Gamma(s) =
\begin{cases} 
{\bf F}_i(s) & s \in [i, i-1) \\
{\bf F}_i(s)B_1(s-i)+{\bf F}_{i+1}(t)B_2(s-i) & s \in [ i, i+1] \\
{\bf F}_{i+1}(s) & s \in (i+1,i+2] 
\end{cases}
\]
More generally, if we have a continuous function ${\bf \phi}(s,u): [i,i+1] \times [0,1] \to \mathbb{R}^n$ with ${\bf \Phi}(s,1)=\bfF_i$ and ${\bf \Phi}(s,0)=\bfF_{i+1}$, we can gluing the function as  
\[ 
\Gamma(s) =
\begin{cases} 
{\bf F}_i(s) & s \in [i, i-1) \\
{\bf \Phi}(s,B_1(s-i)) & s \in [ i, i+1] \\
{\bf F}_{i+1}(s) & s \in (i+1,i+2] 
\end{cases}
\]
This is particularly useful if we know some extra features of the original data. For instance, in the section on numerical 
experimental results, we will have two examples to show when the point data are on the sphere, we can make sure our interpolating curve 
is on the same sphere as well by choosing a suitable $\bf \Phi$.\\
\end{itemize}

\subsection{Related works}

The realm of curve construction is vast, and a complete overview is not within the scope of this paper. Here, we present a selection of essential works that are more related to our work. Farin ~\cite{FARIN1993} provides a thorough organization of curve interpolation methods using $B$-splines in his book. He also discusses B\'ezier curves, a group of smooth curves based on polynomials. Two primary challenges associated with B\'ezier curves are the need for a local shape control algorithm adaptable to various cases and the difficulty in achieving a higher order of continuity due to the flexibility in choosing control points. 
To achieve a local shape control, various methods are employed to interpolate curves. One example is the Non-uniform Rational B-spline (NURB) ~\cite{PT97}, which utilizes a rational polynomial as a substitute for the conventional polynomial. This substitution provides a broader range of shape options with a reduced number of control points. Addressing the specific need for circle interpolation, a blending technique of the circular functions was introduced ~\cite{WENZ1996}. Notably, Yuksel~\cite{Yuksel2020} made significant advancements by replacing the local circular function with an ellipse function when the given data points deviate from a circle. Another noteworthy approach involves the use of clothoids to interpolate curves, offering a method for achieving curves with uniform curvature changes~\cite{BERTOLAZZI2018}. While these algorithms enhance curve fitting in diverse scenarios, it's important to note that they may introduce increased curve complexity, and the highest achievable smoothness of these algorithms are limited to $G^2$.\\
 Some approaches center around polynomials and aims to enhance smoothness. One approach is to interpolate the curve independently along different axes. Let $\bfv_i=(x_i, y_i, z_i)$, one can interpolate $x_i$, $y_i$, and $z_i$ by $X(t)$, $Y(t)$, and $Z(t)$, where $( X(t_i), Y(t_i), Z(t_i))= (x_i, y_i, z_i)$ for $t_1 < t_2 < \cdots < t_i < \cdots <t_N$. Further details are expounded in the book by Knott ~\cite{Knott2000}. Subsequent to this, Lee refined the selection of $t_i$ \cite{LEE1989363} and made the curve more appealing. Key challenges associated with interpolating along different axes include the absence of locality and a deficiency in rotational stability.\\
 
Our algorithm is grounded in the notion of blending functions. By introducing the $r$-blending function, one can craft an interpolation function with a desired shape and achieve any desired level of smoothness. We redefine geometric continuity in an alternative manner and provide a robust methodology for generating curves without cusps or self-intersections.

\subsection{Organization of the paper}
The paper is organized as follows. We will 
explain the concept of the oriented data point
set and points that are flattenable. Then, we explain the concept of geometric 
continuity and the geometrical smoothness of 
the graph of a curve. These will be done 
in Section~\ref{sec:Prelim}. we explain our constructive
steps with emphasis on the four components that we explained in the introduction. We will give a very general definition of these components and provide a detailed justification of how smooth these curves are. We also use the ideas to create a few new categories of curves that achieve any given smoothness while consisting of piecewise polynomials or preserving the shape of a circle and/or sphere. 

\section{Preliminaries}\label{sec:Prelim}
In this section, we first explain the concepts of oriented point sets 
and flattenable point sets.  Then, we explain the regularity of a graph and geometric continuity/smoothness.

\subsection{Oriented Set of Points and Flattenable Points}
Consider the set of space points ${\bf v}_i, i=1, \cdots, N$ in $\mathbb{R}^n$ which is oriented in the sense that ${\bf v}_i$ is connected to ${\bf v}_{i+1}$
for all $i=1, \dots, N-1$ in the setting of space curves. We call it sometimes a 1D-oriented point set, sometimes an oriented curve data set, or 
sometimes a curve data set, just for simplicity. Next, we introduce the concept of flattenable points. 
\begin{definition}\label{flat2D}
	An 1D-oriented point set $\{\bfv_i, i=1, \cdots, N\} \subset \mathbb{R}^n$ is said to be 
	flattenable if for any  three 
 consecutive points ${\bf v}_{i-1}, {\bf v}_i, {\bf v}_{i+1}$ 
	there exist a line $L_i$ in $\mathbb{R}^n$ such 
 that the projections of these three points 
	$\bfv_{i-1}, \bfv_i, \bfv_{i+1}$ to $L_i$ preserve the order of these three points, 
 i.e. the projection
	of ${\bf v}_i$ is in the middle of the projections of ${\bf v}_{i-1}$ and ${\bf v}_{i+1}$.
\end{definition}

\subsection{Regularity of the Graph of Curves}
Consider a curve consisting of points in $\mathbb{R}^n$ which are defined by one variable parametric functions. 
For instance, a 
non-self intersects curve in $\mathbb{R}^n$ can be defined by 
$\Gamma(t)=(x_1(t),\ldots, x_n(t))$, where $t \in [a,b]$ for an interval $[a,b]$ and $x_i(t), i=1, \cdots, n$ are 
coordinate functions of $\mathbb{R}^n$, where $n\ge 2$. We call $\Gamma(t)$ a {\bf parametrization function} of the arc, and the arc which it parameterized is called {\bf the graph} of $\Gamma$. Obviously, if $\Gamma(t)$ 
is continuous, then its graph is continuous. However, the parametrization functions' smoothness is insufficient for the graph to be smooth. Let us look at the following example.
\begin{example} \label{ex: r1notg1}
	Let $\Gamma(t)= (t^2, t^3)$ for $t\in [-1, 1]$.  The parametric functions are differentiable, but the graph of $\Gamma(t)$ is not. As shown in Figure~\ref{ex1}, the
	curve has a cusp at $t=0$. That is,  the smoothness of the parametric functions did not guarantee that the graph of the parametric curve looks smooth.
	\begin{figure}
		\begin{picture}(250,150)
		\unitlength=0.008in
		\thicklines 
		\put(130,100.5){\line(1,0){20}}
		\put(130,99.5){\line(1,0){20}}
		\qbezier(150,100)(200,110)(300,200)
		\qbezier(150,100)(200,90)(300,0)
		\end{picture}
		\caption{The differentiability of the parametric functions of $\Gamma(t)=(t^2,t^3), t\in
			[-1, 1]$ 
			does not imply the graph is smooth. \label{ex1}}
	\end{figure}
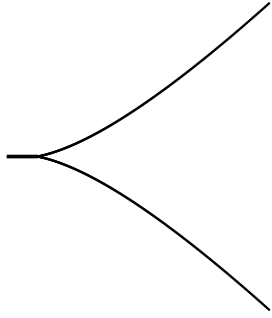
\end{example}
In the Example \ref{ex: r1notg1}, the cusp happens at $t=0$ where the tangent vector $\Gamma'(0)$ vanishes. Indeed, if the graph looks smooth, it should at least have a continuous tangent. However, the $C^1$ of its parametric function does not imply this property as we saw in the Example ~\ref{ex: r1notg1}. To distinguish the ideas, we call a graph of a parametric function has a {\bf geometric continuity} of degree 1 (denoted as $G^1$) if it has tangent lines on each point on it. If we think about the sufficient condition for a parametric function to have a $G^1$ graph, we use  the following definition:
\begin{definition}
	A $C^1$ parametrization function $\Gamma(t)=(x_1(t),\cdots , x_n(t))$ is said to be regular at $t \in (a,b)$ if $\|\Gamma'(t) \|:=\|(x_1'(t),\cdots , x_n'(t))\| > 0$.
\end{definition}
The vanish of the first derivative of $\Gamma$ gives us the graph without continuous tangent. We can easily see the following property is true.
\begin{proposition}
    If a pasteurization function $\Gamma(t): [a,b] \to \mathbb{R}^n$ is regular, then its graph is $G^1$.
\end{proposition}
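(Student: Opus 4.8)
The plan is to exhibit, at each point of the graph, a well-defined tangent line and then to verify that it is genuinely the limiting position of the secant lines, which is exactly what the stated definition of $G^1$ demands. First I would fix a parameter value $t_0 \in (a,b)$ and propose as the tangent line the line through $\Gamma(t_0)$ in the direction of the unit vector $T(t_0) := \Gamma'(t_0)/\|\Gamma'(t_0)\|$. The only thing that could obstruct this definition is a vanishing denominator, and regularity supplies precisely $\|\Gamma'(t_0)\| > 0$, so $T(t_0)$ is well-defined at every $t_0$.

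Next I would confirm that this line is the tangent in the geometric sense, i.e. the limit of secant lines through nearby points of the graph. For small $h \neq 0$, the secant line through $\Gamma(t_0)$ and $\Gamma(t_0+h)$ has direction proportional to the difference quotient $(\Gamma(t_0+h)-\Gamma(t_0))/h$. Since $\Gamma$ is $C^1$, writing $\Gamma(t_0+h)-\Gamma(t_0)=\Gamma'(t_0)h+o(h)$ and normalizing, the unit secant direction $(\Gamma(t_0+h)-\Gamma(t_0))/\|\Gamma(t_0+h)-\Gamma(t_0)\|$ converges to $T(t_0)$ as $h\to 0^+$ and to $-T(t_0)$ as $h\to 0^-$; because $\Gamma'(t_0)\neq 0$ these limits exist and determine the same line. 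Hence the secant lines approach the line through $\Gamma(t_0)$ spanned by $T(t_0)$ from both sides, so a tangent line exists at $\Gamma(t_0)$, and since $t_0$ was arbitrary the graph carries a tangent line at every point.

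To account for the word \emph{continuity} in geometric continuity, I would finish by noting that the tangent direction varies continuously: $\Gamma'$ is continuous as $\Gamma\in C^1$, the Euclidean norm is continuous, and regularity keeps $\|\Gamma'(t)\|$ bounded away from zero on any compact subinterval, so $T(t)=\Gamma'(t)/\|\Gamma'(t)\|$ is continuous in $t$ and the tangent line moves continuously along the graph.

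I expect the genuinely delicate point to be the distinction between the graph and its parametrization: a tangent line must be a property of the underlying arc, not an artifact of the chosen parametrization. This is why I would route the verification through the secant-limit characterization rather than simply reading off $\Gamma'$, since the secant construction involves only the image points $\Gamma(t)$ and is therefore invariant under reparametrization, whereas regularity is exactly the hypothesis that rules out the parametrization-induced cusp observed in Example~\ref{ex: r1notg1}.
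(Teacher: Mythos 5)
Your proof is correct, but it takes a genuinely different route from the paper, which in fact offers no argument at all: the proposition is introduced with ``We can easily see the following property is true,'' and the intended justification is essentially a one-liner once the formal notion of $G^r$ in Definition~\ref{GrCUrve} is in place --- the given regular $C^1$ parametrization $\Gamma$, restricted to a small parameter interval around any $t_0$ and shifted so that $t_0$ corresponds to $0$, is itself the required regular local parametrization with nonvanishing derivative, so the graph is $G^1$ by definition. You instead verify the informal characterization actually in force at that point of the paper (the graph ``has tangent lines at each point''): you exhibit the candidate tangent direction $T(t_0)=\Gamma'(t_0)/\|\Gamma'(t_0)\|$, show it is the limit of secant lines, and check that it varies continuously. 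This buys something the definitional one-liner does not: since the secant construction uses only image points, your argument shows that regularity produces genuine, parametrization-independent geometric tangency, which is exactly the distinction Example~\ref{ex: r1notg1} illustrates. Three small points to tighten: the secant normalization requires $\Gamma(t_0+h)\neq\Gamma(t_0)$ for small $h\neq 0$, which follows from $\Gamma'(t_0)\neq 0$ but should be said; at a self-intersection of the graph the ``tangent line at a point'' must be understood locally for the branch through $\Gamma(t_0)$, since the image set alone does not determine it (the paper sidesteps this by making its formal definition local in the parameter); and the endpoints $t_0=a,b$ need one-sided versions of your limits.
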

We may now extend the ideas to the high order of geometric smoothness.  In addition, if $\Gamma(t)$ is  
$C^2$ continuously differentiable in parameter $t$, its curvature which can be formulated as \cite{carmo_differential_2016}:
\begin{equation}
\label{curvature}
\kappa(t)=\dfrac{\sqrt{\|\Gamma'(t)\|^2 \|\Gamma''(t)\|^2 -\|\Gamma'(t)\cdot \Gamma''(t) \|^2}}{\|\Gamma'(t)\|^3}
\end{equation}
Since $\Gamma'(t)\not=0$, the curvature is well defined. If the curvature is continuous, we can say that the graph of $C^2$ parametric curve has continuous tangent and curvature. In general, the non-vanish of the first derivative is sufficient for a $C^r$ curve that has geometric continuity up to order $r$.\\

To summarize, the regularity of the parameterization makes the graph have geometric continuity up to the smoothness of the parameterization. We remark that:
\begin{remark}
	A $C^1$ regular curve has continuous tangents on its graph. A $C^2$ regular curve has continuous curvatures of its graph.
\end{remark}

\begin{figure}
\centering
\begin{tikzpicture}[scale=0.40]
	\begin{pgfonlayer}{nodelayer}
		\node [style=none] (0) at (-9, 1) {};
		\node [style=none] (1) at (-4.5, 1) {};
		\node [style=none] (2) at (-4.5, 1) {};
		\node [style=none] (3) at (-2, 2.75) {};
		\node [style=none] (4) at (1, 2.75) {};
		\node [style=none] (5) at (3.25, 2.25) {};
		\node [style=none] (6) at (5, 4) {};
		\node [style=none] (7) at (8.5, 2.25) {};
		\node [style=none] (8) at (10.75, 2.75) {};
		\node [style=none] (9) at (-2, -1.5) {};
		\node [style=none] (10) at (1, -1.5) {};
		\node [style=none] (11) at (3.5, -3) {};
		\node [style=none] (12) at (5.75, -0.5) {};
		\node [style=none] (13) at (8.25, -3) {};
		\node [style=none] (15) at (-7, 5.5) {\bf Parametric Space};
		\node [style=none] (16) at (7, 5.5) {\bf Graph};
		\node [style=none] (17) at (7, 3.5) {\tiny $G^r$};
		\node [style=none] (18) at (5.75, 0) {\tiny Not nessasary be $G^r$};
		\node [style=none] (19) at (-0.5, 3.25) {\tiny Regular $C^r$};
		\node [style=none] (20) at (-0.5, -0.75) {\tiny Only $C^r$};
	\end{pgfonlayer}
	\begin{pgfonlayer}{edgelayer}
		\draw (0.center) to (2.center);
		\draw [style=arrow] (4.center) to (3.center);
		\draw [style=line, in=165, out=75] (5.center) to (6.center);
		\draw [style=line, in=-165, out=-15] (6.center) to (7.center);
		\draw [style=line, in=150, out=15] (7.center) to (8.center);
		\draw [style=arrow] (10.center) to (9.center);
		\draw [style=line, in=-90, out=0, looseness=1.25] (11.center) to (12.center);
		\draw [style=line, in=-180, out=-90, looseness=1.25] (12.center) to (13.center);
	\end{pgfonlayer}
\end{tikzpicture}
\caption{The graph of a curve is not necessary to be $G^r$ without the regularity of the parametric functions.}\label{fig:rmkGr}
\end{figure}
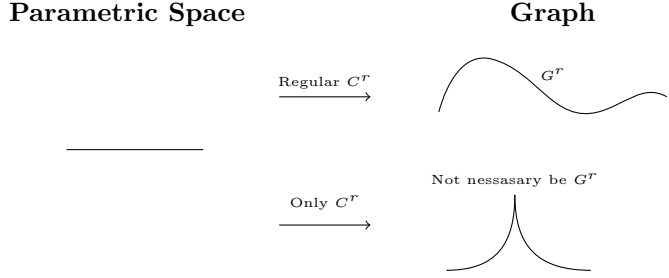

In the end of this subsection, let us present an example that a linear combination of two regular curve
pieces may not be regular any more. 
\begin{example}\label{Weitedsum_of_regular_curve_maynot_regular}
	Let ${\bf F}(t)=( 0.5+4(t-0.5)^3, t(1-t)^2)$, ${\bf G}(t)=( 0.5+4(t-0.5)^3, t^2(1-t))$ and 1-blending function $B_1=1-B_2=1+2t^3-3t^2$ with $B_2=-2t^3+3t^2$. ${\bf F}$ and ${\bf G}$ are $C^r$ regular curve with $r\ge 1$ 
 which connect two points $(0,0)$ and $(1,0)$ for $t \in [0,1]$. However, the weighted sum ${\bf F}(t)B_1(t)+{\bf G}(t)B_2(t)$ is not regular and not $G^1$ at $t=0.5$. See the graph on the right of Figure~\ref{ex2}. \\

\begin{figure}[thpb]
	\includegraphics[width=0.3\textwidth]{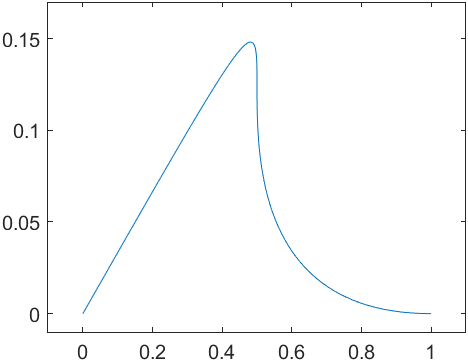}
	\includegraphics[width=0.3\textwidth]{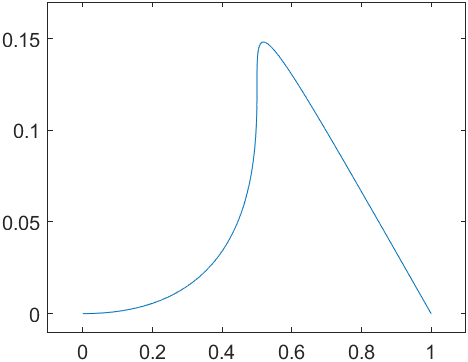}
	\includegraphics[width=0.3\textwidth]{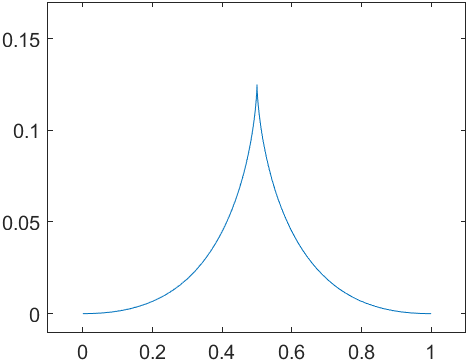}
 \caption{A linear combination of two regular curves may not be regular }\label{ex2}
 \end{figure}
\end{example}

\subsection{Connection of Geometric Continuity Curves}\label{sec:GeoCont}
For curves, the regularity of its parametric function can guarantee nice-looking graphs. However, in most practical
applications, joining multiple curves with different parametric functions is required to express a long curve. 
That is, we may not have a global regular parametric function for the entire curve. It is possible that 
we may still have a nice-looking entire curve even if the parametric functions of each piece are not joined smoothly. Let us see the following example.
\begin{example}\label{ex:liseg}
	Consider a parametric curve defined below. 
	\[ \Gamma(t) =\begin{cases} 
	(t , t) & t \in [-1,0] \\
	(3t, 3t) & t \in (0,1].  
	\end{cases}
	\]
	The graph of $\Gamma(t)$ is a line segment from $(-1,-1)$ to $(3,3)$ in $\mathbb{R}^2$ which joins smoothly at $t=0$. However, the parametric function itself is not differentiable at $t=0$.  
 Nevertheless, we can define a bijection $t=S(s)$, where
	\[ S(s) =\begin{cases} 
	s & s \in [-1,0] \\
	\frac{s}{3} & t \in (0,3] 
	\end{cases}
	\]
	such that $\Gamma(S(s))=s$ for $s \in [-1,3]$. We can see that $\Gamma \circ S$ is now a regular parametric function even though $\Gamma$ and $S$ are not smooth at $t=S(0)=0$. 
\end{example}

We see the existence of a regular parametric function is crucial to the geometrical smooth feature. We are now ready to formally define $G^r$ smoothness for $r\ge 1$ for a curve consisting of multiple pieces.   

\begin{definition}\label{GrCUrve} [cf. \cite{HUTSUNG-WEI2023CMfS}]
	Fix $r\ge 1$. 	A graph $\Gamma$ of curve in $\mathbb{R}^n$ is said to be $G^r$ continuity at a point ${\bf p} \in \Gamma$ if there exist a regular $C^r$ parametrization of functions 
	$x_i(t),i=1, \cdots, n$ and a constant	$\epsilon>0$ such that  $\Gamma(t)=(x_1(t), \cdots, x_n(t)): t\in (- \epsilon, \epsilon) \rightarrow \Gamma$ with $\Gamma(0)={\bf p}$ and 
	$\nabla \Gamma(0):=\Gamma'(0)$ is a nonzero vector, i.e. $\|\nabla \Gamma(0)\|> 0$. 
	An entire curve $\Gamma$ in $\mathbb{R}^n$ is said to be $G^r$ continuous if the curve is of $G^r$ continuity at each point on the curve.
\end{definition}
\begin{figure}
\centering
\begin{tikzpicture}[scale=0.40]
	\begin{pgfonlayer}{nodelayer}
		\node [style=none] (0) at (5, 3.75) {};
		\node [style=none] (1) at (9.5, 3.75) {};
		\node [style=none] (2) at (9.5, 3.75) {};
		\node [style=none] (3) at (-0.75, 3.75) {};
		\node [style=none] (4) at (2.25, 3.75) {};
		\node [style=none] (5) at (-9.5, 1.75) {};
		\node [style=none] (6) at (-7.75, 3.5) {};
		\node [style=none] (7) at (-4.25, 1.75) {};
		\node [style=none] (8) at (-2, 2.25) {};
		\node [style=none] (11) at (-9.25, -3.5) {};
		\node [style=none] (12) at (-7, -1) {};
		\node [style=none] (13) at (-4.5, -3.5) {};
		\node [style=none] (15) at (-7, 5.5) {\bf Graph};
		\node [style=none] (16) at (7, 5.5) {\bf Parametric Space};
		\node [style=none] (17) at (-6, 3.5) {\tiny $G^r$};
		\node [style=none] (18) at (-7, -0.5) {\tiny Not $G^r$};
		\node [style=none] (19) at (0.75, 4.25) {\tiny Existence of regular $C^r$};
		\node [style=none] (21) at (5, 2) {};
		\node [style=none] (22) at (9.5, 2) {};
		\node [style=none] (23) at (9.5, 2) {};
		\node [style=none] (24) at (-0.75, 2) {};
		\node [style=none] (25) at (2.25, 2) {};
		\node [style=none] (26) at (0.75, 2.5) {\tiny $C^0$};
		\node [style=none] (27) at (5, -2) {};
		\node [style=none] (28) at (9.5, -2) {};
		\node [style=none] (29) at (9.5, -2) {};
		\node [style=none] (30) at (-0.75, -2) {};
		\node [style=none] (31) at (2.25, -2) {};
		\node [style=none] (32) at (0.75, -1.0) {$\substack{\text{\tiny Can be } C^r , \\ \text{\tiny but not regular}}$};
	\end{pgfonlayer}
	\begin{pgfonlayer}{edgelayer}
		\draw (0.center) to (2.center);
		\draw [style=line, in=165, out=75] (5.center) to (6.center);
		\draw [style=line, in=-165, out=-15] (6.center) to (7.center);
		\draw [style=line, in=150, out=15] (7.center) to (8.center);
		\draw [style=line, in=-90, out=0, looseness=1.25] (11.center) to (12.center);
		\draw [style=line, in=-180, out=-90, looseness=1.25] (12.center) to (13.center);
		\draw [style=arrow] (3.center) to (4.center);
		\draw (21.center) to (23.center);
		\draw [style=arrow] (24.center) to (25.center);
		\draw (27.center) to (29.center);
		\draw [style=arrow] (30.center) to (31.center);
	\end{pgfonlayer}
\end{tikzpicture}
\caption{Only the Graph with $G^r$ Property can have a regular Parametric Function, While the non $G^r$ Graph can}\label{fig:rmkGr2}
\end{figure}
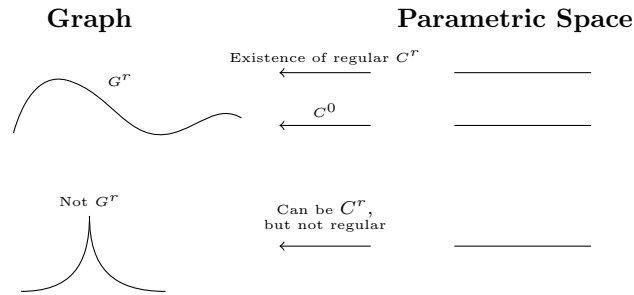

In short, the $G^r$ continuity is a property of a curve-like subset $\Gamma$ in $\mathbb{R}^n$. Regularity and $C^r$ are properties of the underlying parametric functions
of the curve. Figure ~\ref{fig:rmkGr2} shows the idea of the Definition ~\ref{GrCUrve}. Let us summarize the above discussion in the following remark.
\begin{remark}
\label{rem1}
\hfill
	\begin{itemize}
		\item A curve with $C^r$ parametric function does not imply the graph of the curve is $G^r$. See Example \ref{ex: r1notg1}.
		\item If a curve with $C^r$ parametric function is regular, the graph of the curve is $G^r$. 
		\item The graph $\Gamma$ is of $G^r$ does not imply its parametric function is regular or $C^r$. See Example \ref{ex:liseg}. However, if $\Gamma(t)$ is a continuous function, for any $t=t_0$, there exist a bijection function $t=S(s)$, which defined on small interval $(- \epsilon, \epsilon)$ with $S(0)=t_0$, such that $\Gamma(S(s))$ is regular and $C^r$ on $(- \epsilon, \epsilon)$.
	\end{itemize}
\end{remark}
In other words, a bad graph (not $G^r$) may have a $C^r$ smooth parametric function, but it can not have a regular $C^r$ smooth function. Vice versa, a $G^r$ graph could adapt different parametric functions $\bff_1$ and $\bff_2$, see Example ~\ref{ex:liseg}. Figure ~\ref{fig: fgrelation} shows the relation between the function and graph. It's unnecessary to describe a $G^r$ smooth curve with a smooth parametric function. However, there must exist one regular $C^r$ function which parametrizes the curve.

\begin{figure}
    \centering

\begin{tikzpicture}[x=0.75pt,y=0.75pt,yscale=-1,xscale=0.9]

\draw   (190,176) .. controls (190,148.39) and (240.82,126) .. (303.5,126) .. controls (366.18,126) and (417,148.39) .. (417,176) .. controls (417,203.61) and (366.18,226) .. (303.5,226) .. controls (240.82,226) and (190,203.61) .. (190,176) -- cycle ;
\draw    (54,219) -- (149,219) ;
\draw    (100,185) -- (100,126) ;
\draw [shift={(100,124)}, rotate = 90] [color={rgb, 255:red, 0; green, 0; blue, 0 }  ][line width=0.75]    (10.93,-3.29) .. controls (6.95,-1.4) and (3.31,-0.3) .. (0,0) .. controls (3.31,0.3) and (6.95,1.4) .. (10.93,3.29)   ;
\draw    (62,97) .. controls (102,67) and (122,127) .. (162,97) ;
\draw   (254,134) .. controls (254,74.35) and (339.29,26) .. (444.5,26) .. controls (549.71,26) and (635,74.35) .. (635,134) .. controls (635,193.65) and (549.71,242) .. (444.5,242) .. controls (339.29,242) and (254,193.65) .. (254,134) -- cycle ;
\draw   (278,174) .. controls (278,159.64) and (305.53,148) .. (339.5,148) .. controls (373.47,148) and (401,159.64) .. (401,174) .. controls (401,188.36) and (373.47,200) .. (339.5,200) .. controls (305.53,200) and (278,188.36) .. (278,174) -- cycle ;

\draw (80,146.4) node [anchor=north west][inner sep=0.75pt]    {$f$};
\draw (92,64.4) node [anchor=north west][inner sep=0.75pt]    {$\Gamma$};
\draw (206,170.4) node [anchor=north west][inner sep=0.75pt]    {$f\ \ is\ C^{r}$};
\draw (284,166.0) node [anchor=north west][inner sep=0.75pt]    {$f\ \ is\ regular\ C^{r}$};
\draw (269,96.4) node [anchor=north west][inner sep=0.75pt]    {$\Gamma\ is\ G^{r} \ \Leftrightarrow \ f\ can\ be\ reparamertrize\ as\ regular\ C^{r}$};
\end{tikzpicture}
    \caption{The Relation Between Parameterize Function $f$ and its Gragh $G$.}
    \label{fig: fgrelation}
\end{figure}
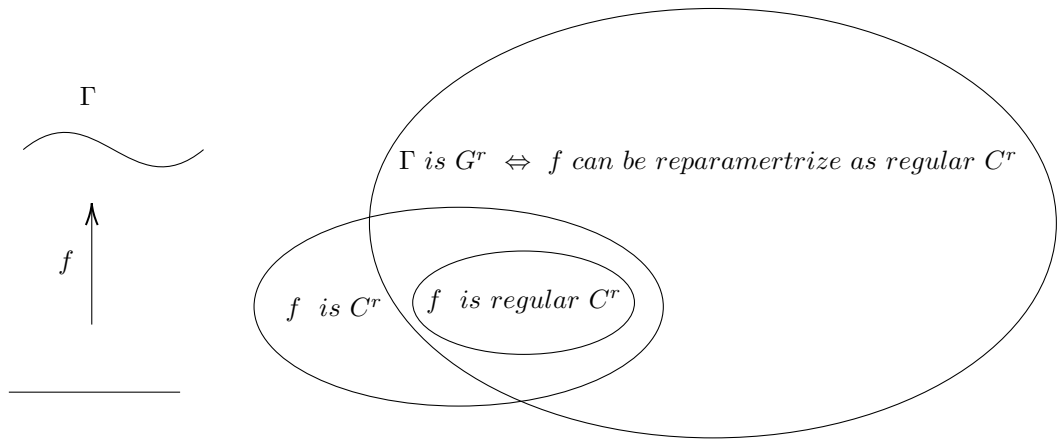

\section{Construction of Geometric Smooth Interpolatory Curves}
This section is divided into five subsections. In the first four subsection, we explain our construction. 
We shall suppose that a point cloud ${\bf P}=\{\bfv_i, i=0,1, \cdots, N\}$ in $\mathbb{R}^3$ is oriented and is flattenable. 
We start the construction with the local curves, reparametrizing them (i.e. redistribution), constructing a blending function, and gluing them together.  
In addition to these four subsections, we 
shall show that the regularity of the curves based on our construction in the 
fifth subsection. 

\subsection{ Construction of Local  Curves}\label{Local_QRP}
Given 3 ordered point $ \bfv_{i-1}, \bfv_i, \bfv_{i+1}$ in ${\bf P}$, we want to construct a $C^r$ regular curve interpolating these three points. 
\begin{definition}\label{def:localcurve}
 A function $\bff_i: [p_i,r_i] \to \mathbb{R}^3$ is said to be a local $G^r$ interpolating function of ${\bf P}$ (at point $\bfv_i$) if $\bff_i$ is regular $C^r$ function and there exist a point $q_i \in (p_i, r_i)$ such that $\bff_i(q_i)=\bfv_i$. \\
\end{definition}
Note that we do not require $\bff_i(p_i)=\bfv_{i-1}$ or $\bff_i(r_i)=\bfv_{i+1}$ during our construction and we will still get an interpolating curve after gluing them together. However,  having local curves pass $\bfv_{i-1}$ and $\bfv_{i+1}$ gives us more pleasing curves
in most cases. 
Let us construct a few different types of local interpolating curves in the following examples. 
\begin{example}[Spatial Parabola]\label{ex:para}
Suppose $\{ \bfv_{i-1}, \bfv_i, \bfv_{i+1} \}$ are three successive points in a flattenable data set.  We define $\tilde{x}$ and $\Tilde{y}$ are two vectors which $\bfp_0+ span\{ \tilde{x},\tilde{y} \}$ is an plane that contains $ \bfv_{i-1}, \bfv_i, \bfv_{i+1}$. Since  $\bfv_{i-1}, \bfv_i, \bfv_{i+1}$ is flatenable, one may choose $\tilde{x}$ such that the projections of $ \bfv_{i-1}, 
\bfv_i, \bfv_{i+1}$ to the $\Tilde{x}$-axis preserve the order, says $p_{i},q_{i}, r_{i}$. Moreover, we can interpolate 
$ \bfv_{i-1}, \bfv_i, \bfv_{i+1}$ by $\{\bfp_0+ s \tilde{\bfx} + Q(s) \tilde{\bfy} \, | \, s \in 
[p_{i},r_{i}] \}$, where $Q(s)$ is a quadratic function satisfying $Q'(q_{i})=0$ by choosing $\tilde{\bf x}$ and $\tilde{\bf y}$ carefully, see Figure ~\ref{fig:Qudratic}. The proof of the existence of such construction is left to the reader.

\begin{figure}
    \centering

\tikzset{every picture/.style={line width=0.75pt}} 

\begin{tikzpicture}[x=0.75pt,y=0.75pt,yscale=-1,xscale=1]

\draw  [draw opacity=0][fill={rgb, 255:red, 0; green, 0; blue, 0 }  ,fill opacity=1 ] (114.67,178.91) .. controls (114.67,176.75) and (116.42,175) .. (118.58,175) .. controls (120.73,175) and (122.48,176.75) .. (122.48,178.91) .. controls (122.48,181.07) and (120.73,182.82) .. (118.58,182.82) .. controls (116.42,182.82) and (114.67,181.07) .. (114.67,178.91) -- cycle ;
\draw  [draw opacity=0][fill={rgb, 255:red, 0; green, 0; blue, 0 }  ,fill opacity=1 ] (110.67,136.24) .. controls (110.67,134.08) and (112.42,132.33) .. (114.58,132.33) .. controls (116.73,132.33) and (118.48,134.08) .. (118.48,136.24) .. controls (118.48,138.4) and (116.73,140.15) .. (114.58,140.15) .. controls (112.42,140.15) and (110.67,138.4) .. (110.67,136.24) -- cycle ;
\draw  [draw opacity=0][fill={rgb, 255:red, 0; green, 0; blue, 0 }  ,fill opacity=1 ] (193.33,121.58) .. controls (193.33,119.42) and (195.08,117.67) .. (197.24,117.67) .. controls (199.4,117.67) and (201.15,119.42) .. (201.15,121.58) .. controls (201.15,123.73) and (199.4,125.48) .. (197.24,125.48) .. controls (195.08,125.48) and (193.33,123.73) .. (193.33,121.58) -- cycle ;
\draw    (158.48,245.09) -- (243.37,119.41) ;
\draw [shift={(244.48,117.76)}, rotate = 124.03] [color={rgb, 255:red, 0; green, 0; blue, 0 }  ][line width=0.75]    (10.93,-3.29) .. controls (6.95,-1.4) and (3.31,-0.3) .. (0,0) .. controls (3.31,0.3) and (6.95,1.4) .. (10.93,3.29)   ;
\draw    (118.58,178.91) .. controls (111.82,163.09) and (107.82,147.76) .. (114.58,136.24) ;
\draw    (114.58,136.24) .. controls (129.15,116.42) and (149.15,121.76) .. (197.24,121.58) ;
\draw  [dash pattern={on 0.84pt off 2.51pt}]  (140.58,97.91) -- (88.58,174.58) ;
\draw    (158.48,245.09) -- (58.13,175.56) ;
\draw [shift={(56.48,174.42)}, rotate = 34.71] [color={rgb, 255:red, 0; green, 0; blue, 0 }  ][line width=0.75]    (10.93,-3.29) .. controls (6.95,-1.4) and (3.31,-0.3) .. (0,0) .. controls (3.31,0.3) and (6.95,1.4) .. (10.93,3.29)   ;
\draw  [dash pattern={on 0.84pt off 2.51pt}]  (73.82,149.09) -- (175.82,219.76) ;
\draw  [dash pattern={on 0.84pt off 2.51pt}]  (114.58,136.24) -- (195.15,191.09) ;
\draw  [dash pattern={on 0.84pt off 2.51pt}]  (197.24,121.58) -- (228.48,141.76) ;
\draw  [draw opacity=0][fill={rgb, 255:red, 0; green, 0; blue, 0 }  ,fill opacity=1 ] (156.53,245.09) .. controls (156.53,244.01) and (157.41,243.14) .. (158.48,243.14) .. controls (159.56,243.14) and (160.44,244.01) .. (160.44,245.09) .. controls (160.44,246.17) and (159.56,247.05) .. (158.48,247.05) .. controls (157.41,247.05) and (156.53,246.17) .. (156.53,245.09) -- cycle ;

\draw (105.91,182.31) node [anchor=north west][inner sep=0.75pt]    {$v_{i-1}$};
\draw (93.24,118.98) node [anchor=north west][inner sep=0.75pt]    {$v_{i}$};
\draw (193.24,97.64) node [anchor=north west][inner sep=0.75pt]    {$v_{i+1}$};
\draw (157.24,244.98) node [anchor=north west][inner sep=0.75pt]    {$p_{0}$};
\draw (246.58,97.4) node [anchor=north west][inner sep=0.75pt]    {$\tilde{x}$};
\draw (39.91,158.73) node [anchor=north west][inner sep=0.75pt]    {$\tilde{y}$};
\end{tikzpicture}
    \caption{Choosing $\tilde{\bfx}$ and $\tilde{\bfy}$ such that the parabola pass $\bfv_{i-1}$ $\bfv_{i}$, and $\bfv_{i+1}$ has a extreme at $\bfv_{i}$.}
    \label{fig:Qudratic}
\end{figure}

To obtain a parametric function from the above construction, we need a rotation.  Let us use $(x, y, z)$ for as the coordinate in $\mathbb{R}^3$, as the example. We rewrite $\tilde{\bf x}= R_{11}x+R_{12}y+R_{13}z$ , $\tilde{\bf y}=R_{21}x+R_{22}y+R_{23}z$. We can then define $\bff_i(s)=\bfp_0 + s \tilde{\bfx} +Q(s) \tilde{\bfy}$, i.e.
$$
\bff_i(s)=
\begin{bmatrix}
x_i(s) \\
y_i(s) \\
z_i(s)
\end{bmatrix}
= \bfp_0 +
\begin{bmatrix} 
R_{11} & R_{12}  & R_{13}\\
R_{21} & R_{22}  & R_{23}
\end{bmatrix}^T
\begin{bmatrix}
s\\
Q(s)
\end{bmatrix}.
$$

We can see that $\bff_i$ is regular by
	$$
	\bff'_i(s)=
	\begin{bmatrix} 
	R_{11} & R_{12}  & R_{13}\\
	R_{21} & R_{22}  & R_{23}
	\end{bmatrix}^T
	\begin{bmatrix}
	1\\
	Q'(s)
	\end{bmatrix},
	$$
 which is never zero. Hence, the parabola is a local $G^\infty$ interpolating function.\\
The various $\bff_i$ can be visualized in Figure~\ref{mainfig}.

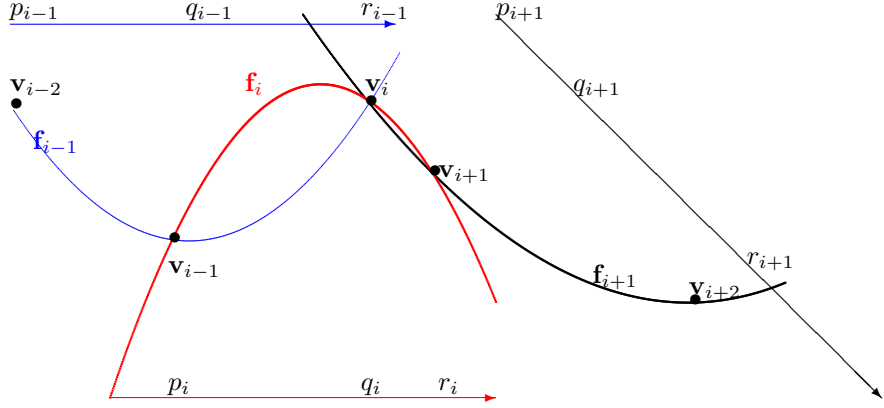
\begin{figure}[htpb]
	\begin{picture}(250,150)(50,30)
	\unitlength=0.01in
	\textcolor{blue}{
		\qbezier(100,200)(200,50)(300,230)
		\put(110,180){$\bff_{i-1}$}
		\put(98,245){\vector(1,0){200}}
	}
	\put(98,250){$p_{i-1}$}
	\put(189,250){$q_{i-1}$}
	\put(280,250){$r_{i-1}$}
	\put(98,200){$\bullet$}
	\put(98,210){$\bfv_{i-2}$}
	\put(400,110){$\bff_{i+1}$}
	\put(450,98){$\bullet$}
	\put(450,104){$\bfv_{i+2}$}
	\put(350,250){\vector(1,-1){200}}
	\put(350,250){$p_{i+1}$}
	\put(390,210){$q_{i+1}$}
	\put(480,120){$r_{i+1}$}
	\textcolor{red}{\put(150,50){\vector(1,0){200}}}
	\put(180,54){$p_{i}$}
	\put(280,54){$q_i$}
	\put(320,54){$r_{i}$}
	\thicklines
	\qbezier(250,250)(380,60)(500,110)
	\textcolor{red}{
		\qbezier(150,50)(250,350)(350,100)
		\put(220,210){$\bff_i$}
	}
	\put(180,130){$\bullet$}
	\put(180,115){$\bfv_{i-1}$}
	\put(282,202){$\bullet$}
	\put(282,210){$\bfv_i$}
	\put(315,165){$\bullet$}
	\put(320,165){$\bfv_{i+1}$}
	\end{picture}
	\caption{An illustration of point cloud and different local curves \label{mainfig}}
\end{figure}

\end{example}

\begin{example}[Arc of Circle or Ellipse]\label{sec: exp2 Arc}
It it known that 3 points $ \bfv_{i-1}, \bfv_i, \bfv_{i+1}$ in $\mathbb{R}^2$ can uniquely determine a circle (a line can be seen as a degenerated circle) so are the points in $\mathbb{R}^n$ when they are flattenable for $n\ge 3$. 
If  $ \bfv_{i-1}, \bfv_i, \bfv_{i+1}$ are flattenable, we can choose an arc of the circle in which $\bfv_i$ is located within the arc while $ \bfv_{i-1}$ and $\bfv_{i+1}$ are the endpoints of the arc, see the left of Figure ~\ref{fig:LocalCircle}. However, the circle arc could be very off when the angle $ \bfv_{i-1}, \bfv_i, \bfv_{i+1}$ is relatively small (Middle of Figure ~\ref{fig:LocalCircle}). 
In \cite{Yuksel2020}, Yuksel proposes to use the ellipse arc instead if one of the arc $ \bfv_{i-1}- \bfv_i$, $\bfv_i-\bfv_{i+1}$ has corresponding to an angle larger than 90 degrees.
The construction can be described as follows: first, place $\bfv_i$ on one axis of the ellipse. Second, place either $ \bfv_{i-1}$ or $\bfv_{i_1}$ on another axis of the ellipse, depending on which further from $\bfv_i$. Then, there is a unique ellipse that passes through three points with the axis with the specified property. See the right of Figure ~\ref{fig:LocalCircle}.

\begin{figure}
    \centering

\tikzset{every picture/.style={line width=0.75pt}} 

\begin{tikzpicture}[x=0.75pt,y=0.75pt,yscale=-1,xscale=1]

\draw  [draw opacity=0] (283.33,208.81) .. controls (261.78,198.67) and (246.4,177.24) .. (245.18,151.78) .. controls (243.42,115.1) and (271.74,83.93) .. (308.42,82.18) .. controls (345.11,80.42) and (376.27,108.74) .. (378.03,145.42) .. controls (378.79,161.41) and (373.85,176.34) .. (364.99,188.25) -- (311.6,148.6) -- cycle ; \draw  [color={rgb, 255:red, 208; green, 2; blue, 27 }  ,draw opacity=1 ] (283.33,208.81) .. controls (261.78,198.67) and (246.4,177.24) .. (245.18,151.78) .. controls (243.42,115.1) and (271.74,83.93) .. (308.42,82.18) .. controls (345.11,80.42) and (376.27,108.74) .. (378.03,145.42) .. controls (378.79,161.41) and (373.85,176.34) .. (364.99,188.25) ;  
\draw  [draw opacity=0][fill={rgb, 255:red, 0; green, 0; blue, 0 }  ,fill opacity=1 ] (87.17,196.83) .. controls (87.17,194.67) and (88.92,192.92) .. (91.08,192.92) .. controls (93.23,192.92) and (94.98,194.67) .. (94.98,196.83) .. controls (94.98,198.99) and (93.23,200.74) .. (91.08,200.74) .. controls (88.92,200.74) and (87.17,198.99) .. (87.17,196.83) -- cycle ;
\draw  [draw opacity=0][fill={rgb, 255:red, 0; green, 0; blue, 0 }  ,fill opacity=1 ] (83.17,154.17) .. controls (83.17,152.01) and (84.92,150.26) .. (87.08,150.26) .. controls (89.23,150.26) and (90.98,152.01) .. (90.98,154.17) .. controls (90.98,156.33) and (89.23,158.08) .. (87.08,158.08) .. controls (84.92,158.08) and (83.17,156.33) .. (83.17,154.17) -- cycle ;
\draw  [draw opacity=0][fill={rgb, 255:red, 0; green, 0; blue, 0 }  ,fill opacity=1 ] (165.83,139.5) .. controls (165.83,137.34) and (167.58,135.59) .. (169.74,135.59) .. controls (171.9,135.59) and (173.65,137.34) .. (173.65,139.5) .. controls (173.65,141.66) and (171.9,143.41) .. (169.74,143.41) .. controls (167.58,143.41) and (165.83,141.66) .. (165.83,139.5) -- cycle ;
\draw  [draw opacity=0] (89.58,195.45) .. controls (80.53,179.61) and (80.75,159.32) .. (91.82,143.31) .. controls (107.2,121.11) and (137.66,115.56) .. (159.87,130.94) .. controls (164.7,134.28) and (168.75,138.34) .. (171.96,142.87) -- (132.04,171.15) -- cycle ; \draw   (89.58,195.45) .. controls (80.53,179.61) and (80.75,159.32) .. (91.82,143.31) .. controls (107.2,121.11) and (137.66,115.56) .. (159.87,130.94) .. controls (164.7,134.28) and (168.75,138.34) .. (171.96,142.87) ;  
\draw  [draw opacity=0][fill={rgb, 255:red, 0; green, 0; blue, 0 }  ,fill opacity=1 ] (279.87,208.81) .. controls (279.87,206.89) and (281.42,205.34) .. (283.33,205.34) .. controls (285.25,205.34) and (286.8,206.89) .. (286.8,208.81) .. controls (286.8,210.73) and (285.25,212.28) .. (283.33,212.28) .. controls (281.42,212.28) and (279.87,210.73) .. (279.87,208.81) -- cycle ;
\draw  [draw opacity=0][fill={rgb, 255:red, 0; green, 0; blue, 0 }  ,fill opacity=1 ] (252.35,183.96) .. controls (252.35,182.04) and (253.91,180.49) .. (255.82,180.49) .. controls (257.74,180.49) and (259.29,182.04) .. (259.29,183.96) .. controls (259.29,185.88) and (257.74,187.43) .. (255.82,187.43) .. controls (253.91,187.43) and (252.35,185.88) .. (252.35,183.96) -- cycle ;
\draw  [draw opacity=0][fill={rgb, 255:red, 0; green, 0; blue, 0 }  ,fill opacity=1 ] (361.52,188.25) .. controls (361.52,186.34) and (363.07,184.78) .. (364.99,184.78) .. controls (366.91,184.78) and (368.46,186.34) .. (368.46,188.25) .. controls (368.46,190.17) and (366.91,191.72) .. (364.99,191.72) .. controls (363.07,191.72) and (361.52,190.17) .. (361.52,188.25) -- cycle ;
\draw [color={rgb, 255:red, 74; green, 144; blue, 226 }  ,draw opacity=1 ][fill={rgb, 255:red, 74; green, 144; blue, 226 }  ,fill opacity=1 ] [dash pattern={on 4.5pt off 4.5pt}]  (395.53,158.82) -- (354.71,252) ;
\draw  [dash pattern={on 4.5pt off 4.5pt}]  (283.33,208.81) -- (360,241.2) ;
\draw  [dash pattern={on 4.5pt off 4.5pt}]  (255.82,183.96) -- (364.4,230) ;
\draw  [dash pattern={on 4.5pt off 4.5pt}]  (364.99,188.25) -- (379.6,194.8) ;
\draw  [draw opacity=0][dash pattern={on 4.5pt off 4.5pt}] (566.14,142.18) .. controls (596.54,183.9) and (611.04,224.98) .. (598.52,234.16) .. controls (585.92,243.4) and (550.76,216.85) .. (519.97,174.86) .. controls (505.1,154.58) and (493.97,134.41) .. (487.86,117.93) -- (542.78,158.14) -- cycle ; \draw  [dash pattern={on 4.5pt off 4.5pt}] (566.14,142.18) .. controls (596.54,183.9) and (611.04,224.98) .. (598.52,234.16) .. controls (585.92,243.4) and (550.76,216.85) .. (519.97,174.86) .. controls (505.1,154.58) and (493.97,134.41) .. (487.86,117.93) ;  
\draw  [draw opacity=0] (488.07,118.5) .. controls (481.34,100.58) and (480.46,86.95) .. (487.04,82.12) .. controls (499.62,72.89) and (534.74,99.39) .. (565.51,141.32) -- (542.78,158.14) -- cycle ; \draw  [color={rgb, 255:red, 208; green, 2; blue, 27 }  ,draw opacity=1 ] (488.07,118.5) .. controls (481.34,100.58) and (480.46,86.95) .. (487.04,82.12) .. controls (499.62,72.89) and (534.74,99.39) .. (565.51,141.32) ;  
\draw  [draw opacity=0][fill={rgb, 255:red, 0; green, 0; blue, 0 }  ,fill opacity=1 ] (562.24,141.32) .. controls (562.24,139.51) and (563.7,138.05) .. (565.51,138.05) .. controls (567.32,138.05) and (568.78,139.51) .. (568.78,141.32) .. controls (568.78,143.13) and (567.32,144.59) .. (565.51,144.59) .. controls (563.7,144.59) and (562.24,143.13) .. (562.24,141.32) -- cycle ;
\draw  [draw opacity=0][fill={rgb, 255:red, 0; green, 0; blue, 0 }  ,fill opacity=1 ] (483.48,82.34) .. controls (483.48,80.53) and (484.94,79.07) .. (486.75,79.07) .. controls (488.56,79.07) and (490.02,80.53) .. (490.02,82.34) .. controls (490.02,84.15) and (488.56,85.61) .. (486.75,85.61) .. controls (484.94,85.61) and (483.48,84.15) .. (483.48,82.34) -- cycle ;
\draw  [dash pattern={on 0.84pt off 2.51pt}]  (565.51,141.32) -- (519.46,174.17) ;
\draw  [dash pattern={on 0.84pt off 2.51pt}]  (486.75,82.34) -- (598.57,234.12) ;
\draw  [draw opacity=0][fill={rgb, 255:red, 0; green, 0; blue, 0 }  ,fill opacity=1 ] (484.8,118.5) .. controls (484.8,116.69) and (486.27,115.23) .. (488.07,115.23) .. controls (489.88,115.23) and (491.34,116.69) .. (491.34,118.5) .. controls (491.34,120.3) and (489.88,121.77) .. (488.07,121.77) .. controls (486.27,121.77) and (484.8,120.3) .. (484.8,118.5) -- cycle ;

\draw (78.41,200.23) node [anchor=north west][inner sep=0.75pt]    {$v_{i-1}$};
\draw (65.74,136.9) node [anchor=north west][inner sep=0.75pt]    {$v_{i}$};
\draw (165.74,115.57) node [anchor=north west][inner sep=0.75pt]    {$v_{i+1}$};
\draw (272.91,211.53) node [anchor=north west][inner sep=0.75pt]    {$v_{i-1}$};
\draw (235.04,173.8) node [anchor=north west][inner sep=0.75pt]    {$v_{i}$};
\draw (341.04,162.87) node [anchor=north west][inner sep=0.75pt]    {$v_{i+1}$};
\draw (571.61,125.95) node [anchor=north west][inner sep=0.75pt]    {$v_{i+1}$};
\draw (449.62,110.86) node [anchor=north west][inner sep=0.75pt]    {$v_{i-1}$};
\draw (476.72,58.07) node [anchor=north west][inner sep=0.75pt]    {$v_{i}$};

\end{tikzpicture}

    \caption{Left: an Arc Passing Through $\bfv_{i-1}, \bfv_i$, and  $\bfv_{i+1}$. Mid: $\bfv_{i-1}, \bfv_i$, and  $\bfv_{i+1}$ are Flatenale (As they can Project to the Blue Dash Line and Preserve the Order). However, the Local Curve is Off. Right: The Construction of the Ellipse Local Curve.}
    \label{fig:LocalCircle}
\end{figure}
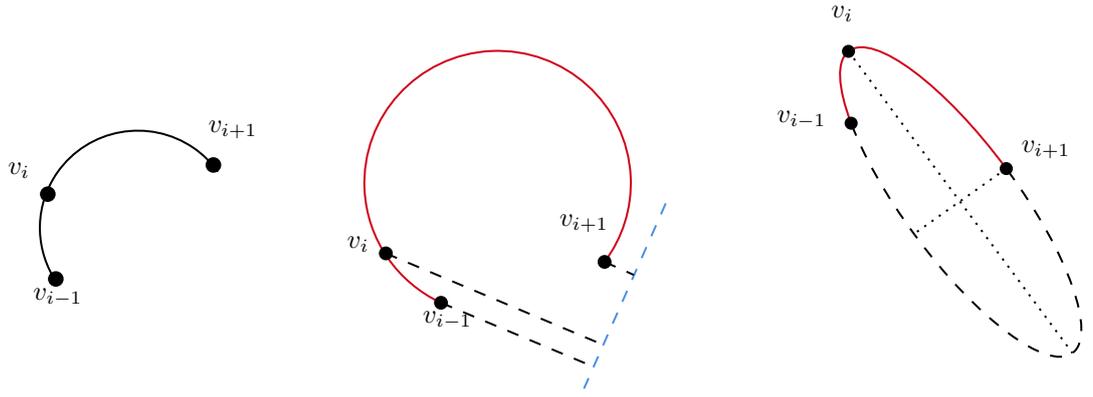

The parametric function of the arc can be defined by the polar coordinate. As shown in Figure ~\ref{fig:ArcPara}, We can parameterize the arc by $\bff_i(t)=\bfp_0+\cos(-t) \tilde{\bfx}+\sin(-t) \tilde{\bfy}$, where $\tilde{\bfx}$ and $\tilde{\bfy}$ axis of circle or ellipse. One can easily see that the parametric function is a regular $G^r$ local function.

\begin{figure}
    \centering

\tikzset{every picture/.style={line width=0.75pt}} 

\begin{tikzpicture}[x=0.75pt,y=0.75pt,yscale=-1,xscale=1]

\draw    (142.17,147.84) -- (173.33,92.23) ;
\draw [shift={(174.3,90.48)}, rotate = 119.26] [color={rgb, 255:red, 0; green, 0; blue, 0 }  ][line width=0.75]    (10.93,-3.29) .. controls (6.95,-1.4) and (3.31,-0.3) .. (0,0) .. controls (3.31,0.3) and (6.95,1.4) .. (10.93,3.29)   ;
\draw  [draw opacity=0] (81.55,171.22) .. controls (78.82,164.33) and (77.23,156.87) .. (77.01,149.03) .. controls (75.99,113.04) and (104.34,83.03) .. (140.34,82.02) .. controls (176.33,81) and (206.34,109.35) .. (207.35,145.35) .. controls (207.69,157.36) and (204.76,168.7) .. (199.37,178.53) -- (142.18,147.19) -- cycle ; \draw  [color={rgb, 255:red, 208; green, 2; blue, 27 }  ,draw opacity=1 ] (81.55,171.22) .. controls (78.82,164.33) and (77.23,156.87) .. (77.01,149.03) .. controls (75.99,113.04) and (104.34,83.03) .. (140.34,82.02) .. controls (176.33,81) and (206.34,109.35) .. (207.35,145.35) .. controls (207.69,157.36) and (204.76,168.7) .. (199.37,178.53) ;  
\draw  [draw opacity=0][dash pattern={on 4.5pt off 4.5pt}] (400.62,127.37) .. controls (429.46,166.95) and (443.21,205.91) .. (431.34,214.62) .. controls (419.39,223.38) and (386.03,198.2) .. (356.83,158.37) .. controls (342.72,139.14) and (332.17,120) .. (326.37,104.37) -- (378.46,142.51) -- cycle ; \draw  [dash pattern={on 4.5pt off 4.5pt}] (400.62,127.37) .. controls (429.46,166.95) and (443.21,205.91) .. (431.34,214.62) .. controls (419.39,223.38) and (386.03,198.2) .. (356.83,158.37) .. controls (342.72,139.14) and (332.17,120) .. (326.37,104.37) ;  
\draw  [draw opacity=0] (326.57,104.91) .. controls (320.18,87.91) and (319.35,74.98) .. (325.59,70.4) .. controls (337.53,61.65) and (370.84,86.78) .. (400.03,126.55) -- (378.46,142.51) -- cycle ; \draw  [color={rgb, 255:red, 208; green, 2; blue, 27 }  ,draw opacity=1 ] (326.57,104.91) .. controls (320.18,87.91) and (319.35,74.98) .. (325.59,70.4) .. controls (337.53,61.65) and (370.84,86.78) .. (400.03,126.55) ;  
\draw  [draw opacity=0][fill={rgb, 255:red, 0; green, 0; blue, 0 }  ,fill opacity=1 ] (396.92,126.55) .. controls (396.92,124.84) and (398.31,123.45) .. (400.03,123.45) .. controls (401.74,123.45) and (403.13,124.84) .. (403.13,126.55) .. controls (403.13,128.27) and (401.74,129.66) .. (400.03,129.66) .. controls (398.31,129.66) and (396.92,128.27) .. (396.92,126.55) -- cycle ;
\draw  [draw opacity=0][fill={rgb, 255:red, 0; green, 0; blue, 0 }  ,fill opacity=1 ] (322.21,70.61) .. controls (322.21,68.9) and (323.6,67.51) .. (325.32,67.51) .. controls (327.03,67.51) and (328.42,68.9) .. (328.42,70.61) .. controls (328.42,72.32) and (327.03,73.71) .. (325.32,73.71) .. controls (323.6,73.71) and (322.21,72.32) .. (322.21,70.61) -- cycle ;
\draw  [dash pattern={on 0.84pt off 2.51pt}]  (378.19,142.14) -- (356.35,157.72) ;
\draw  [dash pattern={on 0.84pt off 2.51pt}]  (378.19,142.14) -- (431.39,214.59) ;
\draw  [draw opacity=0][fill={rgb, 255:red, 0; green, 0; blue, 0 }  ,fill opacity=1 ] (323.47,104.91) .. controls (323.47,103.19) and (324.86,101.8) .. (326.57,101.8) .. controls (328.28,101.8) and (329.67,103.19) .. (329.67,104.91) .. controls (329.67,106.62) and (328.28,108.01) .. (326.57,108.01) .. controls (324.86,108.01) and (323.47,106.62) .. (323.47,104.91) -- cycle ;
\draw  [draw opacity=0][fill={rgb, 255:red, 0; green, 0; blue, 0 }  ,fill opacity=1 ] (195.9,178.53) .. controls (195.9,176.61) and (197.45,175.06) .. (199.37,175.06) .. controls (201.28,175.06) and (202.84,176.61) .. (202.84,178.53) .. controls (202.84,180.44) and (201.28,181.99) .. (199.37,181.99) .. controls (197.45,181.99) and (195.9,180.44) .. (195.9,178.53) -- cycle ;
\draw  [draw opacity=0][fill={rgb, 255:red, 0; green, 0; blue, 0 }  ,fill opacity=1 ] (118.43,85.59) .. controls (118.43,83.67) and (119.98,82.12) .. (121.9,82.12) .. controls (123.81,82.12) and (125.37,83.67) .. (125.37,85.59) .. controls (125.37,87.51) and (123.81,89.06) .. (121.9,89.06) .. controls (119.98,89.06) and (118.43,87.51) .. (118.43,85.59) -- cycle ;
\draw  [dash pattern={on 0.84pt off 2.51pt}]  (142.17,147.84) -- (84.97,117.15) ;
\draw  [draw opacity=0][fill={rgb, 255:red, 0; green, 0; blue, 0 }  ,fill opacity=1 ] (78.08,167.75) .. controls (78.08,165.83) and (79.64,164.28) .. (81.55,164.28) .. controls (83.47,164.28) and (85.02,165.83) .. (85.02,167.75) .. controls (85.02,169.67) and (83.47,171.22) .. (81.55,171.22) .. controls (79.64,171.22) and (78.08,169.67) .. (78.08,167.75) -- cycle ;
\draw    (142.17,147.84) -- (197.61,177.58) ;
\draw [shift={(199.37,178.53)}, rotate = 208.21] [color={rgb, 255:red, 0; green, 0; blue, 0 }  ][line width=0.75]    (10.93,-3.29) .. controls (6.95,-1.4) and (3.31,-0.3) .. (0,0) .. controls (3.31,0.3) and (6.95,1.4) .. (10.93,3.29)   ;
\draw    (378.46,142.51) -- (398.97,128.5) ;
\draw [shift={(400.62,127.37)}, rotate = 145.65] [color={rgb, 255:red, 0; green, 0; blue, 0 }  ][line width=0.75]    (10.93,-3.29) .. controls (6.95,-1.4) and (3.31,-0.3) .. (0,0) .. controls (3.31,0.3) and (6.95,1.4) .. (10.93,3.29)   ;
\draw    (378.46,142.51) -- (326.5,72.22) ;
\draw [shift={(325.32,70.61)}, rotate = 53.53] [color={rgb, 255:red, 0; green, 0; blue, 0 }  ][line width=0.75]    (10.93,-3.29) .. controls (6.95,-1.4) and (3.31,-0.3) .. (0,0) .. controls (3.31,0.3) and (6.95,1.4) .. (10.93,3.29)   ;
\draw  [draw opacity=0][fill={rgb, 255:red, 0; green, 0; blue, 0 }  ,fill opacity=1 ] (138.71,147.19) .. controls (138.71,145.27) and (140.26,143.72) .. (142.18,143.72) .. controls (144.1,143.72) and (145.65,145.27) .. (145.65,147.19) .. controls (145.65,149.11) and (144.1,150.66) .. (142.18,150.66) .. controls (140.26,150.66) and (138.71,149.11) .. (138.71,147.19) -- cycle ;
\draw  [draw opacity=0][fill={rgb, 255:red, 0; green, 0; blue, 0 }  ,fill opacity=1 ] (374.72,142.14) .. controls (374.72,140.22) and (376.27,138.67) .. (378.19,138.67) .. controls (380.1,138.67) and (381.66,140.22) .. (381.66,142.14) .. controls (381.66,144.05) and (380.1,145.6) .. (378.19,145.6) .. controls (376.27,145.6) and (374.72,144.05) .. (374.72,142.14) -- cycle ;

\draw (405.15,111.61) node [anchor=north west][inner sep=0.75pt]    {$v_{i+1}$};
\draw (293.43,101.33) node [anchor=north west][inner sep=0.75pt]    {$v_{i-1}$};
\draw (316.92,47.23) node [anchor=north west][inner sep=0.75pt]    {$v_{i}$};
\draw (206.17,173.73) node [anchor=north west][inner sep=0.75pt]    {$v_{i+1}$};
\draw (54.5,164.23) node [anchor=north west][inner sep=0.75pt]    {$v_{i-1}$};
\draw (112,62.4) node [anchor=north west][inner sep=0.75pt]    {$v_{i}$};
\draw (396.1,137.51) node [anchor=north west][inner sep=0.75pt]    {$\tilde{x}$};
\draw (145.5,99.73) node [anchor=north west][inner sep=0.75pt]    {$\tilde{y}$};
\draw (358.6,90.75) node [anchor=north west][inner sep=0.75pt]    {$\tilde{y}$};
\draw (126.67,147.73) node [anchor=north west][inner sep=0.75pt]    {$p_{0}$};
\draw (367.33,147.07) node [anchor=north west][inner sep=0.75pt]    {$p_{0}$};
\end{tikzpicture}
    \caption{The Coordinate of Circle and Ellipse Local Curve.}
    \label{fig:ArcPara}
\end{figure}
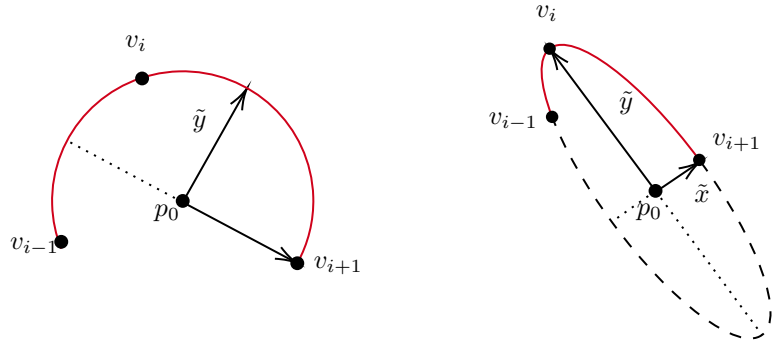

\end{example}

\begin{remark}\label{sec:multiplePTS}
 If the curve has endpoints, then we need a local curve that passes through $\bfv_0$ (and $\bfv_N$). One natural choice is the line segment connecting $\bfv_0$ and $\bfv_1$ ($\bfv_{N-1}$ and $\bfv_N$). Or, if our local curve of $\bfv_1$ also passes $\bfv_0$, we can use the same local curve as $\bfv_1$. 
 If the curve is a closed circle, say $\bfv_0=\bfv_N$, we have to get another local curve at $\bfv_0=\bfv_N$, which may be a parabola pass $\bfv_{N-1}$, $\bfv_0=\bfv_N$, and $\bfv_1$ or other local curves depend on needs. 
 \end{remark}

 \subsection{Redistribution and Quasi-regular Local Curves}
Suppose $\bfv_i$ is a local $G^r$ curve at $\bfv_i$, where $\bff_i(p_i)=\bfv_{i-1}$, $\bff_i(q_i)=\bfv_{i}$, and $\bff_i(r_i)=\bfv_{i+1}$. Redistribution of $\bff_i$ is a re-parameterization of the function by $\bff_i(t)=\bff_i(S_{i}^{-1}(s))$ using  $t=S_{i}^{-1}(s)$, 
where the bijection $S:[p_i,r_i] \to [i-1,i+1]$ with $S_i(p_i)=i-1$, $S_i(q_i)=i$, and $S(r_i)=i$. We call $\bfF_i(s)=\bff_i(S_{i}^{-1}(s))$ quasi-regular function (QR function).
The formal definition of the quasi-regular local parametric function can be described below:
\begin{definition}\label{def:QR localcurve}
 A function $\bfF_i: [i-1,i+1] \to \mathbb{R}^3$ is said to be a local $G^r$ quasi-regular interpolating function of ${\bf P}$ (at point $\bfv_i$) if:
 	\begin{itemize}
		\item ${\bf F}|_{(i-1,i)}$ and ${\bf F}|_{(i,i+1)}$ are regular and $C^r$.
		\item $\lim\limits_{t \to i^+} \nabla {\bf F}(t) \neq \bf 0$ for $t=i-1,i$, and 
		$\lim\limits_{t \to i^-} \nabla {\bf F}(t) \neq \bf 0$ for $t=i,i+1$.
		\item There exists a local re-parameterization $S_i: [q_i-\epsilon,q_i+\epsilon] \to [i-1,i]$ with $S_i(q_i)=i$ such 
		that ${\bf F}(S_i(s))$ is a regular $C^r$ function on $[q_i-\epsilon,q_i+\epsilon] \subset [p_i,r_i]$ for a $\epsilon>0$.
	\end{itemize}
\end{definition}
By condition 1) and 2) in Definition ~\ref{def:QR localcurve}, we can see that $\bfF_i$ is $G^r$ on $[i-1,i+1] \setminus \{ i \}$. By condition 3), we can see that $\bfF_i$ is $G^r$ at $i$. Hence, $\bfF$ is $G^r$ on $[i-1,i+1]$. This is not a surprise if we see that $\bfF_i$ is just the re-parametrize of $\bff_i$. The main problem is to choose the distribution function $S_i$. We see that using a piece-wise linear function will work.
\begin{lemma}\label{constructionFi}
	For a local curve $\bff_i: [p_i,r_i] \to \mathbb{R}^n$ which interpolate $\bfv_{i-1},\bfv_{i}$, and $\bfv_{i+1}$. Suppose $S_i:[p_i,r_i] \rightarrow [i-1,i+1]$ is a piecewise linear function consisting of two line segments such that $S_i(p_i)=i-1, S_i(q_i)=i$ and $S_i(r_i)=i+1$.  Then ${\bf F}_i(t)=\bff_i(S_i^{-1}(t))$ is a 
	$G^{\infty}$  quasi-regular parametric function such that ${\bf F}_i(j)=\bfv_j$ for $j=i-1, i, i+1$.
\end{lemma}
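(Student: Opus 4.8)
The plan is to verify directly that $\bfF_i = \bff_i \circ S_i^{-1}$ satisfies the three bullet points of Definition~\ref{def:QR localcurve} together with the interpolation property, exploiting the explicit piecewise-linear form of $S_i$. First I would write the inverse $S_i^{-1}: [i-1,i+1] \to [p_i,r_i]$ out as
\[
S_i^{-1}(s) = \begin{cases} p_i + (s-(i-1))(q_i - p_i) & s \in [i-1,i], \\ q_i + (s-i)(r_i - q_i) & s \in [i,i+1], \end{cases}
\]
which is continuous, piecewise linear, and strictly increasing with positive slopes $q_i - p_i$ and $r_i - q_i$. Interpolation is then immediate by evaluating at the breakpoints: $\bfF_i(i-1)=\bff_i(p_i)=\bfv_{i-1}$, $\bfF_i(i)=\bff_i(q_i)=\bfv_i$, and $\bfF_i(i+1)=\bff_i(r_i)=\bfv_{i+1}$.

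For the first two conditions I would differentiate on each open subinterval via the chain rule. On $(i-1,i)$ one gets $\bfF_i'(s) = (q_i-p_i)\,\bff_i'(S_i^{-1}(s))$ and on $(i,i+1)$ one gets $\bfF_i'(s) = (r_i-q_i)\,\bff_i'(S_i^{-1}(s))$; since a strictly positive constant times a regular $C^r$ map is again regular and $C^r$, the first condition holds. The one-sided limits required in the second condition are then just these constants times $\bff_i'(p_i)$, $\bff_i'(q_i)$, and $\bff_i'(r_i)$, each of which is nonzero because $\bff_i$ is regular and the slopes are strictly positive.

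The crux is the third condition at the junction $s=i$, where $S_i^{-1}$ has a corner and $\bfF_i$ is in general only $G^1$ there (the two one-sided velocities at $i$ are positive multiples of the same vector $\bff_i'(q_i)$, so the tangent direction matches but its magnitude jumps, and higher derivatives need not agree). The key observation that dispatches this is that the local reparameterization demanded by the definition can be taken to be $S_i$ itself, restricted to $[q_i-\epsilon,q_i+\epsilon]$: then
\[
\bfF_i(S_i(s)) = \bff_i\bigl(S_i^{-1}(S_i(s))\bigr) = \bff_i(s),
\]
which is precisely the original local curve and hence regular $C^r$ — indeed regular $C^\infty$ for the parabola and arc constructions of Examples~\ref{ex:para} and \ref{sec: exp2 Arc}, yielding the stated $G^\infty$ conclusion. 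I expect this third step — recognizing that post-composing $\bfF_i$ with $S_i$ exactly undoes the reparameterization and restores the smooth $\bff_i$ at the junction — to be the main (and essentially the only) conceptual point; everything else is routine bookkeeping of the chain rule on the two linear pieces.
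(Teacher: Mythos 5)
Your proposal is correct and follows essentially the same route as the paper's own proof: chain rule with constant slopes for regularity on the open pieces, existence of one-sided derivative limits, and the key observation that $\bfF_i\circ S_i=\bff_i$ supplies the local reparameterization required by Definition~\ref{def:QR localcurve}, with interpolation by direct substitution. Your version is in fact slightly more careful than the paper's, since you write out $S_i^{-1}$ explicitly and note that the $G^\infty$ conclusion implicitly requires the local curve $\bff_i$ to be $C^\infty$ (as it is for the parabola and arc constructions).
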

\begin{proof}
	First, $S_i(s)$ is a linear function on $s \in [p_i,q_i]$. We have 
	${\bf F}'_i(t)=\bff_i'(S_i^{-1}(t))(S_i^{-1})'(t)$.  
	Hence, ${\bf F}$ is a  regular curve on $(i-1, i)$ because 
	$(S_i^{-1})'(t)$ is a constant and $\bff_i$
	is regular (so that $\bff_i'(S_i^{-1}(t))$ is never zero). Similarly, ${\bf F}$ is a  regular curve on $(i,i+1)$.
	Secondly, one can easily see that ${\bf F}'_i(t)$ exists when $t \to (i-1)_+, i_-, i_+, or (i+1)_-$.
	Finally, it easy to see that ${\bf F}_i(S_i(s))=\bff_i(s)$ and $S_i$ is a regular $C^{\infty}$ function. Thus, ${\bf F}_i(t)$ is a $G^{\infty}$  quasi-regular parametric function.\\
    The interpolation property could be obtained by ${\bf F}_i(i-1)=\bff_i(S_i^{-1}(i-1))=\bff_i(p_i)=\bfv_{i-1}$, ${\bf F}_i(i)=\bff_i(S_i^{-1}(i))=\bff_i(q_i)=\bfv_{i}$, and ${\bf F}_i(i+1)=\bff_i(S_i^{-1}(i+1))=\bff_i(r_i)=\bfv_{i+1}$. 
\end{proof}

Since ${\bf F}$ and $\bff$ only differ in the parameterization, they have the same graph, which is $G^r$. Although ${\bf F}$ loses its regularity at the nodes, it stores the point cloud at integer nodes, which is a more desirable property in computer programming.\\

Before we move on to constrict some local curves, we shall define another important property of these $ G^r$ quasi-regular functions.
\begin{definition}\label{contractness}
 We say a local $G^r$ interpolating QR function $\bfF_i: [i-1,i+1] \to \mathbb{R}^3$, as we defined in Definition ~\ref{def:QR localcurve}, is contracted if the inner products 
 \[ \langle \bfF_i(s)-\bfL_i(s),\bfv_i-\bfv_{i-1} \rangle > 0, \text{ for } s \in (i-1,i) \]
 and 
\[ \langle \bfF_i(s)-\bfL_i(s),\bfv_i-\bfv_{i+1} \rangle > 0, \text{ for } s \in (i,i+1), \]
where $\bfL_i$ is the piecewise linear function with $\bfL_i(i-1)=\bfv_{i-1}$, $\bfL_i(i)=\bfv_{i}$, and $\bfL_i(i+1)=\bfv_{i+1}$
\end{definition}

We illustrated the contracted local curve in Figure ~\ref{fig:contract}. Generally speaking, a parametric function is contracted if it is always closer to $\bfv_i=f(q_i)$ than the linear interpolating on both sides. \\

\begin{figure}
    \centering

\tikzset{every picture/.style={line width=0.75pt}} 

\begin{tikzpicture}[x=0.75pt,y=0.75pt,yscale=-1,xscale=1]

\draw  [dash pattern={on 0.84pt off 2.51pt}]  (155,56) -- (277,135) ;
\draw    (71,111.5) .. controls (103,79.5) and (109,66) .. (155,56) ;
\draw    (155,56) .. controls (207,51) and (224.5,63) .. (256.5,99) ;
\draw  [fill={rgb, 255:red, 0; green, 0; blue, 0 }  ,fill opacity=1 ] (150.5,56) .. controls (150.5,53.51) and (152.51,51.5) .. (155,51.5) .. controls (157.49,51.5) and (159.5,53.51) .. (159.5,56) .. controls (159.5,58.49) and (157.49,60.5) .. (155,60.5) .. controls (152.51,60.5) and (150.5,58.49) .. (150.5,56) -- cycle ;
\draw  [fill={rgb, 255:red, 0; green, 0; blue, 0 }  ,fill opacity=1 ] (54.5,146) .. controls (54.5,143.51) and (56.51,141.5) .. (59,141.5) .. controls (61.49,141.5) and (63.5,143.51) .. (63.5,146) .. controls (63.5,148.49) and (61.49,150.5) .. (59,150.5) .. controls (56.51,150.5) and (54.5,148.49) .. (54.5,146) -- cycle ;
\draw  [fill={rgb, 255:red, 0; green, 0; blue, 0 }  ,fill opacity=1 ] (272.5,135) .. controls (272.5,132.51) and (274.51,130.5) .. (277,130.5) .. controls (279.49,130.5) and (281.5,132.51) .. (281.5,135) .. controls (281.5,137.49) and (279.49,139.5) .. (277,139.5) .. controls (274.51,139.5) and (272.5,137.49) .. (272.5,135) -- cycle ;
\draw  [dash pattern={on 0.84pt off 2.51pt}]  (59,146) -- (155,56) ;
\draw  [fill={rgb, 255:red, 0; green, 0; blue, 0 }  ,fill opacity=1 ] (252,99) .. controls (252,96.51) and (254.01,94.5) .. (256.5,94.5) .. controls (258.99,94.5) and (261,96.51) .. (261,99) .. controls (261,101.49) and (258.99,103.5) .. (256.5,103.5) .. controls (254.01,103.5) and (252,101.49) .. (252,99) -- cycle ;
\draw  [fill={rgb, 255:red, 0; green, 0; blue, 0 }  ,fill opacity=1 ] (66.5,111.5) .. controls (66.5,109.01) and (68.51,107) .. (71,107) .. controls (73.49,107) and (75.5,109.01) .. (75.5,111.5) .. controls (75.5,113.99) and (73.49,116) .. (71,116) .. controls (68.51,116) and (66.5,113.99) .. (66.5,111.5) -- cycle ;
\draw  [fill={rgb, 255:red, 208; green, 2; blue, 27 }  ,fill opacity=1 ] (91.5,112) .. controls (91.5,109.51) and (93.51,107.5) .. (96,107.5) .. controls (98.49,107.5) and (100.5,109.51) .. (100.5,112) .. controls (100.5,114.49) and (98.49,116.5) .. (96,116.5) .. controls (93.51,116.5) and (91.5,114.49) .. (91.5,112) -- cycle ;
\draw  [fill={rgb, 255:red, 74; green, 144; blue, 226 }  ,fill opacity=1 ] (95.5,83) .. controls (95.5,80.51) and (97.51,78.5) .. (100,78.5) .. controls (102.49,78.5) and (104.5,80.51) .. (104.5,83) .. controls (104.5,85.49) and (102.49,87.5) .. (100,87.5) .. controls (97.51,87.5) and (95.5,85.49) .. (95.5,83) -- cycle ;
\draw  [dash pattern={on 0.84pt off 2.51pt}]  (100,83) -- (113,96) ;
\draw    (96,112) -- (99.73,84.98) ;
\draw [shift={(100,83)}, rotate = 97.85] [color={rgb, 255:red, 0; green, 0; blue, 0 }  ][line width=0.75]    (10.93,-3.29) .. controls (6.95,-1.4) and (3.31,-0.3) .. (0,0) .. controls (3.31,0.3) and (6.95,1.4) .. (10.93,3.29)   ;

\draw (61,149.4) node [anchor=north west][inner sep=0.75pt]    {$v_{i-1}$};
\draw (121.5,28.4) node [anchor=north west][inner sep=0.75pt]    {$v_{i} =F( i)$};
\draw (279,142.9) node [anchor=north west][inner sep=0.75pt]    {$v_{i+1}$};
\draw (266,86.4) node [anchor=north west][inner sep=0.75pt]    {$F_{i}( i+1)$};
\draw (14,95.4) node [anchor=north west][inner sep=0.75pt]    {$F_{i}( i-1)$};
\draw (101.5,118.9) node [anchor=north west][inner sep=0.75pt]    {$L_{i}( s)$};
\draw (72.5,54.4) node [anchor=north west][inner sep=0.75pt]    {$F_{i}( s)$};

\end{tikzpicture}

    \caption{The Contracted Local Curve. The red point is $\bfL_i(s)$ and the blue points is $\bfF_i(s)$. Generally speaking, the contracted QR function is always closer to $\bfv_i$ than linear interpolation after projection to the line connecting the vertex.}
    \label{fig:contract}
\end{figure}
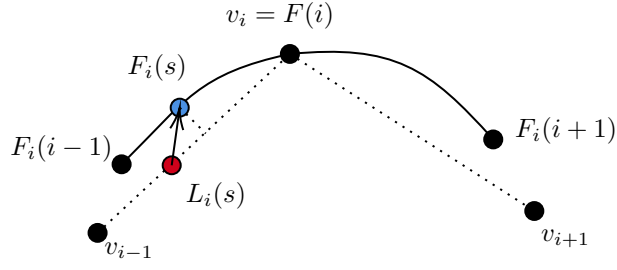

Another important property is the positive definiteness:
\begin{definition}
 We say a local regular (which also imply $C^1$) function $F:[a,b] \to \mathbb{R}^n$ on an interval $(\alpha, \beta) \subset [a,b]$ if $\langle F'(x), F(\beta)-F(\alpha) \rangle >0$ for all $x \in (\alpha, \beta)$.  
\end{definition}
Generally speaking, if a curve is positive definite, then it's target is always toward the endpoint.\\
The contractness and the positive definite are sufficient conditions of the final gluing curve to be $G^r$. Hence, we shall prove our construction of local curves are contracting and positive definite.

\begin{example}[Contractness and Positive definite of Spatial Parabola]\label{exp:pottoreg}
	Let $\bfF_i(s)=\bff_i(S^{-1}_i(s))$, where $\bff_i$ is defined as in Example ~\ref{ex:para}, and $S:[p_i,r_i] \to [i-1,i+1]$ is join of two linear functions with $S_i(p_i)=i-1$, $S_i(q_i)=i$, and $S(r_i)=i$. We can prove that ${\bf F}_i:[i-1,i] \rightarrow \mathbb{R}^3$ is a $C^\infty$ positive definite curve and contracted at $i$, ${\bf F}_i:[i,i+1] \rightarrow \mathbb{R}^3$ is a $C^r$ positive definite curve and contracted at $i$.

	For positive definiteness, we check the graph on $\Tilde{\bf x}, \Tilde{\bf y}$ as in Example ~\ref{ex:para}, where the parabola curve can be described as $(s, Q(s))$ on the coordinate systems $\Tilde{\bf x}, \Tilde{\bf y}$ with origin at $\bfp_0$. Since $Q(q_i)$ is the vertex of the parabola, all the derivative is toward the vertex, i.e. $Q'(s)(Q(q_i)-Q(s))\geq 0$. For $t\in (i-1, i)$, we have

    \begin{align*}
    \langle {\bf F}_i'(t), {\bf F}_i(i)-{\bf F}_i(i-1) \rangle &=\langle \bff_i'(s)(S_i^{-1})'(t), \bff_i(q_i)-\bff_i(p_i) \rangle \\
    &= (S_i^{-1})'(t) \langle Q'(s)\Tilde{\bfy}+\Tilde{\bfx}, \left( Q(q_i)- Q(p_{i})\right)\tilde{\bfy}+ (q_i-p_{i}) \Tilde{\bfx} \rangle  \\
    &= (S_i^{-1})'(t)(Q'(s)\left( Q(q_i)- Q(p_{i})\right)+(q_i-p_{i})) >0.
    \end{align*}

	Note that $S_i^{-1}$ is a linear incresing function on $[i-1,i]$ so $(S_i^{-1})'(t)$ is a positive constant. Similarly $\langle {\bf F}_i'(t), {\bf F}_i(i+1)-{\bf F}_i(i) \rangle = (S_i^{-1})'(t)(Q'(s)\left( Q(r_i)- Q(q_{i})\right)+(r_i-q_{i})) >0$.
 
	For contractness (cf. Definition ~\ref{contractness}), let $t\in (i-1, i)$, since ${L}(t)$ is a linear function from $\bfv_{i-1}$ to $\bfv_{i-1}$ and $S_i$ is a linear function, $ $we have $L(S_i(s))=l(s) \tilde{\bfy}+ s\tilde{\bf x}$, where $l(s)$ is a linear function with $l(p_i)=Q(p_i)$ and $l(q_i)=Q(q_i)$. Now let $t=S_i(s)$ we can calculate
	$\langle {\bf F}_i(t)-{L}(t), {\bf F}_i(i)-{\bf F}_i(i-1) \rangle= \langle f_i(s)-L(S_i(s)), f_i(q_i)-f_i(p_i) \rangle=\langle Q(s)-l(s)\Tilde{\bf y}, \left( Q(q_i)- Q(p_i)\right)\tilde{\bf y}+ (q_i-p_i) \Tilde{\bf x} \rangle=(Q(s)-l(s))\left( Q(q_i)- Q(p_i)\right) \geq 0$. Because if $Q$ is convex, $Q(s) \leq l(s)$ and $Q(q_i) \leq Q(p_i)$ and if $Q$ is concave, $Q(s) \geq l(s)$ and $Q(q_i) \geq Q(p_i)$. Similarly, for $t \in [i, i+1]$.	

\end{example}

\subsection{Blending Functions}\label{sec:blending}
We begin with a construction of blending functions. 
\begin{definition}\label{Def_of_rblending}
	A pair of r-blending polynomials $B_1 ,B_2: [0, 1] \rightarrow \mathbf{R}$ is a polynomial satisfying  the following:
	\begin{itemize}
		\item $1> B_1$, $B_2 >0 $ on $(0,1)$.
		\item $B_1(1)=B_2(0)=0$ and $B_1(0)=B_2(1)=1$.
		\item $B_1^{(\alpha)}(0)=B_1^{(\alpha)}(1)=B_2^{(\alpha)}(0)=B_2^{(\alpha)}(1)=0$ for all $0<\alpha \leq r$.
		\item $B_1+B_2=1$ on $[0,1]$.
		\item $-B_1$ and $B_2$ are non decreasing.
	\end{itemize}    
\end{definition}

An example of a 0-blending function is \\
\begin{equation}
\begin{cases}
B_1(t) &=1-t, \forall t\in [0,1]\\
B_2(t) &=t, \forall t\in [0,1].   
\end{cases}
\end{equation}

An example of a 1-blending function is\\
\begin{equation}
\begin{cases}
B_1(t) &=2t^3-3t^2+1, \forall t\in [0,1]\\
B_2(t) &=-2t^3+3t^2, \forall t\in [0,1]
\end{cases}
\end{equation}

Another example of a 1-blending function is \\
\begin{equation}
\begin{cases}
B_1(t) &=\cos^2(t), \forall t\in [0,1]\\
B_2(t) &=\sin^2(t), \forall t\in [0,1].   
\end{cases}
\end{equation}

We can also generate $\infty$-blending function. let $f(x)=e^{-\frac{1}{x}}$, we define 
\begin{equation}
\begin{cases}
B_1(t) &=\dfrac{f(1-x)}{f(x)+f(1-x)}, \forall t\in [0,1]\\
B_2(t) &=\dfrac{f(x)}{f(x)+f(1-x)}, \forall t\in [0,1].   
\end{cases}
\end{equation}
is not hard to find that $B_1$ $B_2$ satisfying Definition ~\ref{Def_of_rblending}. Principally, this will give you a $G^\infty$ interpolation function if you use these in Algorithm ~\ref{alg: smoothcurve}. However, the derivatives of those blending functions are complex, which may not be suitable for computational manner.

In general, we can generate $r$-blending functions by polynomials $B_1(x)= \displaystyle \int_x^1 ct^r(1-t)^r dt$, 
$B_2(x)= 1-B_1(x)$, where $c= (\int_0^1 t^r(1-t)^r dt)^{-1}$. This is equivalent to $B_1(x)= \displaystyle \sum_{0\leq i \leq r} b_{i,2r+1}(x)$ and $B_2(x)= \displaystyle \sum_{r+1\leq i \leq 2r+1} b_{i,2r+1}(x)$, where $b_{i,2r+1}$ are $i$-th Bernstein basis polynomials of degree $2r+1$. We call this {\bf polynomial r-blending function}. There are still many ways to define r-lending functions.  We leave them to the interested reader. 

\subsection{Linear Gluing of QR Curves and Main Theorem} \label{Glu_2_QPR}
Consider ${\bf F}_i:[i-1,i+1] \to \mathbb{R}^n$ are $G^r$ QR interpolating functions of the data points 
$\{\bfv_i, i=0,1, \cdots, N\}$. Our final glued function $\Gamma:[0,N] \to \mathbb{R}^3$ to be:
\[ 
\Gamma(t)|_{[i,i+1]} ={\bf F}_i(t)B_1(t-i)+{\bf F}_{i+1}(t)B_2(t-i) 
\]
for $i=1,2,\cdots, N-1$. Note that $\bfF_0$ and $\bfF_N$ are boundaries QR function. See Remark ~\ref{sec:multiplePTS}. We will show that $\Gamma$ can be $G^r$ if we carefully choose our ${\bf f}_i$, $S_i$, $B_1$, and $B_2$. Indeed, we have:
\begin{theorem}\label{thm:Main}
	Suppose a set of oriented data point $P=\{ \bfv_0, \bfv_1, \cdots , \bfv_N \}$. We define a curve 
 \[ 
\Gamma(t)|_{[i,i+1]} ={\bf F}_i(t)B_1(t-i)+{\bf F}_{i+1}(t)B_2(t-i) 
\] where:
	\begin{description}
		\item{1)} $\bff_i: [p_i,r_i] \to \mathbb{R}^n$ are $C^r$ local curve which $f_i(q_i)=\bfv_i$ for some $q_i \in (p+i, r_i)$ for $i=1,2, \cdots, N-1$. $\bff_0:[q_0,r_0] \to \mathbb{R}^n$ is a $C^r$ function with $\bff_0(q_0)=\bfv_0$, and $\bff_N:[p_N,q_N] \to \mathbb{R}^n$ is a $C^r$ function with $\bff_N(q_N)=\bfv_N$.
		\item{2)} $\bfF_i=\bff_i\circ S_i^{-1}:[i-1, i+1]\to \mathbb{R}^n$ is positive definite and contracted to $\bfv_i$, where $S^{i}:[p_i,r_i]$ piecewise linear function with $S_i(p_i)=i-1$, $S_i(q_i)=i$, and $S_i(r_i)=i+1$. When $i=0, N$, $S_i$ are linear function where $S_0(q_i)=0$, $S_0(r_i)=1$, $S_N(p_i)=N-1$, and $S_N(q_i)=N$.
        \item{3)} $B_1$, $B_2$ are $r$-blending function.
   \end{description} 
   Then the graph of $\Gamma(t)$ is $G^r$ which $\Gamma(0)=\bfv_0, \Gamma(1)=\bfv_1, \cdots, \Gamma(N)=\bfv_N$.
\end{theorem}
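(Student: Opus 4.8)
The plan is to verify the two defining ingredients of $G^r$ separately on the open pieces $(i,i+1)$ and at the gluing nodes $t=i$, and to treat interpolation as a bookkeeping consequence of the endpoint values of the blending pair. First I would record that since $B_1(0)=B_2(1)=1$ and $B_1(1)=B_2(0)=0$, evaluating the formula at the two ends of each interval gives $\Gamma(i)=\bfF_i(i)=\bfv_i$ and $\Gamma((i+1)^-)=\bfF_{i+1}(i+1)=\bfv_{i+1}$, while the piece on $[i+1,i+2]$ yields the same value from the right; this simultaneously gives the interpolation claim and $C^0$ continuity across the nodes.

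Next I would establish regularity (hence, by Remark~\ref{rem1}, the $G^r$ property) on each open interval $(i,i+1)$. There $\bfF_i|_{[i,i+1]}$ and $\bfF_{i+1}|_{[i,i+1]}$ are each regular $C^r$ (they are the one-sided halves of the QR functions of Definition~\ref{def:QR localcurve}) and $B_1,B_2$ are polynomials, so $\Gamma$ is $C^r$ there; the only thing to rule out is $\Gamma'(t)=0$, which Example~\ref{Weitedsum_of_regular_curve_maynot_regular} warns can genuinely happen for a careless blend. Writing $u=\bfv_{i+1}-\bfv_i$ and using $B_1+B_2=1$ (so $B_2'=-B_1'$), I would expand
\[
\langle \Gamma'(t),u\rangle = B_1\,\langle \bfF_i'(t),u\rangle + B_2\,\langle \bfF_{i+1}'(t),u\rangle + B_1'\,\langle \bfF_i(t)-\bfF_{i+1}(t),u\rangle .
\]
Positive definiteness of $\bfF_i$ and $\bfF_{i+1}$ on $(i,i+1)$ makes the first two inner products strictly positive, and since $B_1,B_2>0$ on $(0,1)$ those two terms are strictly positive. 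For the last term, the piecewise-linear interpolants $\bfL_i$ and $\bfL_{i+1}$ agree on $[i,i+1]$ (both are the segment from $\bfv_i$ to $\bfv_{i+1}$), so contractness (Definition~\ref{contractness}) gives $\langle \bfF_i-\bfL_i,u\rangle<0$ and $\langle \bfF_{i+1}-\bfL_{i+1},u\rangle>0$, whence $\langle \bfF_i-\bfF_{i+1},u\rangle<0$; combined with $B_1'\le 0$ this makes the last term nonnegative. Therefore $\langle \Gamma'(t),u\rangle>0$, so $\Gamma'(t)\neq 0$ and $\Gamma$ is regular on $(i,i+1)$.

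It remains to prove $G^r$ at the nodes $t=i$, which I regard as the technical heart. The idea is that the blending pair switches off to order $r$ at the ends: since $B_1^{(\alpha)}(0)=B_2^{(\alpha)}(0)=0$ and $B_1^{(\alpha)}(1)=B_2^{(\alpha)}(1)=0$ for $1\le\alpha\le r$, a Leibniz expansion of $\Gamma=\bfF_{i-1}B_1+\bfF_i B_2$ (on the left) and of $\Gamma=\bfF_i B_1+\bfF_{i+1}B_2$ (on the right) collapses to its $j=0$ term, giving
\[
\Gamma^{(k)}(i^-)=\bfF_i^{(k)}(i^-), \qquad \Gamma^{(k)}(i^+)=\bfF_i^{(k)}(i^+), \qquad 0\le k\le r .
\]
Thus $\Gamma$ has exactly the one-sided $r$-jets of $\bfF_i$ at $i$. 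I would then reparametrize by the same piecewise-linear $S_i$ used to build $\bfF_i$: because $S_i$ is linear on each side of $q_i$, the one-sided derivatives of $\Gamma\circ S_i$ at $q_i$ equal $\Gamma^{(k)}(i^\pm)$ times a pure power of the (constant) one-sided slope, i.e. exactly the one-sided derivatives of $\bff_i=\bfF_i\circ S_i$ at $q_i$. Since $\bff_i$ is genuinely regular $C^r$, these one-sided derivatives match across $q_i$ and the first one is nonzero, so $\Gamma\circ S_i$ is a regular $C^r$ parametrization of $\Gamma$ near $\bfv_i$, which is precisely $G^r$ at the node in the sense of Definition~\ref{GrCUrve}.

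The step I expect to be the real obstacle is the strict inequality $\langle \Gamma'(t),u\rangle>0$ on the open pieces: making the signs line up forces one to use \emph{both} hypotheses at once—positive definiteness to control the $B_1\bfF_i'+B_2\bfF_{i+1}'$ part, and contractness (through the cancellation of the common linear interpolant) to control the $B_1'(\bfF_i-\bfF_{i+1})$ part—and Example~\ref{Weitedsum_of_regular_curve_maynot_regular} shows the conclusion is false if either is dropped. The node analysis, by contrast, is essentially a jet computation once one notices that the vanishing of the blending derivatives reduces everything to $\bfF_i$ and that $S_i$ restores the regularity already present in $\bff_i$.
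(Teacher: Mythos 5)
Your proposal is correct and follows essentially the same route as the paper's own proof: interpolation by evaluating the blending pair at the endpoints, regularity on each open interval $(i,i+1)$ by showing $\langle \Gamma'(t),\bfv_{i+1}-\bfv_i\rangle>0$ using positive definiteness together with contractness (with the common linear interpolant cancelling), and $G^r$ at the nodes by exhibiting $\Gamma\circ S_i$ as a regular $C^r$ parametrization via the vanishing of the blending derivatives. The only cosmetic difference is that you compute the one-sided jets of $\Gamma$ at $t=i$ and then pull back through the piecewise-linear $S_i$, whereas the paper differentiates $\bfg_i=\Gamma\circ S_i$ directly at $q_i$; the logical content is identical.
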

We divided the proof into three claims:
\begin{claim}\label{clm1}
    The $\Gamma(t)$ defines in Theorem ~\ref{thm:Main} interpolate the data points $P$.
\end{claim}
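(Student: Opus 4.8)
The plan is to exploit two elementary facts and then evaluate $\Gamma$ at each integer node. First, from Definition~\ref{Def_of_rblending} the blending functions take the boundary values $B_1(0)=1$, $B_1(1)=0$, $B_2(0)=0$, and $B_2(1)=1$. Second, from Lemma~\ref{constructionFi} each interior quasi-regular curve satisfies $\bfF_i(j)=\bfv_j$ for $j=i-1,i,i+1$; in particular $\bfF_i(i)=\bfv_i$. For the boundary pieces, the hypotheses of Theorem~\ref{thm:Main} specify linear reparametrizations $S_0$ and $S_N$ with $S_0(q_0)=0$ and $S_N(q_N)=N$, so that $\bfF_0(0)=\bff_0(q_0)=\bfv_0$ and $\bfF_N(N)=\bff_N(q_N)=\bfv_N$.

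With these in hand, I would evaluate $\Gamma$ at each integer node. For an interior node $t=i$ with $1\le i\le N-1$, I read $\Gamma$ off the piece defined on $[i,i+1]$ and substitute $t=i$; since $B_1(0)=1$ and $B_2(0)=0$, all but the first term drops out and
\[
\Gamma(i)=\bfF_i(i)B_1(0)+\bfF_{i+1}(i)B_2(0)=\bfF_i(i)=\bfv_i.
\]
The boundary nodes are handled identically, reading $\Gamma$ off the pieces on $[0,1]$ and on $[N-1,N]$ and using the two facts $\bfF_0(0)=\bfv_0$ and $\bfF_N(N)=\bfv_N$ established above: namely $\Gamma(0)=\bfF_0(0)B_1(0)+\bfF_1(0)B_2(0)=\bfF_0(0)=\bfv_0$, and $\Gamma(N)=\bfF_{N-1}(N)B_1(1)+\bfF_N(N)B_2(1)=\bfF_N(N)=\bfv_N$.

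The only point that deserves care is that each interior node $t=i$ lies in the closure of the two adjacent gluing intervals $[i-1,i]$ and $[i,i+1]$, so the piecewise definition must assign it a single value. I would confirm well-definedness by also evaluating the piece on $[i-1,i]$ at $t=i$: there $B_1(1)=0$ and $B_2(1)=1$ force $\Gamma(i)=\bfF_{i-1}(i)B_1(1)+\bfF_i(i)B_2(1)=\bfF_i(i)=\bfv_i$, which matches the value computed from the right-hand piece. Since this consistency check is the whole remaining content of the claim, there is no genuine obstacle here; the argument is a direct substitution, and it is precisely the matching endpoint values of the blending functions that make both the interpolation and the agreement at shared nodes automatic.
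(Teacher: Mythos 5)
Your proof is correct and follows essentially the same route as the paper's one-line argument: direct substitution at the integer nodes using $B_1(0)=1$, $B_2(0)=0$, and $\bfF_i(i)=\bff_i(S_i^{-1}(i))=\bff_i(q_i)=\bfv_i$. Your added consistency check at shared nodes and explicit treatment of the boundary pieces are sound refinements the paper omits; the only nitpick is that citing Lemma~\ref{constructionFi} for $\bfF_i(j)=\bfv_j$ at $j=i\pm 1$ overstates what Theorem~\ref{thm:Main}'s hypotheses guarantee (condition~1 only requires $\bff_i(q_i)=\bfv_i$), but the single fact you actually use, $\bfF_i(i)=\bfv_i$, holds regardless.
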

\begin{proof}
    The simple substitution $\Gamma(i)={\bf F}_i(i)B_1(0)+{\bf F}_{i+1}(i)B_2(0)= {\bf F}_i(i)=\bff_i(S_i^{-1}(i))=\bff_i(q_i)=\bfv_i$.
\end{proof}
\begin{claim}\label{clm2}
    The graph of $\Gamma(t)$ is $G^r$ at integer points.
\end{claim}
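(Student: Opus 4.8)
The plan is to show that at each interior integer node $i$ the curve $\Gamma$ has exactly the same one-sided derivatives, up to order $r$, as the quasi-regular function $\bfF_i$, and then to inherit the $G^r$ property of $\Gamma$ at $i$ from that of $\bfF_i$, which is $G^r$ at $i$ by condition 3) of Definition~\ref{def:QR localcurve}.

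First I would analyze $\Gamma$ from the right of $i$. On $[i,i+1]$ we have $\Gamma(t)=\bfF_i(t)B_1(t-i)+\bfF_{i+1}(t)B_2(t-i)$; substituting $u=t-i$ and applying the Leibniz rule to each product gives, for $0\le k\le r$,
\[
\Gamma^{(k)}(i^+)=\sum_{j=0}^{k}\binom{k}{j}\bfF_i^{(j)}(i^+)B_1^{(k-j)}(0)+\sum_{j=0}^{k}\binom{k}{j}\bfF_{i+1}^{(j)}(i^+)B_2^{(k-j)}(0).
\]
Because $B_1^{(\alpha)}(0)=B_2^{(\alpha)}(0)=0$ for every $0<\alpha\le r$, together with $B_1(0)=1$ and $B_2(0)=0$, all terms collapse except the $j=k$ term of the first sum, yielding $\Gamma^{(k)}(i^+)=\bfF_i^{(k)}(i^+)$. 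The same computation on $[i-1,i]$, evaluated at the right endpoint $v=1$ where $B_1(1)=0$, $B_2(1)=1$ and $B_1^{(\alpha)}(1)=B_2^{(\alpha)}(1)=0$ for $0<\alpha\le r$, gives $\Gamma^{(k)}(i^-)=\bfF_i^{(k)}(i^-)$. Here the one-sided derivatives $\bfF_i^{(k)}(i^\pm)$ exist by condition 1) of Definition~\ref{def:QR localcurve}, and $\bfF_i'(i^\pm)\ne 0$ by condition 2), so $\Gamma$ is regular from each side at $i$. (The boundary nodes $i=0,N$ are analogous, using the one-sided gluing of Remark~\ref{sec:multiplePTS}.)

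Having matched jets, I would conclude as follows. Geometric continuity at a junction is an infinitesimal property determined entirely by the two one-sided $r$-jets there: a pair of pieces that are regular and $C^r$ on each side joins with $G^r$ continuity precisely when there is an admissible reparametrization relating the one-sided derivative data, i.e. when the compatibility conditions on $(\Gamma^{(k)}(i^-))_{0\le k\le r}$ and $(\Gamma^{(k)}(i^+))_{0\le k\le r}$ hold. Since $\bfF_i$ is $G^r$ at $i$ by condition 3) of Definition~\ref{def:QR localcurve}, these conditions hold for the jets of $\bfF_i$; because the jets of $\Gamma$ coincide with those of $\bfF_i$, they hold for $\Gamma$ as well, so $\Gamma$ admits a regular $C^r$ reparametrization near $i$ and is therefore $G^r$ at $i$ in the sense of Definition~\ref{GrCUrve}.

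I expect the main obstacle to be making this inheritance step fully rigorous. The cleanest route is to take the local reparametrization $S_i$ supplied by condition 3) for $\bfF_i$ and verify that the same $S_i$ (or a one-sided analogue glued at the node) renders $\Gamma\circ S_i$ regular and $C^r$ across $i$; the jet identity $\Gamma^{(k)}(i^\pm)=\bfF_i^{(k)}(i^\pm)$ is exactly what is needed for the order-$r$ Taylor data of $\Gamma\circ S_i$ to agree from both sides at the junction. Care is required because $\Gamma$ and $\bfF_i$ agree only to order $r$ at the single point $i$ and not on a full neighborhood, so one must argue at the level of $r$-jets rather than of functions, and one must separately confirm that the chosen reparametrization remains a genuine bijection, with derivative bounded away from zero, on a two-sided neighborhood of the node.
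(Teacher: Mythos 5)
Your proposal is correct and follows essentially the same route as the paper: the heart of both arguments is the Leibniz-rule computation in which the vanishing of $B_1^{(\alpha)}$ and $B_2^{(\alpha)}$ at the endpoints kills every cross term, so that $\Gamma$ inherits the $r$-jet of the local curve at the node, after which the redistribution map $S_i$ from condition 3) of Definition~\ref{def:QR localcurve} supplies the regular $C^r$ reparametrization demanded by Definition~\ref{GrCUrve}. The only difference is organizational: the paper composes with $S_i$ first and computes the jets of $\bfg_i=\Gamma\circ S_i$ directly against those of the smooth local function $\bff_i$, whereas you match jets in the $t$-parameter first and then transfer $G^r$ through $S_i$, which amounts to the same calculation.
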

\begin{proof}
    Consider $\Gamma_i:=\Gamma|_{(i-1,i+1)}$ for $i=1,2,\cdots, N-1$. We want to show that $\Gamma$ is $G^r$ at $i$. We can consider a re-parameterization  $\bfg_i=\Gamma_i \circ S_i : [p_i,r_i] \to \mathbb{R}^n$. We can show $\bfg_i^{(\alpha)}(q_i)=\bff_i^{(\alpha)}(q_i)$, for $1<\alpha \leq r$, by calculate:
    \begin{eqnarray*}
		\lim\limits_{s \to q_i^+} \frac{d^\alpha}{(ds)^\alpha} \bfg_i(s)
		&=& \lim\limits_{s \to q_i^+} \dfrac{d^\alpha}{(ds)^\alpha} \Bigl( \bfF_i(S_i(s))B_1(S_i(s)-i)+\bfF_{i+1}(S_i(s))B_2(S_i(s)-i\Bigr)\\
        &=& \lim\limits_{s \to q_i^+} \dfrac{d^\alpha}{(ds)^\alpha}\Bigl( \bff_i(S_i^{-1} \circ S_i(s))B_1(S_i(s)-i )\\
        && \qquad +\bff_{i+1}(S_{i+1}^{-1} \circ S_i(s))B_2(S_i(s)-i )\Bigr)\\
        &=& \lim\limits_{s \to q_i^+} \dfrac{d^\alpha}{(ds)^\alpha}\Bigl( \bff_i(s)B_1(S_i(s)-i)+\bff_{i+1}(S_{i+1}^{-1} \circ S_i(s))B_2(S_i(s)-i)\Bigr)\\
        &=& \bff_i^{(\alpha)}(q_i) +\lim\limits_{s \to q_i^+} \displaystyle\sum_{\substack{m+n=\alpha\\n>1}} \bff_i^{(m)}(s)\dfrac{d^n}{(ds)^n}B_1(S_i(s)-i)\\
        &&\qquad \quad+\lim\limits_{s \to q_i^+} \displaystyle\sum_{m+n=\alpha} (\dfrac{d^m}{(ds)^m} \bff_{i+1}(S_{i+1}^{-1} \circ S_i(s)))(\dfrac{d^n}{(ds)^n}B_2(S_i(s)-i)).
	\end{eqnarray*}
    For the second and third term, Note that $S_i|_{(q_i,r_i)}$ are a linear function to $(i, i+1)$, so $(S_i(s)-i)$ has range on $(0,1)$ when $s \in (q_i,r_i)$. We have  $\frac{d^n}{(ds)^n}B_1(S_i(s)-i)=0$ for $1\leq n \leq \alpha$ and $\frac{d^n}{(ds)^n}B_2(S_i(s)-i)=0$ for $0 \leq \alpha$, since $B_1$ and $B_2$ are $r$-blending function and $\alpha \leq r$. This leads to the conclusion that the second term is vanish. For the third term, we know the derivative of $B_2$ is zeros but we need to check if $\dfrac{d^m}{(ds)^m} \bff_{i+1}(S_{i+1}^{-1} \circ S_i(s))$ is exist and finite. Note that $S_{i+1}^{-1} \circ S_i|_{(q_i,r_i)}$ is a linear function from $(q_i,r_i)$ to $(p_{i+1},q_{i+1})$. We can conclude that $\dfrac{d^m}{(ds)^m} \bff_{i+1}(S_{i+1}^{-1} \circ S_i(s))$ is exist and finite. Hence, the second and third terms all vanish and 
    $$\lim\limits_{s \to q_i^+} \frac{d^\alpha}{(ds)^\alpha} \bfg_i(s) = \bff_i^{(\alpha)}(q_i).$$
    Similar to the left-hand limit, we have $\bfg_i^{(\alpha)}(q_i)=\bff_i^{(\alpha)}(q_i)$ for $0 \leq \alpha \leq r$. Therefore, $\bfg$ is regular $C^r$ function (so $G^r$) near $\bfv_i$. $\Gamma$ is $G^r$ near $\bfv_i$ because it and $\bfg_i$ have same graph. 
    
\end{proof}

\begin{claim}\label{clm3}
    The graph of $\Gamma(t)$ is $G^r$ on intervals $(i,i+1)$ for $i=0,1,\cdots,N-1$.
\end{claim}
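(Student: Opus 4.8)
The plan is to establish $G^r$ on each open interval $(i,i+1)$ by exhibiting $\Gamma$ restricted there as a \emph{regular} $C^r$ parametrization and then invoking the Proposition that a regular $C^r$ function has a $G^r$ graph. First I would observe that no reparametrization node lies in the interior of $(i,i+1)$: the QR function $\bfF_i$ has its only node at $i$ and $\bfF_{i+1}$ has its only node at $i+1$, so by the first bullet of Definition~\ref{def:QR localcurve} both $\bfF_i|_{(i,i+1)}$ and $\bfF_{i+1}|_{(i,i+1)}$ are regular and $C^r$. Since $B_1,B_2$ are $C^\infty$ polynomials, $\Gamma|_{(i,i+1)}=\bfF_iB_1(\cdot-i)+\bfF_{i+1}B_2(\cdot-i)$ is automatically $C^r$, so the entire content of the claim is the \textbf{regularity} $\Gamma'(t)\neq 0$.

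Second, I would differentiate and use $B_1+B_2\equiv 1$, hence $B_2'=-B_1'$, to write
\[
\Gamma'(t)=B_1(t-i)\,\bfF_i'(t)+B_2(t-i)\,\bfF_{i+1}'(t)+B_1'(t-i)\bigl(\bfF_i(t)-\bfF_{i+1}(t)\bigr).
\]
To show this vector is nonzero I would pair it with the single fixed direction $\bfd=\bfv_{i+1}-\bfv_i\neq\mathbf 0$ and prove $\langle\Gamma'(t),\bfd\rangle>0$ for every $t\in(i,i+1)$, which forces $\Gamma'(t)\neq\mathbf 0$. Positive definiteness of $\bfF_i$ and of $\bfF_{i+1}$ on $(i,i+1)$ gives $\langle\bfF_i'(t),\bfd\rangle>0$ and $\langle\bfF_{i+1}'(t),\bfd\rangle>0$, since $\bfF_i(i)=\bfF_{i+1}(i)=\bfv_i$ and $\bfF_i(i+1)=\bfF_{i+1}(i+1)=\bfv_{i+1}$; as $B_1,B_2>0$ on $(0,1)$ with $B_1+B_2=1$, the first two summands together contribute a strictly positive amount. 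For the cross term I would note that the two piecewise-linear interpolants $\bfL_i,\bfL_{i+1}$ coincide on $(i,i+1)$ with the segment $\bfL(s)=\bfv_i+(s-i)\bfd$; contractness then supplies $\langle\bfF_i-\bfL,\bfv_i-\bfv_{i+1}\rangle>0$ and $\langle\bfF_{i+1}-\bfL,\bfv_{i+1}-\bfv_i\rangle>0$, i.e. $\langle\bfF_i-\bfL,\bfd\rangle<0$ and $\langle\bfF_{i+1}-\bfL,\bfd\rangle>0$, whence $\langle\bfF_i(t)-\bfF_{i+1}(t),\bfd\rangle<0$. Because $B_1$ is non-increasing, $B_1'\le 0$, so the cross term pairs with $\bfd$ to a nonnegative quantity. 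Summing the three contributions, the first two strictly positive and the third nonnegative, yields $\langle\Gamma'(t),\bfd\rangle>0$, hence $\Gamma|_{(i,i+1)}$ is regular $C^r$ and therefore $G^r$. The boundary intervals $i=0$ and $i=N-1$ follow from the identical computation, now invoking the one-sided positive definiteness and contractness of the boundary QR functions $\bfF_0,\bfF_N$ granted by hypothesis 2).

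The main obstacle is precisely the phenomenon flagged in Example~\ref{Weitedsum_of_regular_curve_maynot_regular}: a convex combination of two regular curves can lose regularity, so the difficulty is not the $C^r$ smoothness but producing a direction against which $\Gamma'$ stays uniformly positive. The delicate step is controlling the sign of the cross term $B_1'(\bfF_i-\bfF_{i+1})$, which is exactly where contractness is indispensable; without it, $\bfF_i-\bfF_{i+1}$ could align with $\bfd$ in the wrong sense and cancel the positive derivative contributions, reproducing the cusp of Example~\ref{Weitedsum_of_regular_curve_maynot_regular}.
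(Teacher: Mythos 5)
Your proof is correct and follows essentially the same route as the paper's: reduce the claim to regularity, expand $\Gamma'$ using $B_1'=-B_2'$, and test the derivative against the chord direction $\bfv_{i+1}-\bfv_i$, with positive definiteness handling the two derivative terms and contractness (via the common linear interpolant $\bfL$ on $[i,i+1]$) forcing the cross term to pair nonnegatively. The only differences are cosmetic: you carry the cross term with $B_1'$ where the paper writes $B_2'$, and you make explicit endpoint identifications (e.g.\ $\bfF_i(i+1)=\bfv_{i+1}$) that the paper leaves implicit.
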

\begin{proof}
    By definition of $\Gamma$, is not hard to see that $\Gamma$ is a $C^r$ function on $(i,i+1)$. For $G^r$, we only need to show that $\Gamma$ is regular, that is, $\Gamma'(t) \neq 0$.
    We can calculate:\\
	\begin{equation}\label{Sipt}
	\begin{aligned}
	{\bf \Gamma}'(t)|_{(i, i+1)} &=  {\bf F}'_i(t)B_1(t-i)+{\bf F}'_{i+1}(t)B_2(t-i)\\
	&+  {\bf F}_i(t)B'_1(t-i)+{\bf F}_{i+1}(t) B'_2(t-i)\\
    &={\bf F}'_i(t)B_1(t-i)+{\bf F}'_{i+1}(t)B_2(t-i)+\bigl({\bf F}_{i+1}(t)-{\bf F}_i(t)\bigr) B'_2(t-i)
	\end{aligned}
	\end{equation}
    Note that in the last line, we use $B'_1=(1-B_2)'=-B'_2$. Now consider the inner product $\langle {\bf \Gamma}'(t), {\bf \Gamma}(i+1)-{\bf \Gamma}(i) \rangle$. We have $\langle {\bf F}'_i(t), {\bf \Gamma}(i+1)-{\bf \Gamma}(i) \rangle$ and $\langle {\bf F}'_{i+1}(t), {\bf \Gamma}(i+1)-{\bf \Gamma}(i) \rangle >0$ because ${\bf F}_i$ and ${\bf F}_{i+1}$ are positive definite function. For the last component in \ref{Sipt}, we consider $\bfL: [i, i+1] \rightarrow \mathbb{R}^3$ be a linear function with $\bfL(i)={\bf \Gamma}(i)=\bfv_i$ and $\bfL(i+1)={\bf \Gamma}(i+1)=\bfv_{i+1}$. We can calculate
	\begin{align*}
	&\langle \bigl({\bf F}_{i+1}(t)-{\bf F}_i(t)\bigr) B'_2(t-i), {\bf \Gamma}(i+1)-{\bf \Gamma}(i) \rangle \\
	=&\langle ({\bf F}_{i+1}(t)-\bfL(t))-({\bf F}_{i}(t)-\bfL(t)), \bfv_{i+1} -\bfv_i \rangle \cdot B'_2(t-i)\\
	=&\Bigl( \bigl\langle {\bf F}_{i+1}(t)-\bfL(t), \bfv_{i+1}-\bfv_i \bigr\rangle + 
       \bigl\langle {\bf F}_{i}(t)-\bfL(t), \bfv_i-\bfv_{i+1} \bigr\rangle \Bigr) \cdot B'_2(t-i) \geq 0.
	\end{align*}
	Where we apply the contractness of $\bfF_i$ at $\bfv_i$ and $\bfF_{i+1}$ at $\bfv_{i+1}$ in the last line. We have ${\bf \Gamma}(t)$ is a positive definite on $(i, i+1)$, so it is regular because the first derivative never vanishes.
\end{proof}

By the claim \ref{clm1}, \ref{clm2}, and \ref{clm1}, the Theorem ~\ref{thm:Main} is proved.

\begin{corollary}[Bonus Smoothness]\label{Coro:BonusS}
If the local function $\bff_i$ in Theorem ~\ref{thm:Main} is $C^{r+1}$ and pass three point $\bff_i(p_i)=\bfv_{i-1}$, $\bff_i(q_i)=\bfv_{i}$, and $\bff_i(r_i)=\bfv_{i+1}$, and the $r$-blending function are differentiable up to $r+1$ (The derivative still vanish up to $r$). Then the curve $\Gamma$ is $G^{r+1}$.
\end{corollary}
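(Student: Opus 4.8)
The plan is to revisit the three claims underlying Theorem~\ref{thm:Main} and locate precisely where one extra order of smoothness is gained. On the open intervals $(i,i+1)$ the argument of Claim~\ref{clm3} is purely geometric: it invokes only positive definiteness and contractedness of the $\bfF_i$ to conclude $\langle \Gamma'(t),\Gamma(i+1)-\Gamma(i)\rangle>0$, hence regularity, and this uses no smoothness beyond $C^1$. Since the new hypotheses make $\bff_i$ (and therefore $\bfF_i$) of class $C^{r+1}$ and the blending functions $C^{r+1}$, the glued $\Gamma$ is automatically $C^{r+1}$ on each open interval; combined with the unchanged regularity estimate this already gives $G^{r+1}$ away from the nodes. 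The whole issue is thus concentrated at the integer points.

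At a node $i$ I would reuse the reparametrization $\bfg_i=\Gamma_i\circ S_i$ from Claim~\ref{clm2} and push the same Leibniz expansion one order further, to $\alpha=r+1$. Since $S_i$ is linear on each of $(p_i,q_i)$ and $(q_i,r_i)$, every derivative of $B_j(S_i(s)-i)$ collapses to a constant factor $(S_i')^n$ times $B_j^{(n)}$, so as $s\to q_i^+$ the only surviving contributions are the $n=0$ term, which reproduces $\bff_i^{(r+1)}(q_i)$, and a single new $n=r+1$ term; for $1\le n\le r$ the blending derivatives $B_j^{(n)}(0)$ still vanish exactly as in Claim~\ref{clm2}. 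The crux is therefore to control the $n=r+1$ contribution
\[
(S_i')^{r+1}\Bigl( B_1^{(r+1)}(0)\,\bff_i(q_i) + B_2^{(r+1)}(0)\,\bff_{i+1}(p_{i+1}) \Bigr),
\]
which no longer vanishes termwise, because an $r$-blending function need not have vanishing $(r+1)$-st derivative.

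This is exactly where the extra hypothesis enters. Using $B_1+B_2=1$, hence $B_1^{(r+1)}=-B_2^{(r+1)}$, the two terms combine into $(S_i')^{r+1}B_1^{(r+1)}(0)\bigl(\bff_i(q_i)-\bff_{i+1}(p_{i+1})\bigr)$. Now $\bff_i(q_i)=\bfv_i$ by interpolation, and because $\bff_{i+1}$ is assumed to pass through all three of its points we also have $\bff_{i+1}(p_{i+1})=\bfv_i$, so the bracket is the zero vector and the offending term disappears. The symmetric computation for $s\to q_i^-$ uses the $[i-1,i]$ piece together with the matching $\bff_{i-1}(r_{i-1})=\bfv_i$, again killing the new term. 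Thus $\bfg_i^{(\alpha)}(q_i^+)=\bfg_i^{(\alpha)}(q_i^-)=\bff_i^{(\alpha)}(q_i)$ for all $0\le\alpha\le r+1$, so $\bfg_i$ is a regular $C^{r+1}$ parametrization near $q_i$ and $\Gamma$ is $G^{r+1}$ at $\bfv_i$.

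The main obstacle I anticipate is bookkeeping rather than conceptual: one must verify that the $n=r+1$ term is genuinely the only new contribution (that no $1\le n\le r$ term secretly survives) and that the mixed derivatives $\frac{d^m}{ds^m}\bff_{i+1}(S_{i+1}^{-1}\circ S_i(s))$ remain finite at $q_i$, both of which follow from linearity of the transition maps exactly as in Claim~\ref{clm2}. The single clean identity $B_1^{(r+1)}=-B_2^{(r+1)}$, paired with the vertex-matching of $\bff_{i\pm1}$ at $\bfv_i$, does all the real work.
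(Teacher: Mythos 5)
Your proof is correct and rests on the same key cancellation as the paper's: at order $r+1$ the only term that does not vanish termwise is the $(r+1)$-st blending derivative multiplying $\bfF_{i+1}-\bfF_i$ (equivalently, after reparametrization, $B_1^{(r+1)}$ times $\bff_i(q_i)-\bff_{i+1}(p_{i+1})$), and this dies at the node precisely because the three-point hypothesis forces both local curves to pass through $\bfv_i$. The only difference is one of execution: the paper differentiates Equation~\eqref{Sipt} directly in the glued parameter $t$ and matches one-sided limits of $\Gamma^{(r+1)}$ with $\bfF_i^{(r+1)}$, whereas you run the Leibniz expansion for $\bfg_i=\Gamma_i\circ S_i$ in the local parameter $s$ (extending Claim~\ref{clm2} to $\alpha=r+1$), which is in fact slightly more faithful to Definition~\ref{GrCUrve}, since $\Gamma$ itself need not be $C^{r+1}$ in $t$ across $t=i$ owing to the kink of the piecewise-linear $S_i$, so the regular $C^{r+1}$ parametrization must ultimately be exhibited in the $s$-variable anyway.
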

\begin{proof}
    We already show the curve $\Gamma$ is regular. We only need to show that the $r+1$-th derivative is equal to the Local QR function on its boundary. From Equation ~\ref{Sipt}, we have
    \begin{equation}
	\begin{aligned}
	 {\bf \Gamma}^{(r+1)}(t)|_{(i, i+1)} &=\Bigl( \sum_{\substack{m+n=r+1\\n<r+1}} {\bf F}^{(m)}_i(t)B_1^{(n)}(t-i)+{\bf F}^{(m)}_{i+1}(t)B_2^{(n)}(t-i) \Bigr)\\
     &+\bigl({\bf F}_{i+1}(t)-{\bf F}_i(t)\bigr) B^{(r+1)}_2(t-i).
	\end{aligned}
	\end{equation}
When $t \to i$, the first term equal to $F^{r+1}_i$ because $B_1(0)=1$ and the rest terms of $B^{(n)}_1$, $B^{(n)}_2$ goes to zeros. For the second term, although $B^{(r+1)}_2(t-i)$ is not zeros, ${\bf F}_{i+1}(i)-{\bf F}_i(i)=\bfv_i-\bfv_i=0$. Hence ${\bf \Gamma}^{(r+1)}(t)=\bfF^{(r+1)}_i(t)$ when $t \to i^+$. Similarly, ${\bf \Gamma}^{(r+1)}(t)=\bfF^{(r+1)}_{i=1}(t)$ for $t \to i+1^-$.
 
\end{proof}

\section{Numerical Examples of 3D Smooth Curves}\label{Sec:Numerical}
In this section, we present several examples to demonstrate that the method discussed in the previous section can be used to construct
smooth 3D curves.  We begin with our computational algorithm below, which has been implemented in MATLAB.


\begin{algorithm}
\caption{The Curve Interpolation Algorithm. (See Figure ~\ref{Fig:alg}.)}\label{alg: smoothcurve}
\textbf{Step 1} For each point $\bfv_i$, use $\bfv_{i-1}$ and $\bfv_{i+1}$ to generate ${\bff_i}$ as in Definition ~\ref{def:localcurve} for $i=1,2,\cdots,N-1$.\\
\textbf{Step 1.5} For boundary point $\bfv_0$ and $\bfv_N$, constructing the local curve ${\bff_0}$ and ${\bff_N}$. Some canonical choices are:\\
\indent \textbf{Linear} Using linear function ${\bff_0}$ connect $\bfv_0$ and $\bfv_1$, and ${\bff_N}$ connect $\bfv_{N-1}$ and $\bfv_{N}$.\\
\indent \textbf{Natural} If local function $\bff_1$ pass $\bfv_0$ and $\bff_{N-1}$, simply use them as the ${\bff_0}$ and ${\bff_N}$.\\
\indent \textbf{Close} If $\bfv_N=\bfv_0$, means the data points is a close curve, we can define $\bff_N=\bff_0$ be the local function using $\bfv_{N-1}$,$\bfv_{N}=\bfv_0$, and $\bfv_1$.\\ 
\textbf{Step 2} Redistribution the local function $F_i=f_i(S_i^{-1})$.\\
\textbf{Step 3} Given smoothness $r\ge 1$, choose a suitable blending function $B_1$, $B_2$.\\
\textbf{Step 4} For each interval $[i-1, i]$, evaluate the function  ${\bff_{i-1}}(S^{-1}(x)) B_1(x) + {\bff_i}(S^{-1}(\bfx)) B_2(x)$.\\
\end{algorithm}

In addition, we present a computational flowchart to illustrate the steps of Algorithm~\ref{alg: smoothcurve} in Figure~\ref{Fig:alg}.  

\newpage
\begin{figure}
\centering       
\includegraphics[width=\textwidth]{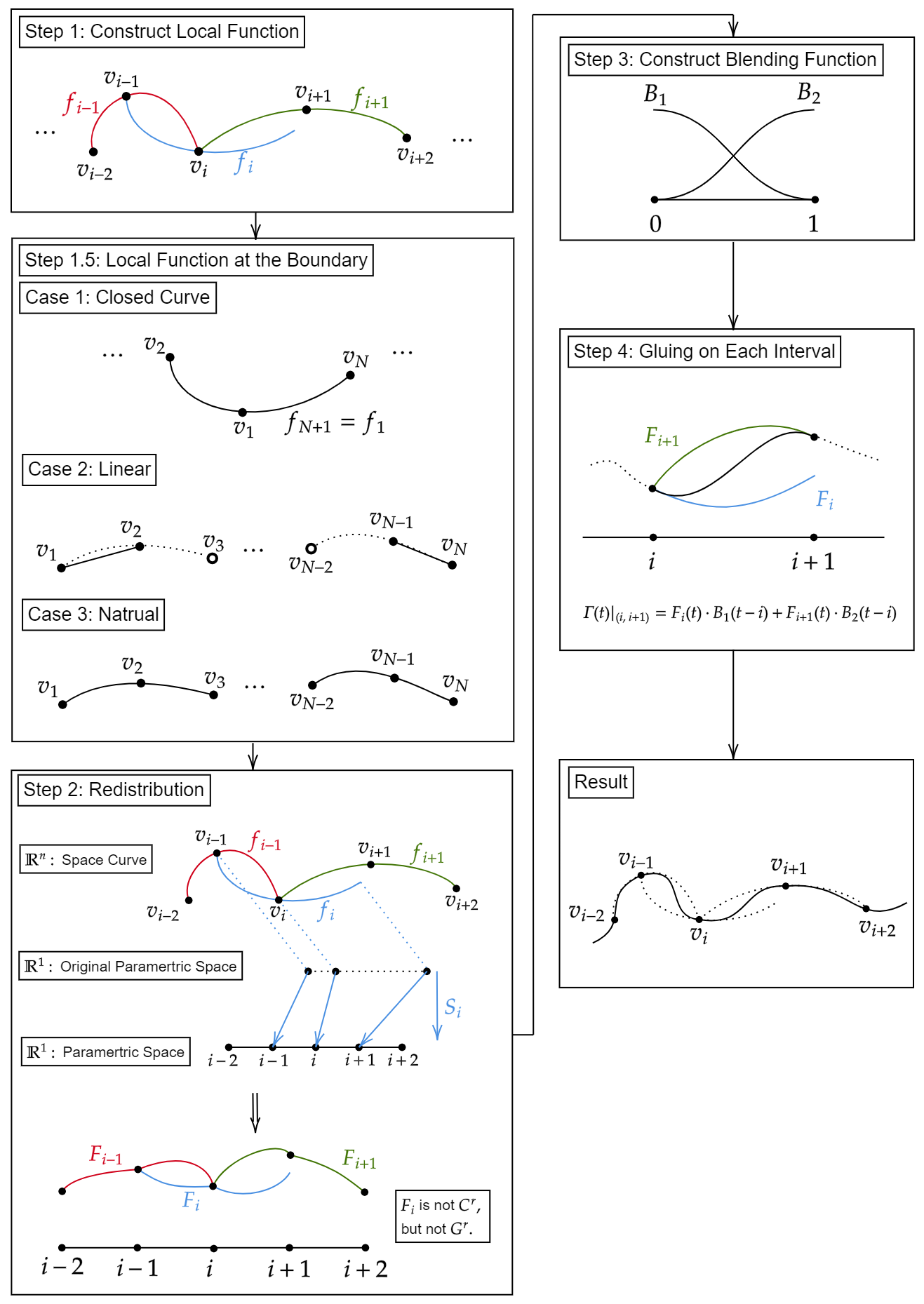}
\caption{Flowchart of our Algorithm \ref{alg: smoothcurve}.}\label{Fig:alg}
\end{figure}


\subsection{Computational Experiments}
Let us show some numerical results based on our Algorithm~\ref{alg: smoothcurve} above.
\begin{example}[$G^r$ Spline Interpolating Curves]\label{exp1:polyfinal}
Fix $r= 2$. 
We start with the $G^r$ interpolation with piecewise polynomial construction. 
We use the parabolic local function from Example ~\ref{ex:para}. We already proved that the local QR function is positive definite and contracted in Example ~\ref{exp:pottoreg}. Therefore, by theorem ~\ref{thm:Main} and Corollary ~\ref{Coro:BonusS}, the interpolation is $G^{r+1}$, where $r$ is only depending on the choice of $r$-blending function. We use the polynomial $r$-blending function defined in Section \ref{sec:blending}. The degree of $r$-blending function is $2r+1$. In this case, we can construct $G^r$ interpolation spline with piecewise polynomials of degree $2r+1$. In the case $r=1$, we use degree $3$ polynomial to interpolate $G^1$ data in any dimension. This result is quite comparable to $G^1$ Bernstein interpolation. Our algorithm automating the local curve so the user does not have to specify the local control points. In addition, our algorithm can increase the smoothness at a very low extra cost. Because the local curve is the same we only have to compute the blending function which the explicit formula of the function was given.\\
For a set of random points in 3d space, we use $2$-blending function to have $G^3$ smooth curve interpolation. As shown in Figure~\ref{fig:3dcurves}, every point in the original data set (red) is passed by the curve. On the other hand, We can calculate the curvature of the curve to show that the curve can achieve at least $G^2$, see Figure ~\ref{fig:3dC2} for the color pattern for the curvature. To show the smoothness of the spline interpolatory curve,  another example is given in Figure~\ref{fig:3dcurves} in tube view.\\
As we can see in Figure ~\ref{fig:3dC2}, the curvature is higher near data points, which is a desired feature. Some other methods like Circular spline will have curvature peaks in the middle between data points. 

\begin{figure}
    \centering
    \includegraphics[width=1 \textwidth, height=0.25\textwidth]{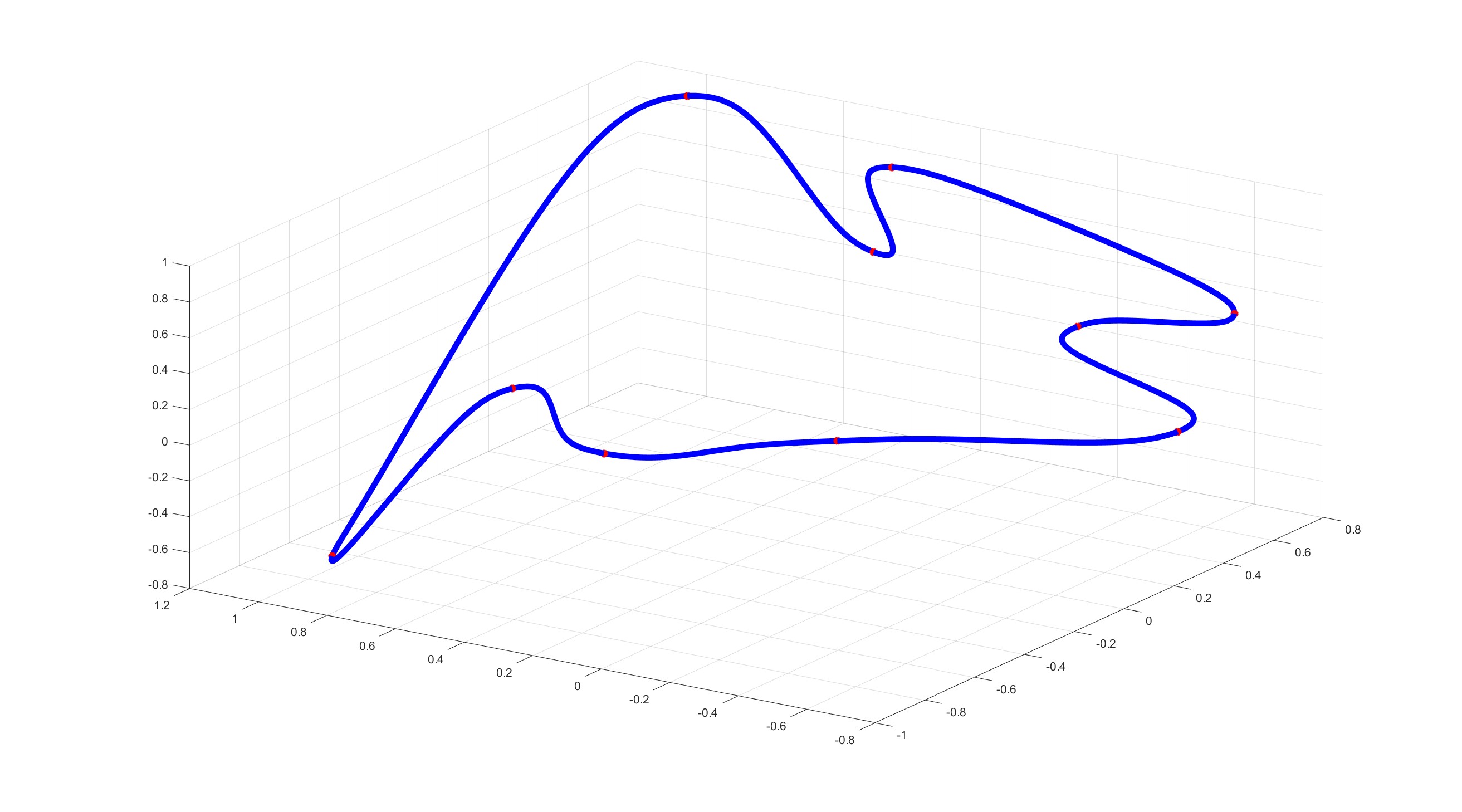}
    \includegraphics[width=1 \textwidth,height=0.25\textwidth]{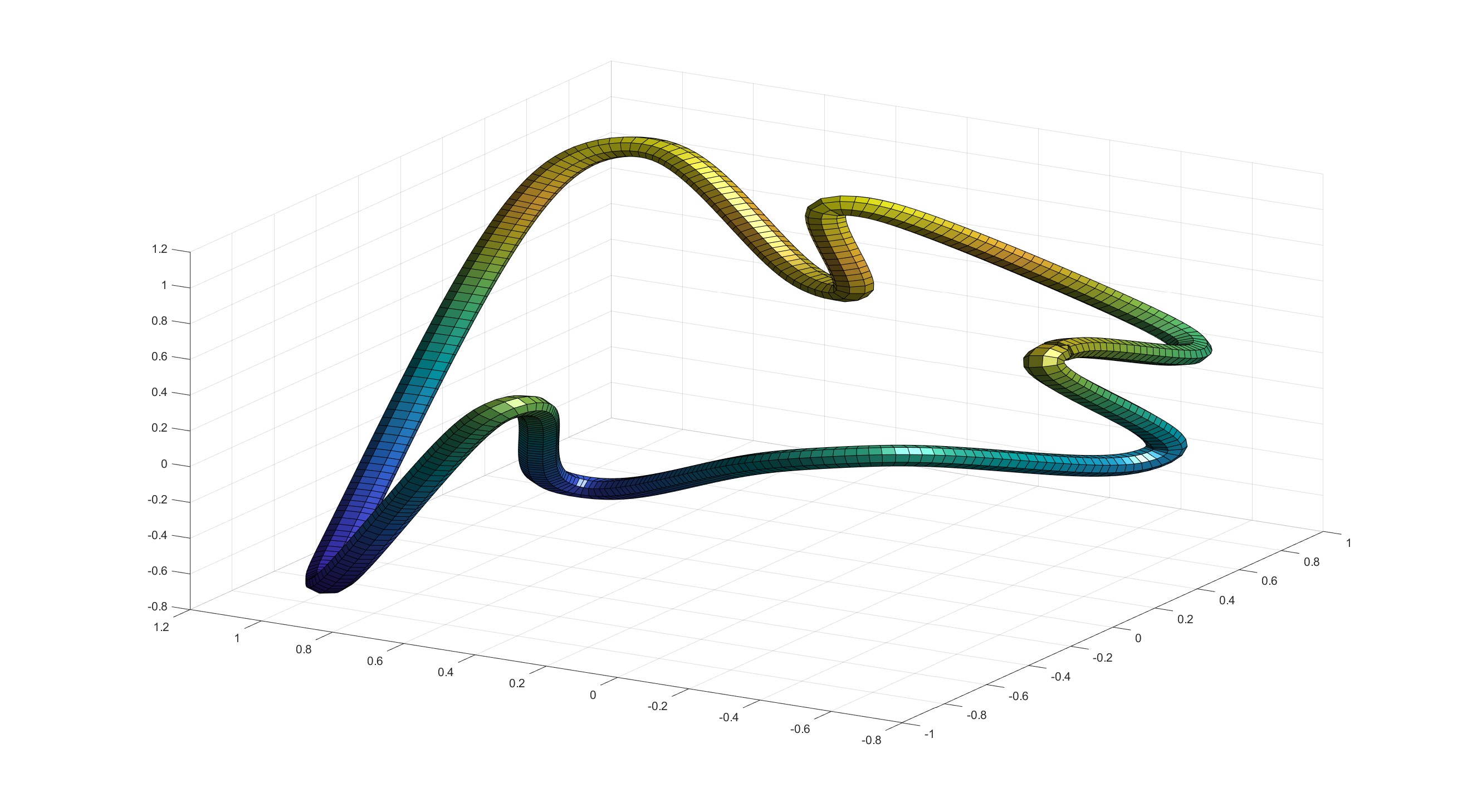}
    \caption{3D interpolatory curve (top) with the given points in red and it's in tube view (bottom)}
    \label{fig:3dcurves}
\end{figure}

Let us give two more examples in Figure ~\ref{fig:thankyou} and ~\ref{fig:curveofhead}.
\begin{figure}
    \centering
    \begin{tabular}{cc}
    \includegraphics[width=0.5\textwidth, height=0.5\textwidth]{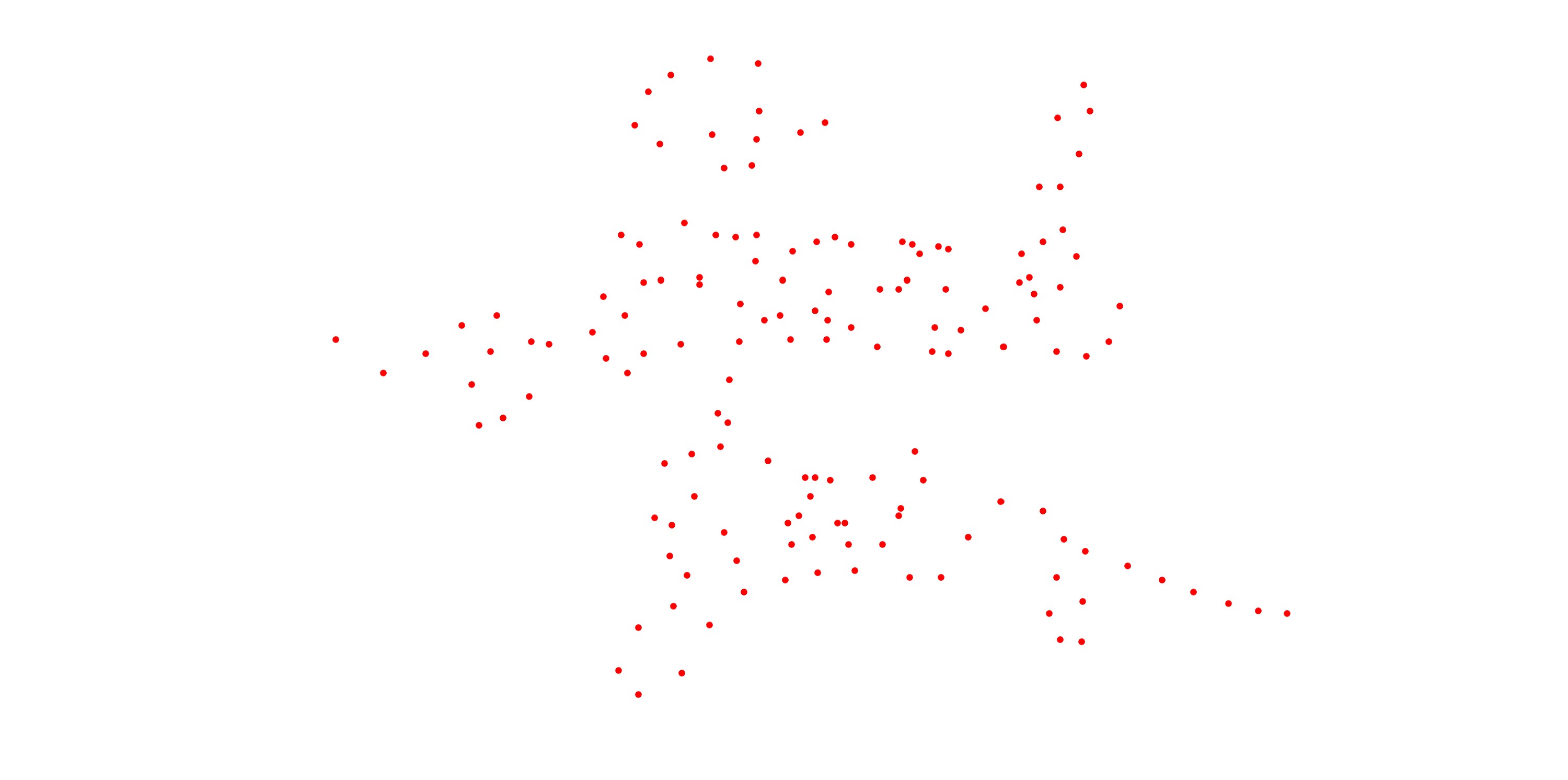}&
    \includegraphics[width=0.5\textwidth, height=0.5\textwidth]{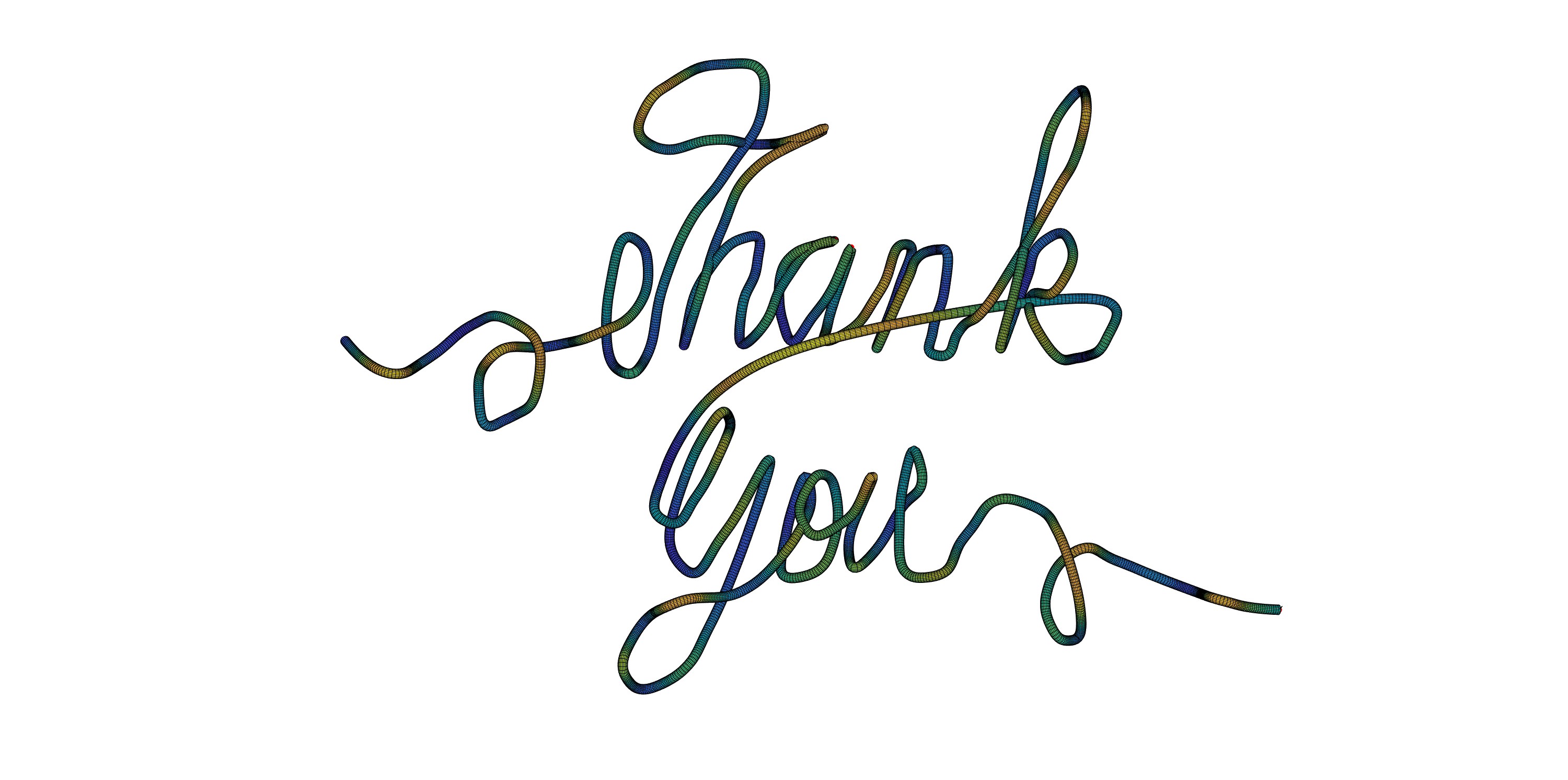}\\
    \end{tabular}
    \caption{A $C^2$ smooth interpolating curve (right) based on a given data set (left). }
    \label{fig:thankyou}
\end{figure}

\begin{figure}
     \begin{tabular}{ll}
    \includegraphics[width=0.5\textwidth, height=0.5\textwidth]{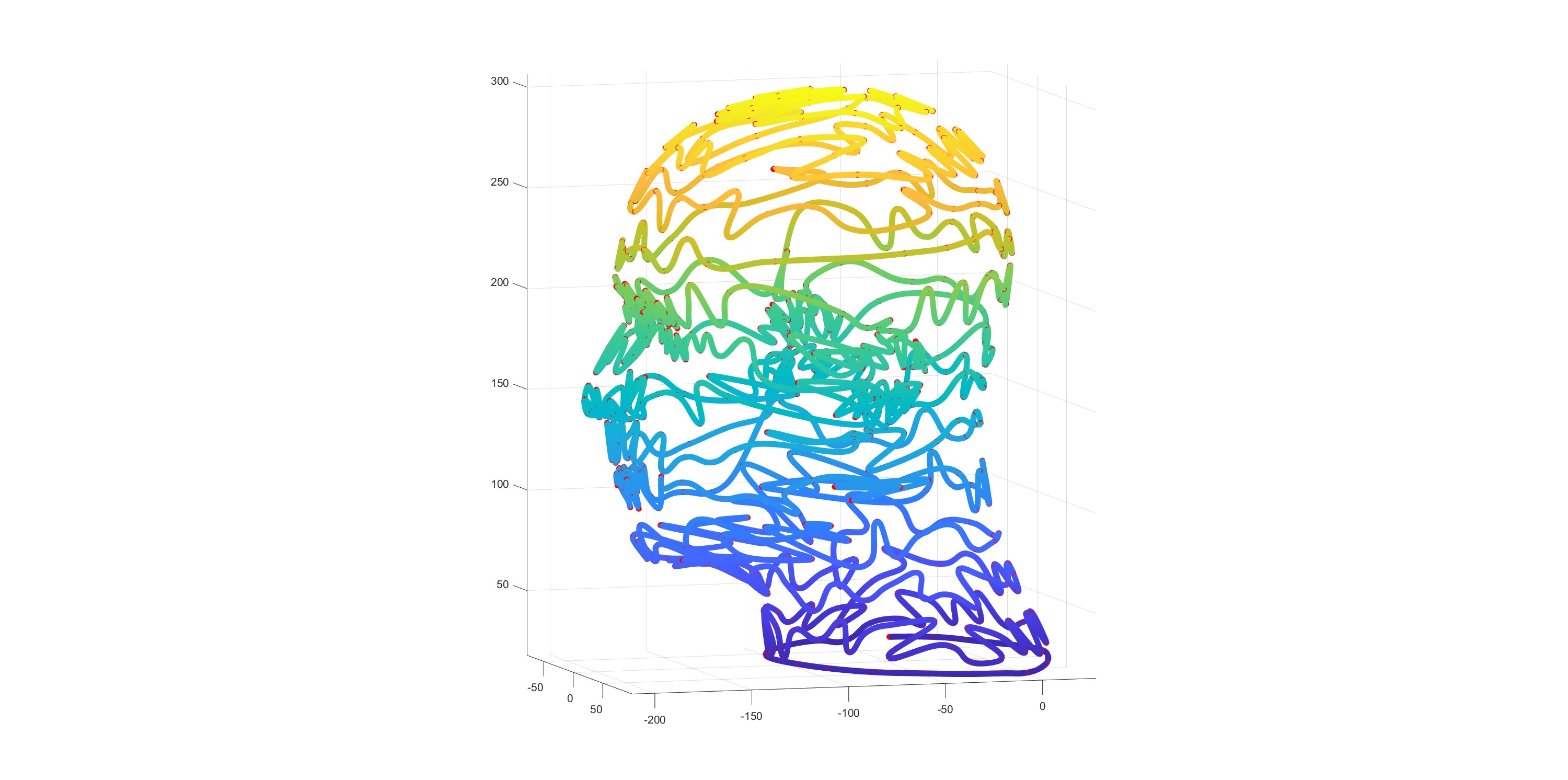}&
     \includegraphics[width=0.5\textwidth, height=0.5\textwidth]{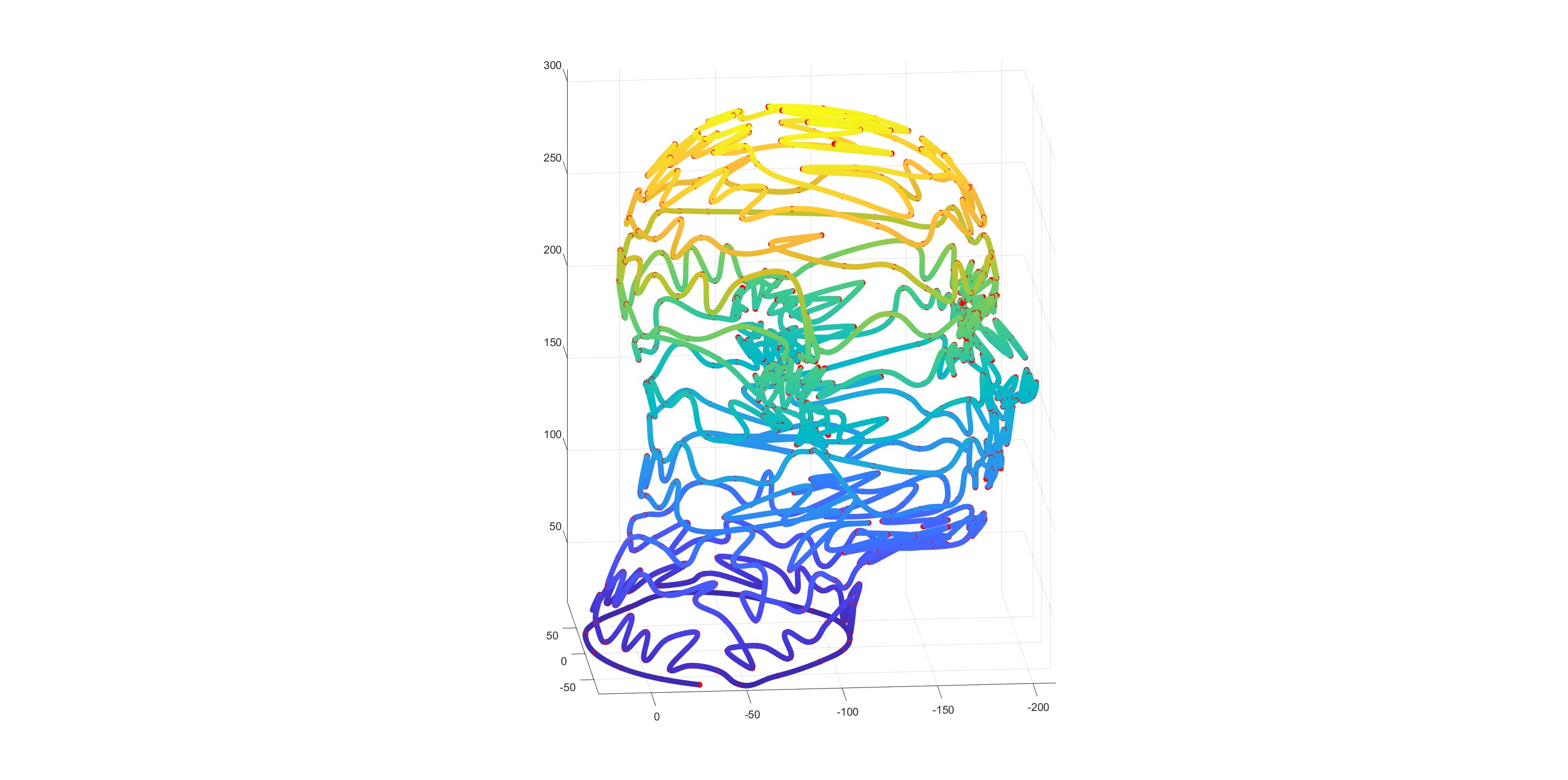}
      \end{tabular}
    \caption{Two views of smooth interpolating curve based on a data set of human head}
    \label{fig:curveofhead}
\end{figure}
\end{example}

\begin{example} [Corner Preservation]\label{exp2:Cornner}
On planar data points, the data points themselves are not always suitable for smooth interpolation. See Figure ~\ref{fig:corner}. In the Figure, we see that in order to interpolate the point at the corner, we must have a curve that is not a straight line. To address this feature, we have the following definition:
\begin{definition}[Lacol Convex and Degenerate Points]\label{def:covexanddeg}
    Consider a series of $5$ data points $\bfv_1, \bfv_2, \cdots, \bfv_5$ on $\mathbb{R}^2$, we say point $\bfv_3$ is a local convex point if $det(\bfv_2-\bfv_1, \bfv_3-\bfv_2)$, $det(\bfv_3-\bfv_2, \bfv_4-\bfv_3)$, and $det(\bfv_4-\bfv_3, \bfv_5-\bfv_4)$ are non-zeros and have same sign. We say $\bf_3$ is degenerated if at least one of these three determination are zero.
\end{definition}
  \begin{figure}
      \centering
\tikzset{every picture/.style={line width=0.75pt}} 

\begin{tikzpicture}[x=0.75pt,y=0.75pt,yscale=-1,xscale=1]

\draw    (338.93,111.25) .. controls (379,111.24) and (368,88.24) .. (388.78,111.58) ;
\draw    (270.92,154.71) .. controls (281,135.24) and (275,128.24) .. (289.08,110.92) ;
\draw  [dash pattern={on 0.84pt off 2.51pt}]  (289.08,110.92) -- (270.92,154.71) ;
\draw    (289.26,111.33) .. controls (306,87.24) and (310,111.24) .. (339.42,111) ;
\draw    (107.75,121.92) .. controls (118.44,109.33) and (105.78,88) .. (134.42,88) ;
\draw    (161.11,122.58) .. controls (151.11,110) and (164.44,88.67) .. (134.42,88) ;
\draw    (425.83,198.5) -- (407.67,154.71) ;
\draw  [dash pattern={on 0.84pt off 2.51pt}]  (389.5,110.92) -- (407.67,154.71) ;
\draw  [draw opacity=0][fill={rgb, 255:red, 0; green, 0; blue, 0 }  ,fill opacity=1 ] (430.08,198.5) .. controls (430.08,196.15) and (428.18,194.25) .. (425.83,194.25) .. controls (423.49,194.25) and (421.58,196.15) .. (421.58,198.5) .. controls (421.58,200.85) and (423.49,202.75) .. (425.83,202.75) .. controls (428.18,202.75) and (430.08,200.85) .. (430.08,198.5) -- cycle ;
\draw    (407.67,154.71) .. controls (397.58,135.24) and (403.58,128.24) .. (389.5,110.92) ;
\draw  [draw opacity=0][fill={rgb, 255:red, 0; green, 0; blue, 0 }  ,fill opacity=1 ] (411.92,154.71) .. controls (411.92,152.36) and (410.01,150.46) .. (407.67,150.46) .. controls (405.32,150.46) and (403.42,152.36) .. (403.42,154.71) .. controls (403.42,157.06) and (405.32,158.96) .. (407.67,158.96) .. controls (410.01,158.96) and (411.92,157.06) .. (411.92,154.71) -- cycle ;
\draw  [dash pattern={on 0.84pt off 2.51pt}]  (289.08,110.92) -- (388.78,111.58) ;
\draw  [dash pattern={on 4.5pt off 4.5pt}]  (161.11,122.58) -- (134.42,88) ;
\draw  [dash pattern={on 4.5pt off 4.5pt}]  (107.75,121.92) -- (134.44,87.33) ;
\draw  [draw opacity=0][fill={rgb, 255:red, 0; green, 0; blue, 0 }  ,fill opacity=1 ] (43.5,197.25) .. controls (43.5,194.9) and (45.4,193) .. (47.75,193) .. controls (50.1,193) and (52,194.9) .. (52,197.25) .. controls (52,199.6) and (50.1,201.5) .. (47.75,201.5) .. controls (45.4,201.5) and (43.5,199.6) .. (43.5,197.25) -- cycle ;
\draw  [draw opacity=0][fill={rgb, 255:red, 0; green, 0; blue, 0 }  ,fill opacity=1 ] (76.83,155.92) .. controls (76.83,153.57) and (78.74,151.67) .. (81.08,151.67) .. controls (83.43,151.67) and (85.33,153.57) .. (85.33,155.92) .. controls (85.33,158.26) and (83.43,160.17) .. (81.08,160.17) .. controls (78.74,160.17) and (76.83,158.26) .. (76.83,155.92) -- cycle ;
\draw  [draw opacity=0][fill={rgb, 255:red, 0; green, 0; blue, 0 }  ,fill opacity=1 ] (103.5,121.92) .. controls (103.5,119.57) and (105.4,117.67) .. (107.75,117.67) .. controls (110.1,117.67) and (112,119.57) .. (112,121.92) .. controls (112,124.26) and (110.1,126.17) .. (107.75,126.17) .. controls (105.4,126.17) and (103.5,124.26) .. (103.5,121.92) -- cycle ;
\draw  [draw opacity=0][fill={rgb, 255:red, 208; green, 2; blue, 27 }  ,fill opacity=1 ] (130.17,88) .. controls (130.17,85.65) and (132.07,83.75) .. (134.42,83.75) .. controls (136.76,83.75) and (138.67,85.65) .. (138.67,88) .. controls (138.67,90.35) and (136.76,92.25) .. (134.42,92.25) .. controls (132.07,92.25) and (130.17,90.35) .. (130.17,88) -- cycle ;
\draw  [draw opacity=0][fill={rgb, 255:red, 0; green, 0; blue, 0 }  ,fill opacity=1 ] (225.36,197.92) .. controls (225.36,195.57) and (223.46,193.67) .. (221.11,193.67) .. controls (218.76,193.67) and (216.86,195.57) .. (216.86,197.92) .. controls (216.86,200.26) and (218.76,202.17) .. (221.11,202.17) .. controls (223.46,202.17) and (225.36,200.26) .. (225.36,197.92) -- cycle ;
\draw  [draw opacity=0][fill={rgb, 255:red, 0; green, 0; blue, 0 }  ,fill opacity=1 ] (192.03,156.58) .. controls (192.03,154.24) and (190.12,152.33) .. (187.78,152.33) .. controls (185.43,152.33) and (183.53,154.24) .. (183.53,156.58) .. controls (183.53,158.93) and (185.43,160.83) .. (187.78,160.83) .. controls (190.12,160.83) and (192.03,158.93) .. (192.03,156.58) -- cycle ;
\draw  [draw opacity=0][fill={rgb, 255:red, 0; green, 0; blue, 0 }  ,fill opacity=1 ] (165.36,122.58) .. controls (165.36,120.24) and (163.46,118.33) .. (161.11,118.33) .. controls (158.76,118.33) and (156.86,120.24) .. (156.86,122.58) .. controls (156.86,124.93) and (158.76,126.83) .. (161.11,126.83) .. controls (163.46,126.83) and (165.36,124.93) .. (165.36,122.58) -- cycle ;
\draw    (107.75,121.92) -- (47.75,197.25) ;
\draw    (161.11,122.58) -- (221.11,197.92) ;
\draw  [draw opacity=0][fill={rgb, 255:red, 208; green, 2; blue, 27 }  ,fill opacity=1 ] (284.83,110.92) .. controls (284.83,108.57) and (286.74,106.67) .. (289.08,106.67) .. controls (291.43,106.67) and (293.33,108.57) .. (293.33,110.92) .. controls (293.33,113.26) and (291.43,115.17) .. (289.08,115.17) .. controls (286.74,115.17) and (284.83,113.26) .. (284.83,110.92) -- cycle ;
\draw  [draw opacity=0][fill={rgb, 255:red, 0; green, 0; blue, 0 }  ,fill opacity=1 ] (335.17,111) .. controls (335.17,108.65) and (337.07,106.75) .. (339.42,106.75) .. controls (341.76,106.75) and (343.67,108.65) .. (343.67,111) .. controls (343.67,113.35) and (341.76,115.25) .. (339.42,115.25) .. controls (337.07,115.25) and (335.17,113.35) .. (335.17,111) -- cycle ;
\draw  [draw opacity=0][fill={rgb, 255:red, 208; green, 2; blue, 27 }  ,fill opacity=1 ] (393.03,111.58) .. controls (393.03,109.24) and (391.12,107.33) .. (388.78,107.33) .. controls (386.43,107.33) and (384.53,109.24) .. (384.53,111.58) .. controls (384.53,113.93) and (386.43,115.83) .. (388.78,115.83) .. controls (391.12,115.83) and (393.03,113.93) .. (393.03,111.58) -- cycle ;
\draw  [draw opacity=0][fill={rgb, 255:red, 0; green, 0; blue, 0 }  ,fill opacity=1 ] (248.5,198.5) .. controls (248.5,196.15) and (250.4,194.25) .. (252.75,194.25) .. controls (255.1,194.25) and (257,196.15) .. (257,198.5) .. controls (257,200.85) and (255.1,202.75) .. (252.75,202.75) .. controls (250.4,202.75) and (248.5,200.85) .. (248.5,198.5) -- cycle ;
\draw  [draw opacity=0][fill={rgb, 255:red, 0; green, 0; blue, 0 }  ,fill opacity=1 ] (266.67,154.71) .. controls (266.67,152.36) and (268.57,150.46) .. (270.92,150.46) .. controls (273.26,150.46) and (275.17,152.36) .. (275.17,154.71) .. controls (275.17,157.06) and (273.26,158.96) .. (270.92,158.96) .. controls (268.57,158.96) and (266.67,157.06) .. (266.67,154.71) -- cycle ;
\draw    (252.75,198.5) -- (270.92,154.71) ;

\end{tikzpicture}

      \caption{The Points at the Corner of Two Straight Lines Can Not be Smooth without Adding Extra Structure. }
     \label{fig:corner}
  \end{figure}
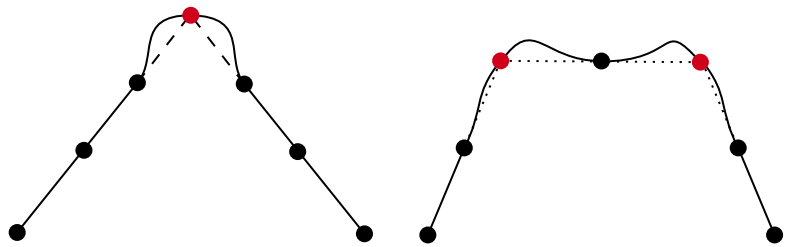

We can illustrate the local convex points in the Figure ~\ref{fig:CornorEvest}. In the figure, let ray $R_1$ start from $\bfv_2$ with direction $\bfv_1 \to \bfv_2$, and ray $R_2$ start from $\bfv_4$ with direction $\bfv_5 \to \bfv_4$. We can get a region by these two rays and line $\bfv_2-\bfv_4$ (See shading region in Figure ~\ref{fig:CornorEvest}). One can easily check that if $\bfv_3$ is located in the slash region, then $\bfv_3$ is a local convex point. If $\bfv_3$ is on the boundary of the slash region, then it's degenerated. One important case of degenerate points is when $\bfv_3$ is at the intersection $A$ of $R_1$ and $R_2$. The smooth interpolation curve will not be pleasing. We say $\bfv_3$ is a corner in this case. In Figure ~\ref{fig:corner}, the red points are corner points. We see that the interpolation curves lose local convexity when the original polygon is local convex. In practice, we can easily detect corner points and replace the local function to be piecewise linear. 
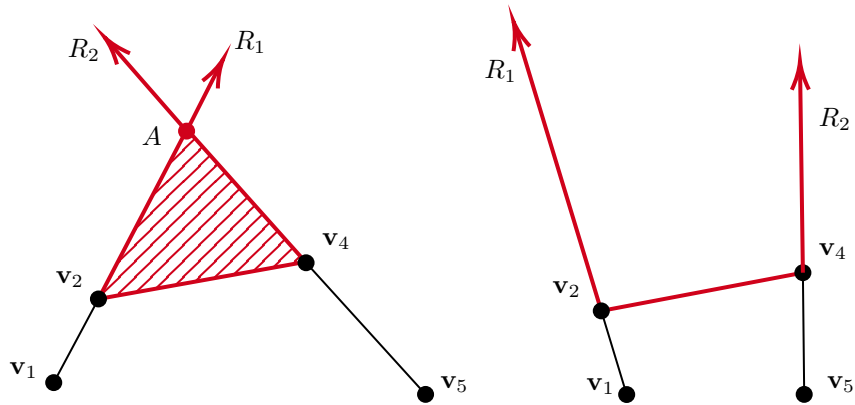
\begin{figure}
    \centering

 
\tikzset{
pattern size/.store in=\mcSize, 
pattern size = 5pt,
pattern thickness/.store in=\mcThickness, 
pattern thickness = 0.3pt,
pattern radius/.store in=\mcRadius, 
pattern radius = 1pt}
\makeatletter
\pgfutil@ifundefined{pgf@pattern@name@_yyd6e5nh6}{
\pgfdeclarepatternformonly[\mcThickness,\mcSize]{_yyd6e5nh6}
{\pgfqpoint{0pt}{0pt}}
{\pgfpoint{\mcSize+\mcThickness}{\mcSize+\mcThickness}}
{\pgfpoint{\mcSize}{\mcSize}}
{
\pgfsetcolor{\tikz@pattern@color}
\pgfsetlinewidth{\mcThickness}
\pgfpathmoveto{\pgfqpoint{0pt}{0pt}}
\pgfpathlineto{\pgfpoint{\mcSize+\mcThickness}{\mcSize+\mcThickness}}
\pgfusepath{stroke}
}}
\makeatother

 
\tikzset{
pattern size/.store in=\mcSize, 
pattern size = 5pt,
pattern thickness/.store in=\mcThickness, 
pattern thickness = 0.3pt,
pattern radius/.store in=\mcRadius, 
pattern radius = 1pt}
\makeatletter
\pgfutil@ifundefined{pgf@pattern@name@_b60i0ngfc}{
\pgfdeclarepatternformonly[\mcThickness,\mcSize]{_b60i0ngfc}
{\pgfqpoint{0pt}{0pt}}
{\pgfpoint{\mcSize+\mcThickness}{\mcSize+\mcThickness}}
{\pgfpoint{\mcSize}{\mcSize}}
{
\pgfsetcolor{\tikz@pattern@color}
\pgfsetlinewidth{\mcThickness}
\pgfpathmoveto{\pgfqpoint{0pt}{0pt}}
\pgfpathlineto{\pgfpoint{\mcSize+\mcThickness}{\mcSize+\mcThickness}}
\pgfusepath{stroke}
}}
\makeatother
\tikzset{every picture/.style={line width=0.75pt}} 

\begin{tikzpicture}[x=0.75pt,y=0.75pt,yscale=-1,xscale=1]

\draw  [draw opacity=0][pattern=_yyd6e5nh6,pattern size=6pt,pattern thickness=0.75pt,pattern radius=0pt, pattern color={rgb, 255:red, 208; green, 2; blue, 27}] (434.82,81.5) -- (435.46,156.71) -- (334.83,175.92) -- (306,80.42) -- cycle ;
\draw [color={rgb, 255:red, 208; green, 2; blue, 27 }  ,draw opacity=1 ][line width=1.5]    (334.83,175.92) -- (291.87,33.29) ;
\draw [shift={(291,30.42)}, rotate = 73.23] [color={rgb, 255:red, 208; green, 2; blue, 27 }  ,draw opacity=1 ][line width=1.5]    (14.21,-4.28) .. controls (9.04,-1.82) and (4.3,-0.39) .. (0,0) .. controls (4.3,0.39) and (9.04,1.82) .. (14.21,4.28)   ;
\draw [color={rgb, 255:red, 208; green, 2; blue, 27 }  ,draw opacity=1 ][line width=1.5]    (334.83,175.92) -- (435.46,156.71) ;
\draw  [color={rgb, 255:red, 208; green, 2; blue, 27 }  ,draw opacity=1 ][pattern=_b60i0ngfc,pattern size=6pt,pattern thickness=0.75pt,pattern radius=0pt, pattern color={rgb, 255:red, 208; green, 2; blue, 27}][line width=1.5]  (128.01,84.58) -- (187.14,151.27) -- (83.83,169.92) -- cycle ;
\draw    (61.5,212) -- (83.83,169.92) ;
\draw  [draw opacity=0][fill={rgb, 255:red, 0; green, 0; blue, 0 }  ,fill opacity=1 ] (57.25,212) .. controls (57.25,209.65) and (59.15,207.75) .. (61.5,207.75) .. controls (63.85,207.75) and (65.75,209.65) .. (65.75,212) .. controls (65.75,214.35) and (63.85,216.25) .. (61.5,216.25) .. controls (59.15,216.25) and (57.25,214.35) .. (57.25,212) -- cycle ;
\draw  [draw opacity=0][fill={rgb, 255:red, 0; green, 0; blue, 0 }  ,fill opacity=1 ] (79.58,169.92) .. controls (79.58,167.57) and (81.49,165.67) .. (83.83,165.67) .. controls (86.18,165.67) and (88.08,167.57) .. (88.08,169.92) .. controls (88.08,172.26) and (86.18,174.17) .. (83.83,174.17) .. controls (81.49,174.17) and (79.58,172.26) .. (79.58,169.92) -- cycle ;
\draw  [draw opacity=0][fill={rgb, 255:red, 208; green, 2; blue, 27 }  ,fill opacity=1 ] (123.57,85.5) .. controls (123.57,83.15) and (125.47,81.25) .. (127.82,81.25) .. controls (130.17,81.25) and (132.07,83.15) .. (132.07,85.5) .. controls (132.07,87.85) and (130.17,89.75) .. (127.82,89.75) .. controls (125.47,89.75) and (123.57,87.85) .. (123.57,85.5) -- cycle ;
\draw  [draw opacity=0][fill={rgb, 255:red, 0; green, 0; blue, 0 }  ,fill opacity=1 ] (251.36,217.92) .. controls (251.36,215.57) and (249.46,213.67) .. (247.11,213.67) .. controls (244.76,213.67) and (242.86,215.57) .. (242.86,217.92) .. controls (242.86,220.26) and (244.76,222.17) .. (247.11,222.17) .. controls (249.46,222.17) and (251.36,220.26) .. (251.36,217.92) -- cycle ;
\draw  [draw opacity=0][fill={rgb, 255:red, 0; green, 0; blue, 0 }  ,fill opacity=1 ] (191.71,151.71) .. controls (191.71,149.36) and (189.81,147.46) .. (187.46,147.46) .. controls (185.12,147.46) and (183.21,149.36) .. (183.21,151.71) .. controls (183.21,154.06) and (185.12,155.96) .. (187.46,155.96) .. controls (189.81,155.96) and (191.71,154.06) .. (191.71,151.71) -- cycle ;
\draw    (247.11,217.92) -- (187.46,151.71) ;
\draw [color={rgb, 255:red, 208; green, 2; blue, 27 }  ,draw opacity=1 ][line width=1.5]    (127.82,85.5) -- (144.65,52.1) ;
\draw [shift={(146,49.42)}, rotate = 116.75] [color={rgb, 255:red, 208; green, 2; blue, 27 }  ,draw opacity=1 ][line width=1.5]    (14.21,-4.28) .. controls (9.04,-1.82) and (4.3,-0.39) .. (0,0) .. controls (4.3,0.39) and (9.04,1.82) .. (14.21,4.28)   ;
\draw [color={rgb, 255:red, 208; green, 2; blue, 27 }  ,draw opacity=1 ][line width=1.5]    (127.82,85.5) -- (88.97,40.69) ;
\draw [shift={(87,38.42)}, rotate = 49.07] [color={rgb, 255:red, 208; green, 2; blue, 27 }  ,draw opacity=1 ][line width=1.5]    (14.21,-4.28) .. controls (9.04,-1.82) and (4.3,-0.39) .. (0,0) .. controls (4.3,0.39) and (9.04,1.82) .. (14.21,4.28)   ;
\draw    (347.5,218) -- (334.83,175.92) ;
\draw  [draw opacity=0][fill={rgb, 255:red, 0; green, 0; blue, 0 }  ,fill opacity=1 ] (343.25,218) .. controls (343.25,215.65) and (345.15,213.75) .. (347.5,213.75) .. controls (349.85,213.75) and (351.75,215.65) .. (351.75,218) .. controls (351.75,220.35) and (349.85,222.25) .. (347.5,222.25) .. controls (345.15,222.25) and (343.25,220.35) .. (343.25,218) -- cycle ;
\draw  [draw opacity=0][fill={rgb, 255:red, 0; green, 0; blue, 0 }  ,fill opacity=1 ] (330.58,175.92) .. controls (330.58,173.57) and (332.49,171.67) .. (334.83,171.67) .. controls (337.18,171.67) and (339.08,173.57) .. (339.08,175.92) .. controls (339.08,178.26) and (337.18,180.17) .. (334.83,180.17) .. controls (332.49,180.17) and (330.58,178.26) .. (330.58,175.92) -- cycle ;
\draw  [draw opacity=0][fill={rgb, 255:red, 0; green, 0; blue, 0 }  ,fill opacity=1 ] (440.36,217.92) .. controls (440.36,215.57) and (438.46,213.67) .. (436.11,213.67) .. controls (433.76,213.67) and (431.86,215.57) .. (431.86,217.92) .. controls (431.86,220.26) and (433.76,222.17) .. (436.11,222.17) .. controls (438.46,222.17) and (440.36,220.26) .. (440.36,217.92) -- cycle ;
\draw  [draw opacity=0][fill={rgb, 255:red, 0; green, 0; blue, 0 }  ,fill opacity=1 ] (439.71,156.71) .. controls (439.71,154.36) and (437.81,152.46) .. (435.46,152.46) .. controls (433.12,152.46) and (431.21,154.36) .. (431.21,156.71) .. controls (431.21,159.06) and (433.12,160.96) .. (435.46,160.96) .. controls (437.81,160.96) and (439.71,159.06) .. (439.71,156.71) -- cycle ;
\draw    (436.11,217.92) -- (435.46,156.71) ;
\draw [color={rgb, 255:red, 208; green, 2; blue, 27 }  ,draw opacity=1 ][line width=1.5]    (435.46,156.71) -- (434.04,53.42) ;
\draw [shift={(434,50.42)}, rotate = 89.21] [color={rgb, 255:red, 208; green, 2; blue, 27 }  ,draw opacity=1 ][line width=1.5]    (14.21,-4.28) .. controls (9.04,-1.82) and (4.3,-0.39) .. (0,0) .. controls (4.3,0.39) and (9.04,1.82) .. (14.21,4.28)   ;

\draw (38,200.4) node [anchor=north west][inner sep=0.75pt]    {$\bfv_{1}$};
\draw (61,154.4) node [anchor=north west][inner sep=0.75pt]    {$\bfv_{2}$};
\draw (194,135.4) node [anchor=north west][inner sep=0.75pt]    {$\bfv_{4}$};
\draw (253,207.4) node [anchor=north west][inner sep=0.75pt]    {$\bfv_{5}$};
\draw (103.91,81.86) node [anchor=north west][inner sep=0.75pt]    {$A$};
\draw (150,33.4) node [anchor=north west][inner sep=0.75pt]    {$R_{1}$};
\draw (67,37.4) node [anchor=north west][inner sep=0.75pt]    {$R_{2}$};
\draw (326,208.4) node [anchor=north west][inner sep=0.75pt]    {$\bfv_{1}$};
\draw (309,159.4) node [anchor=north west][inner sep=0.75pt]    {$\bfv_{2}$};
\draw (442,140.4) node [anchor=north west][inner sep=0.75pt]    {$\bfv_{4}$};
\draw (446,207.4) node [anchor=north west][inner sep=0.75pt]    {$\bfv_{5}$};
\draw (275,48.4) node [anchor=north west][inner sep=0.75pt]    {$R_{1}$};
\draw (442,72.4) node [anchor=north west][inner sep=0.75pt]    {$R_{2}$};

\end{tikzpicture}
    \caption{The degenerated $\bfv_3$ is located in the Edges of the slashed Area. The local convex point $\bfv_3$ will be in the interior of the slash area. If two rays $R_1$ and $R_2$ have an intersection point $A$ (As Shown on the Left), then $\bfv_3$ would be categorized as corner if it close to A.}
    \label{fig:CornorEvest}
\end{figure}

    We show an example of taking care of the corners of a given data set. Suppose that we have a set of data with four corners. If we use our method straightforwardly, our smooth interpolating curve will have rounded corners. See the left of
    Figure~\ref{corners}. When the code recognizes a point as a corner, it skips the local constructive step and produces the curve with corners by replacing the local curve with a piecewise linear function, as shown on the right of Figure~\ref{corners}.

  \begin{figure}
    \centering
    \includegraphics[width=0.8\textwidth]{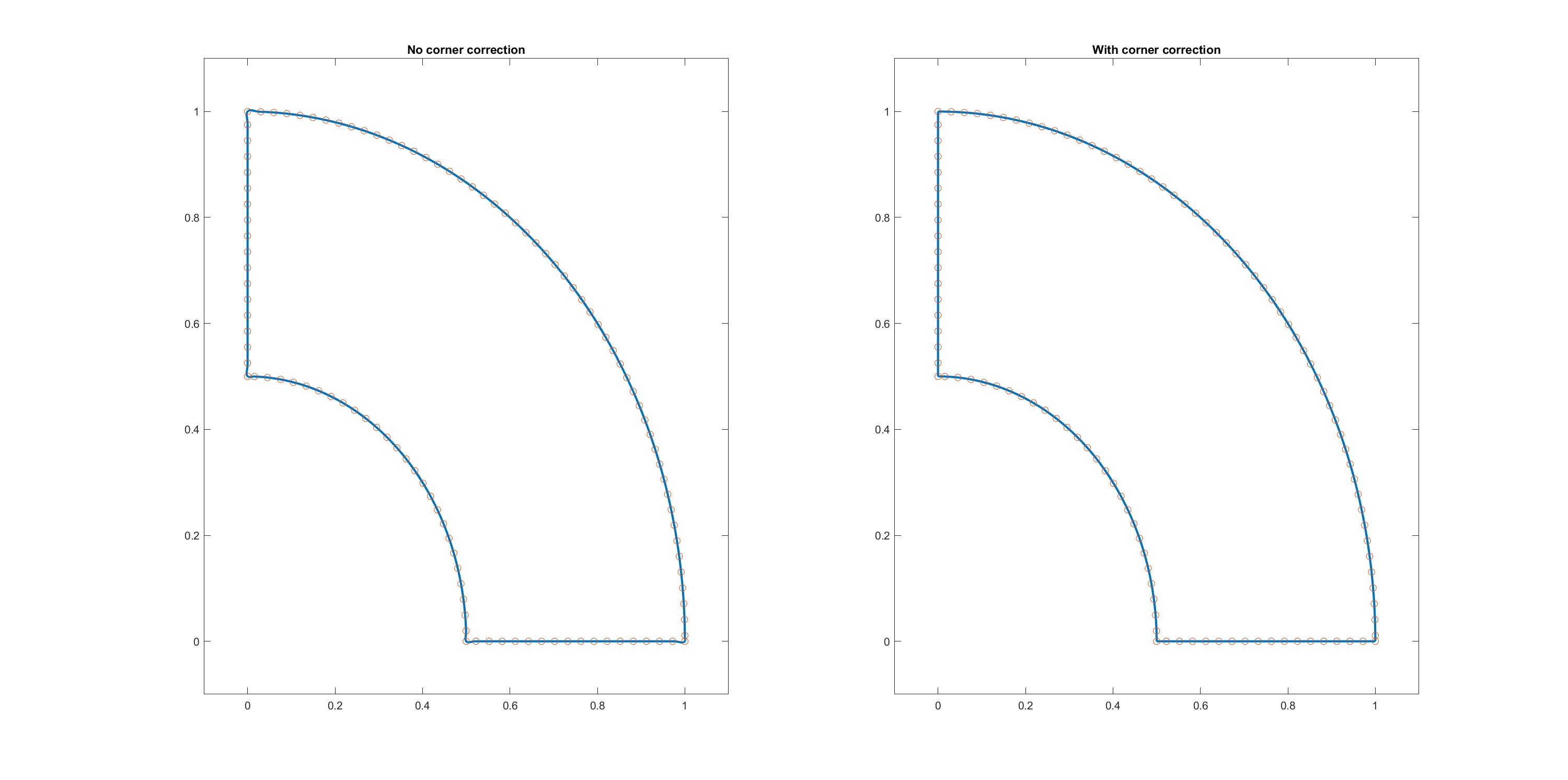}
    \includegraphics[width=0.8\textwidth]{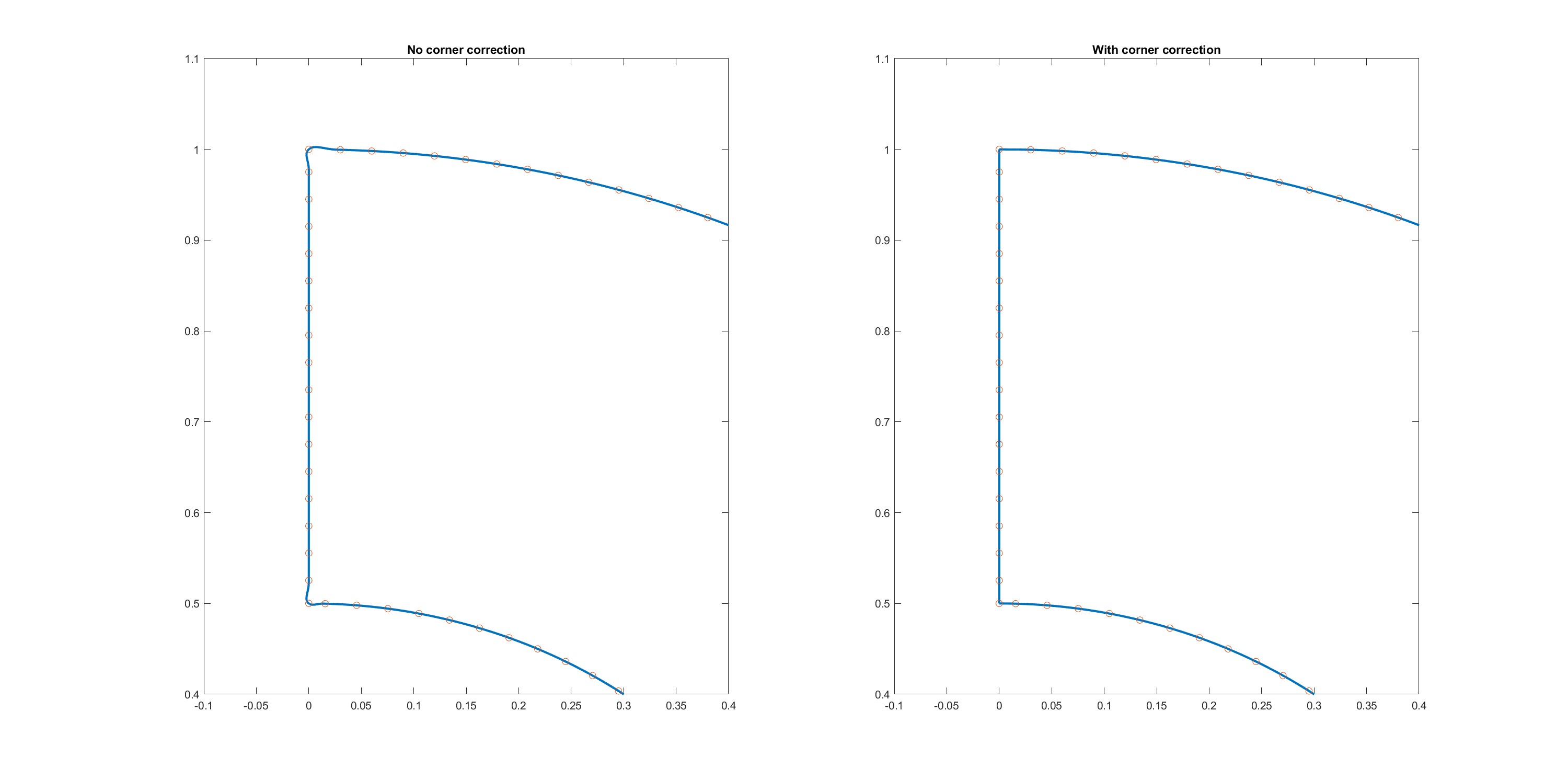}
    \caption{A smooth interpolation curve without and with corners}
    \label{corners}
\end{figure}  
    
\end{example}

\begin{example}[Convex Planar Data]\label{exp3:Convex}
We defined local convex points in Definition ~\ref{def:covexanddeg}. A natural question arises here: could we preserve the local convexity? That is, if $\bfv_i$ is a local convex point, could we have $\bfF_i$ such that $\bfF_i|_{[i-1,i+1]}$ do not change sign on its curvature? The answer is affirmative. We  can achieve this by choosing a linear local curve. Indeed, 
let $\bfv_1, \bfv_2, \cdots, \bfv_5$ be a series of  points and $\bfv_2$, $\bfv_3$, and $\bfv_4$ are local convex points. Suppose tangent line $\bfv_2-\bfv_1$, $\bfv_3-\bfv_2, \cdots , \bfv_5-\bfv_4$ are ordered clock-wisely as show in Figure ~\ref{fig:localLineseg}. Then there must be a line pass through $\bfv_3$ which one of its direction vectors is located between $\bfv_3-\bfv_2$ and $\bfv_4-\bfv_3$, shown as the blue line in Figure ~\ref{fig:localLineseg}. Because $\bfv_2$ and $\bfv_4$ are also local convex, we can find two similar lines pass $\bfv_2$ and $\bfv_4$, shown as orange lines in Figure ~\ref{fig:localLineseg}. The $P$ and $Q$ are intersections of the blue line and orange lines. The linear function from $P$ to $Q$ is defined as the local function in this example.\\

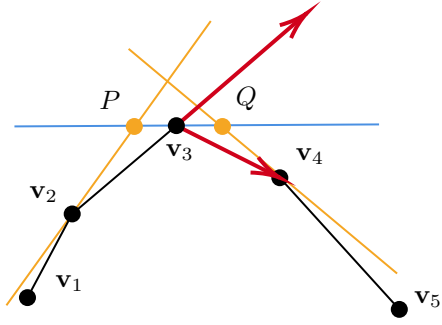
\begin{figure}
    \centering 
\tikzset{every picture/.style={line width=0.75pt}} 

\begin{tikzpicture}[x=0.75pt,y=0.75pt,yscale=-1,xscale=1]

\draw [color={rgb, 255:red, 245; green, 166; blue, 35 }  ,draw opacity=1 ]   (132,106.88) -- (266,219.88) ;
\draw [color={rgb, 255:red, 245; green, 166; blue, 35 }  ,draw opacity=1 ]   (71,235.88) -- (173,92.88) ;
\draw [color={rgb, 255:red, 74; green, 144; blue, 226 }  ,draw opacity=1 ]   (75,145.88) -- (257,144.88) ;
\draw    (81.5,232) -- (103.83,189.92) ;
\draw  [draw opacity=0][fill={rgb, 255:red, 0; green, 0; blue, 0 }  ,fill opacity=1 ] (77.25,232) .. controls (77.25,229.65) and (79.15,227.75) .. (81.5,227.75) .. controls (83.85,227.75) and (85.75,229.65) .. (85.75,232) .. controls (85.75,234.35) and (83.85,236.25) .. (81.5,236.25) .. controls (79.15,236.25) and (77.25,234.35) .. (77.25,232) -- cycle ;
\draw  [draw opacity=0][fill={rgb, 255:red, 0; green, 0; blue, 0 }  ,fill opacity=1 ] (99.58,189.92) .. controls (99.58,187.57) and (101.49,185.67) .. (103.83,185.67) .. controls (106.18,185.67) and (108.08,187.57) .. (108.08,189.92) .. controls (108.08,192.26) and (106.18,194.17) .. (103.83,194.17) .. controls (101.49,194.17) and (99.58,192.26) .. (99.58,189.92) -- cycle ;
\draw  [draw opacity=0][fill={rgb, 255:red, 0; green, 0; blue, 0 }  ,fill opacity=1 ] (271.36,237.92) .. controls (271.36,235.57) and (269.46,233.67) .. (267.11,233.67) .. controls (264.76,233.67) and (262.86,235.57) .. (262.86,237.92) .. controls (262.86,240.26) and (264.76,242.17) .. (267.11,242.17) .. controls (269.46,242.17) and (271.36,240.26) .. (271.36,237.92) -- cycle ;
\draw  [draw opacity=0][fill={rgb, 255:red, 0; green, 0; blue, 0 }  ,fill opacity=1 ] (211.71,171.71) .. controls (211.71,169.36) and (209.81,167.46) .. (207.46,167.46) .. controls (205.12,167.46) and (203.21,169.36) .. (203.21,171.71) .. controls (203.21,174.06) and (205.12,175.96) .. (207.46,175.96) .. controls (209.81,175.96) and (211.71,174.06) .. (211.71,171.71) -- cycle ;
\draw    (267.11,237.92) -- (207.46,171.71) ;
\draw [color={rgb, 255:red, 208; green, 2; blue, 27 }  ,draw opacity=1 ][line width=1.5]    (155.82,145.5) -- (204.79,170.35) ;
\draw [shift={(207.46,171.71)}, rotate = 206.91] [color={rgb, 255:red, 208; green, 2; blue, 27 }  ,draw opacity=1 ][line width=1.5]    (14.21,-4.28) .. controls (9.04,-1.82) and (4.3,-0.39) .. (0,0) .. controls (4.3,0.39) and (9.04,1.82) .. (14.21,4.28)   ;
\draw [color={rgb, 255:red, 208; green, 2; blue, 27 }  ,draw opacity=1 ][line width=1.5]    (155.82,145.5) -- (218.74,90.84) ;
\draw [shift={(221,88.88)}, rotate = 139.02] [color={rgb, 255:red, 208; green, 2; blue, 27 }  ,draw opacity=1 ][line width=1.5]    (14.21,-4.28) .. controls (9.04,-1.82) and (4.3,-0.39) .. (0,0) .. controls (4.3,0.39) and (9.04,1.82) .. (14.21,4.28)   ;
\draw  [draw opacity=0][fill={rgb, 255:red, 0; green, 0; blue, 0 }  ,fill opacity=1 ] (151.57,145.5) .. controls (151.57,143.15) and (153.47,141.25) .. (155.82,141.25) .. controls (158.17,141.25) and (160.07,143.15) .. (160.07,145.5) .. controls (160.07,147.85) and (158.17,149.75) .. (155.82,149.75) .. controls (153.47,149.75) and (151.57,147.85) .. (151.57,145.5) -- cycle ;
\draw    (103.83,189.92) -- (155.82,145.5) ;
\draw  [draw opacity=0][fill={rgb, 255:red, 245; green, 166; blue, 35 }  ,fill opacity=1 ] (130.58,145.92) .. controls (130.58,143.57) and (132.49,141.67) .. (134.83,141.67) .. controls (137.18,141.67) and (139.08,143.57) .. (139.08,145.92) .. controls (139.08,148.26) and (137.18,150.17) .. (134.83,150.17) .. controls (132.49,150.17) and (130.58,148.26) .. (130.58,145.92) -- cycle ;
\draw  [draw opacity=0][fill={rgb, 255:red, 245; green, 166; blue, 35 }  ,fill opacity=1 ] (174.58,145.92) .. controls (174.58,143.57) and (176.49,141.67) .. (178.83,141.67) .. controls (181.18,141.67) and (183.08,143.57) .. (183.08,145.92) .. controls (183.08,148.26) and (181.18,150.17) .. (178.83,150.17) .. controls (176.49,150.17) and (174.58,148.26) .. (174.58,145.92) -- cycle ;

\draw (94,219.4) node [anchor=north west][inner sep=0.75pt]    {$\bfv_{1}$};
\draw (81,174.4) node [anchor=north west][inner sep=0.75pt]    {$\bfv_{2}$};
\draw (214,155.4) node [anchor=north west][inner sep=0.75pt]    {$\bfv_{4}$};
\draw (273,227.4) node [anchor=north west][inner sep=0.75pt]    {$\bfv_{5}$};
\draw (149,153.4) node [anchor=north west][inner sep=0.75pt]    {$\bfv_{3}$};
\draw (116,126.4) node [anchor=north west][inner sep=0.75pt]    {$P$};
\draw (184,124.4) node [anchor=north west][inner sep=0.75pt]    {$Q$};

\end{tikzpicture}

    \caption{The local function is chosen to be the linear function from $P$ to $Q$.}
    \label{fig:localLineseg}
\end{figure}

We claim that if we use these local functions and the polynomial $r$-blending function, then the resulting curve is local convex as long as the data points are local convex. The proof is quite straightforward. We already claim that the  $r$-bending function is equivalent to Bernstein polynomials:
\begin{equation}
\begin{cases}
B_1(x)= \displaystyle \sum_{\tiny 0\leq i \leq r} b_{i,2r+1}(x)\\
B_2(x)= \displaystyle \sum_{\tiny r+1\leq i \leq 2r+1} b_{i,2r+1}(x).
\end{cases}
\end{equation}
Back to our example in Figure ~\ref{fig:localLineseg}. We shall show that the line segment from $\bfv_2$ to $\bfv_3$ is local convex. We know that on $[2,3]$, $\bfF_2=\bfv_2(1-t)+P \cdot t$ and $\bfF_3=P\cdot (1-t)+\bfv_3 \cdot t$. Combine with the formula of $r$ blending curve, one can write down the curve $\Gamma(t)|_{[2,3]}= \bfF_2(t) B_1 (t-2)+\bfF_3(t) B_2(t-2)$ by Bezier polynomial:
\begin{align*}
        \Gamma(t)|_{[2,3]}&= \bfv_2 \cdot b_{0,2r+2} + \displaystyle \sum_{\tiny 1\leq i \leq r} (\frac{2r+2-i}{2r+2}\bfv_2+\frac{i}{2r+2}P)b_{i,2r+1}(x)+P \cdot b_{r+1,2r+2}\\ 
&+\displaystyle \sum_{\tiny r+2 \leq i \leq 2r+1} (\frac{2r+2-i}{2r+2}P+\frac{i}{2r+2}\bfv_3)b_{i,2r+1}(x) +\bfv_3 \cdot b_{2r+2,2r+2},
\end{align*}
where $b_{i,j}$ are $i$-th Bezier base polynomial of degree $j$. As we can see, this line segment is a Bezier curve where all control points are located on the $\bfv_2$-$P$ and $P$-$\bfv_3$. The control points form a convex polygon. By variation diminishing property, the curve is a local convex curve.\\
\end{example}

\begin{example}[Sphere Preserving Curves]\label{exp4:sphere}

In this subsection, we present a few examples to show that when the given 
data points on spheres, our smooth interpolating curves will be on the
corresponding spheres. 
In Example ~\ref{sec: exp2 Arc} of Section ~\ref{Local_QRP}, we already explained the local curve of the arc. One of the advantages of using arcs as local curves is that the curve is locally a circle if the dataset itself is intended to behave like a circle, see Figure ~\ref{fig:circleExp}.\\

\begin{figure}
    \centering
    \includegraphics[width=0.8\textwidth]{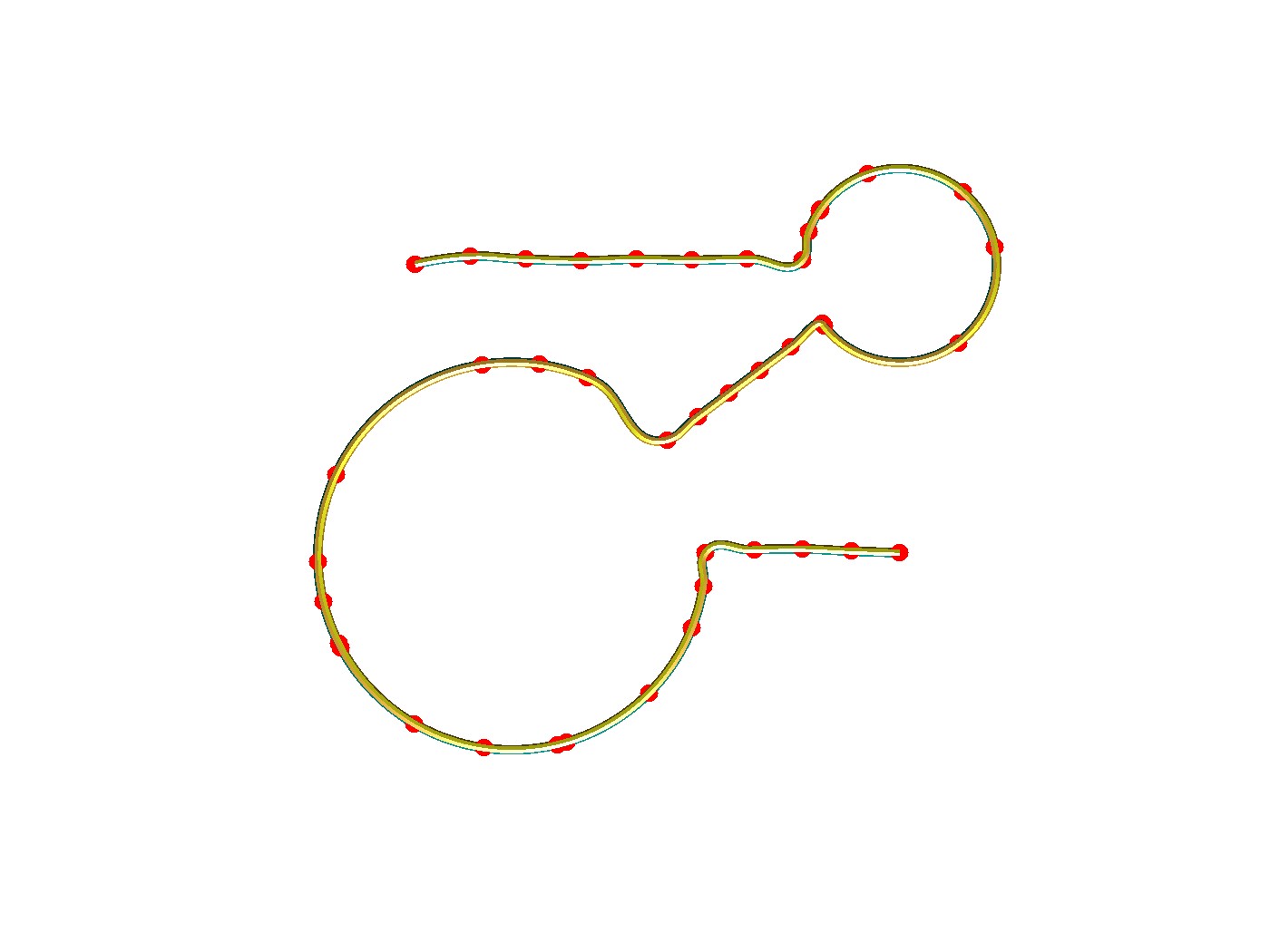}
    \caption{The curve is locally adaptive to circles when the data points are locally scattered on two circles.}
    \label{fig:circleExp}
\end{figure}

In the case of three-dimensional space, one may need to adapt a curve on spherical data. Because we use the circular local function, we can always expect our local curves to be located on the sphere when the data cloud is on the sphere. When we glue the two local curves on a sphere, all we have to do is to use a special gluing function. For instance, when we glue two arcs on a sphere from $P$ to $Q$, see Figure ~\ref{fig:Sphere_Blend}. We can  define $\Phi(s,x)$, where $\Phi(s,0)$ and $\Phi(s,1)$ are two arcs pass $P$ and $Q$ that we are going to glue. We can define $\bff_{x}(s)=\Phi(s,x)$ as a series of arcs on the sphere pass $P$ and $Q$ with uniform arc length so that $\bff_{x}(0)=P$ and $\bff_{x}(1)=Q$ for all $x \in [0,1]$. For the $x$ direction, one can easily find that the center of these arcs will be located on a circle inside the sphere. Therefore, we can parametrize the $x$ direction by the center location of these arcs. More specifically, let $C(x)$ be the center of the arc of $\bff_{x}(s)$ and we choose $|dC(x)/dx|$ to be a constant. In this way, we define the gluing function by $\Phi(s, B_1(s))$, where $B_1$ is the blending function.\\

\begin{figure}
    \centering
    \includegraphics[width=0.8\textwidth]{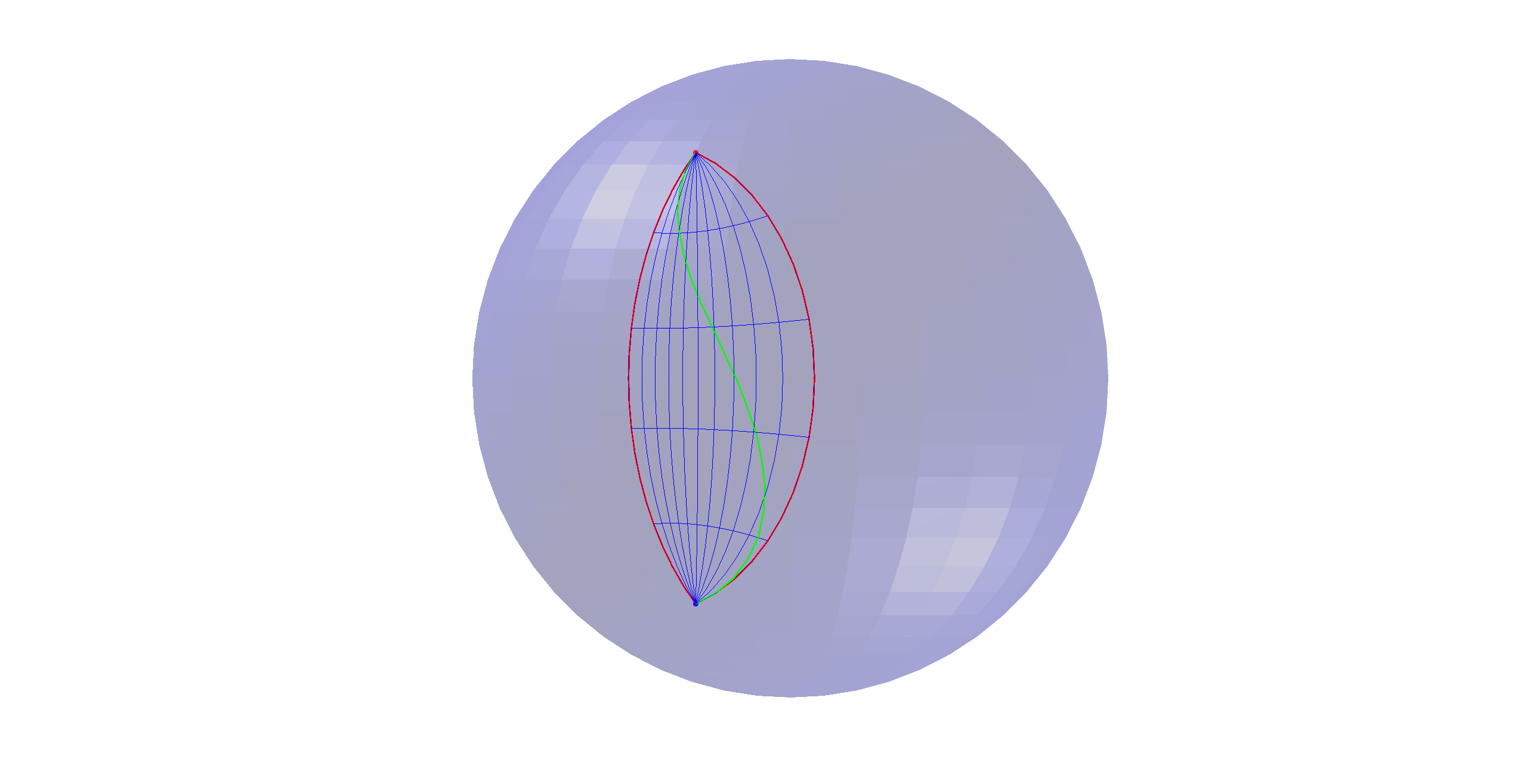}
    \caption{The grid defined by $\phi(x,t)$on the sphere. Two red curves are the arcs to be glued. And the green line is the resulting curve after gluing. }
    \label{fig:Sphere_Blend}
\end{figure}

As shown in Figure ~\ref{sphere}, two curves are presented to demonstrate that our 
interpolating curves will be on the corresponding spheres when the
given data points are on the spheres. We can see that the curve fits the sphere whenever the data points are on the sphere.\\

  \begin{figure}
    \centering
    \begin{tabular}{cc}
    \includegraphics[width=0.5\textwidth]{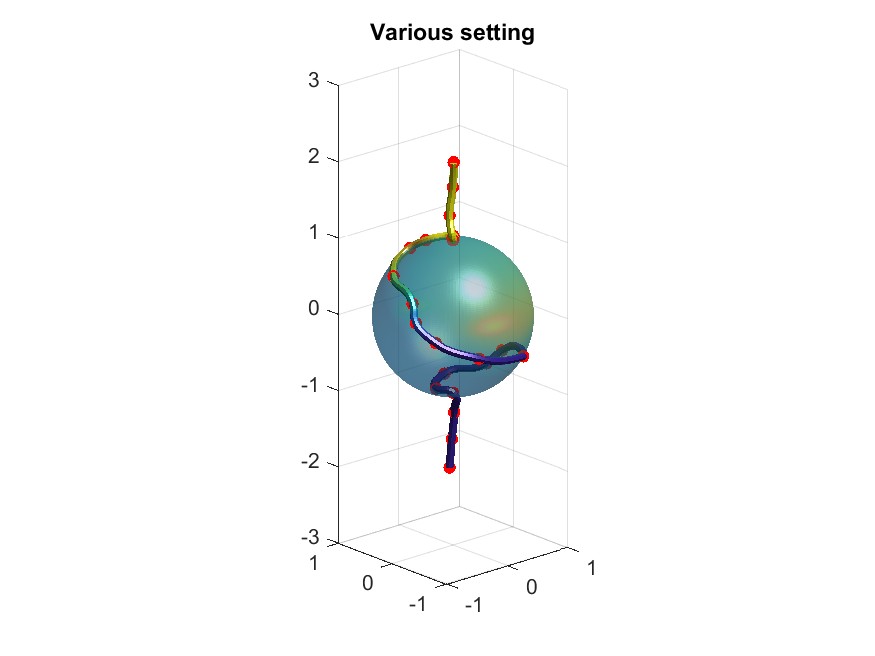} &
    \includegraphics[width=0.5\textwidth]{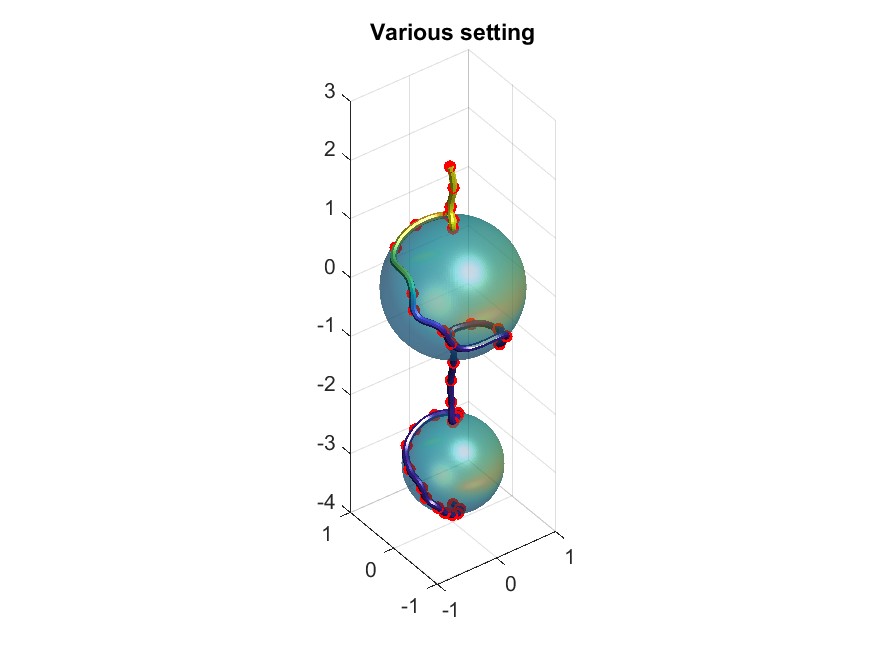}\\
    \end{tabular}
    \caption{Two examples of smooth interpolating curves which show that the curves will be on the sphere according to the given data points}
    \label{sphere}
\end{figure}

In the next example, we have a lot of points on the unit sphere. Our smooth
interpolating curve is also on the same sphere as shown in Figure~\ref{sphereD}.

  \begin{figure}
    \centering
    \includegraphics[width=1\textwidth]{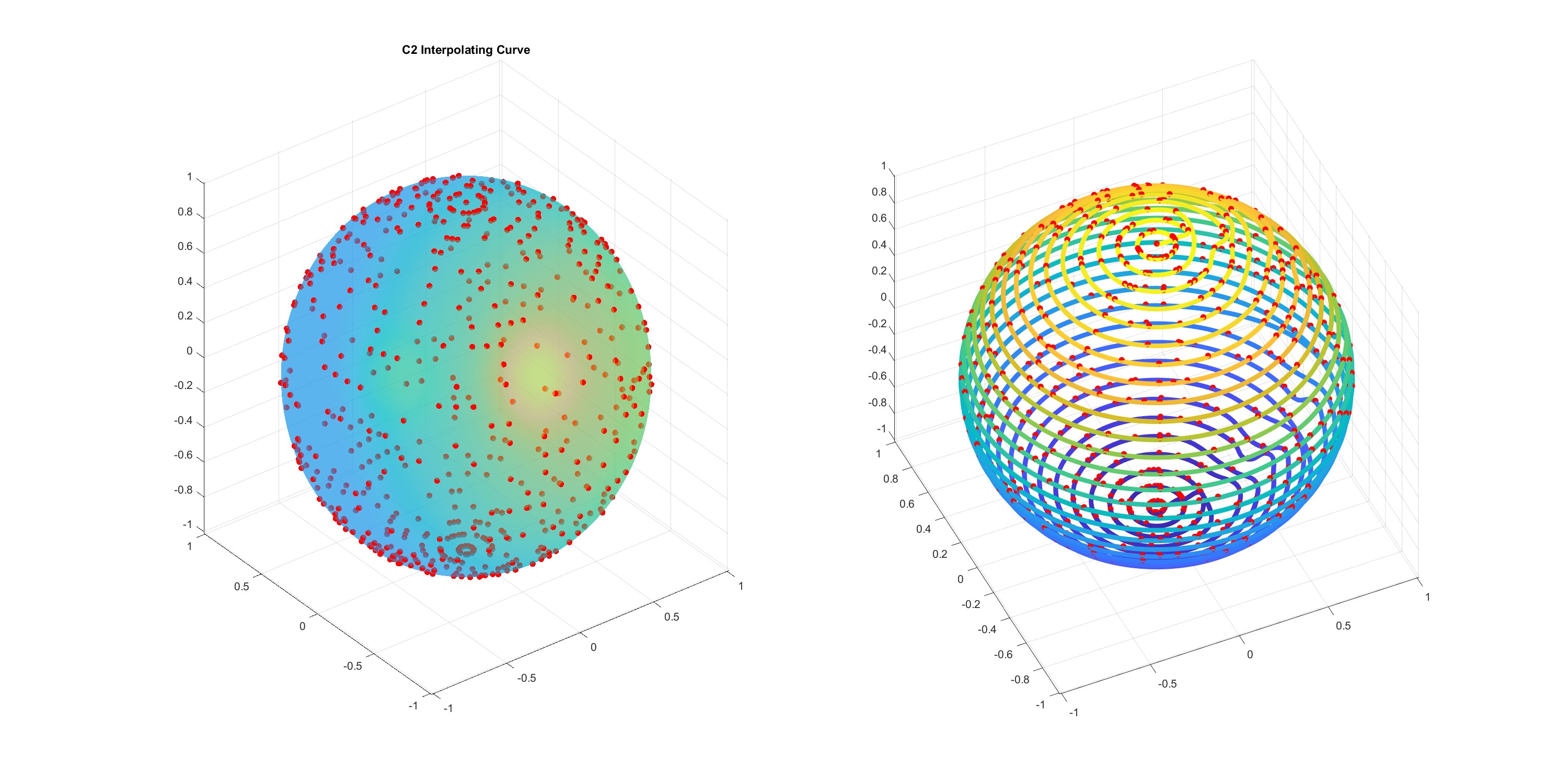}
    \caption{Our Smooth Interpolating Curve is on the Sphere (right) as the Given Data Points on the Sphere(left)}
    \label{sphereD}
\end{figure}  
\end{example}

\section{Conclusion and Future works}
In this paper, we present an organized framework for constructing smoothly interpolating curves in $\mathbb{R}^n$ with a high level of smoothness customization. While incorporating and refining existing approaches, our algorithm breaks down the smooth curve construction into four key components: the local function, blending function, distributing function, and gluing function.\\
The local function depends on the needs of shape design. One could use pure polynomials to simplify the calculation and save the storage, as shown in Example ~\ref{exp1:polyfinal}. In addition to the oriented points, some tangent vectors and normal vectors are
also given, we can modify our algorithm easily by 
using local functions with derivative interpolation  
properties. 
Some useful local functions like circular functions in Example ~\ref{sec: exp2 Arc} and linear functions to preserve the convexity in Example ~\ref{exp3:Convex} are also useful in various scenarios. Note that the local functions are not necessarily the same for all  points. For instance, on the boundary, one may apply different boundary local functions to get the desired curve. In Example ~\ref{exp2:Cornner}, we also show that by replacing the local function on the degenerated points, the curve can have the corner that we expected. In another example ~\ref{exp3:Convex}, we explained a convexity of the data points could be preserved if we use linear functions as local functions. The choice of local function is not limited to the example we have shown in this paper. 

The Blending functions are used to determine the smoothness of the final curves. Applying $r$-blending functions  can give us a $G^r$ curve in most cases. We also saw that we can use the trigonometric function or other functions to replace the polynomial blending functions if they satisfy Definition ~\ref{Def_of_rblending}.\\
The redistribution function maps the coordinate of the local function to a standard coordinate, which has one unit between each node. In this paper, we only use the piecewise linear map as our redistribution function. By Theorem \ref{thm:Main}, we may need our function $\bfF_i$ after redistribution to be positive definite and regular. For the same local function, we could make the redistributed function positive definite by choosing a suitable redistribution function. The only non positive definite $\bfF_i$ in this paper is the circular functions. However, Yuksel has proved that the curve is $G^2$ in his paper ~\cite{Yuksel2020}. So, using the piecewise linear function in this case is fine.\\
When linear gluing may be a good choice, most of the time, we may need a different way to glue the two curves together. In Example ~\ref{exp4:sphere}, we demonstrate a special way to glue two curves together so that the final curve will locate on the same sphere as its original two curves. In the example, the gluing function does not affect the final smoothness because it is $C^\infty$. If one uses other gluing functions, they may lose smoothness if the function is not smooth enough. \\

In this paper, we analyze previous works and decompose them into four parts of the function. Each part take there unique role in the construction of the final curve. We also indicated that the blending functions take an important role of smoothness. By providing  $r$-blending functions, we can now push the smoothness to any degree. We also give a necessary conditions that make the curve to be $G^r$ so when people use there preferred local function in our algorithm, they would expect a smooth curve as long as there formulate meets the condition.

\bmhead{Acknowledgments}
The authors are very grateful to the editor and referees for their helpful comments.

\section*{Declarations}


\begin{itemize}
\item \textbf{Funding.} The second author is supported by the Simons Foundation for collaboration grant \#864439. 
\item \textbf{Competing interests.} On behalf of all authors, the corresponding author states that there is no conflict of interest.
\end{itemize}

\addcontentsline{sn-bibliography}{chapter}{References} 
\bibliography{sn-bibliography}


\end{document}